\documentclass[11pt]{article}

%%%%%%%%%%%%%%%%%%%%%%%%%%%%%%%%%%%%%%%%%%%%%%%%%%%%%%%%%%%%%%%%%%%%%%%%%%%%%%%%%%%%%%%%%%%%%%%%%%%%%%%%%%%%%%%%%%%%%%%%%%%%
%\usepackage[latin1]{inputenc}
\usepackage[pdftex]{graphicx}
\usepackage{amsmath,amssymb}
\usepackage[usenames,dvipsnames,pdftex]{color}
\usepackage{epsfig}
%\usepackage{lineno,hyperref}
%\modulolinenumbers[5]
\usepackage{amssymb,amsmath, amsthm,epsfig,multirow}

\newtheorem{defn}{Definition}[section]
\newtheorem{theorem}{Theorem}[section]
\newtheorem{corollary}[theorem]{Corollary}
\newtheorem{remark}{Remark}

\begin{document}
\makeatletter{\renewcommand*{\@makefnmark}{}
%\footnotemark[\value{$\bigstar$}]
\title{\bf On a regularization-correction approach for the approximation of piecewise smooth functions}

\footnotetext{This work was funded by project 20928/PI/18 (Proyecto financiado por la Comunidad Aut\'onoma de la Regi\'on de Murcia a trav\'es de la convocatoria de Ayudas a proyectos para el desarrollo de investigaci\'on cient\'ifica y t\'ecnica por grupos competitivos, incluida en el Programa Regional de Fomento de la Investigaci\'on Cient\'ifica y T\'ecnica (Plan de Actuaci\'on 2018) de la Fundaci\'on S\'eneca-Agencia de Ciencia y Tecnolog\'ia de la Regi\'on de Murcia) and by the Spanish national research project PID2019-108336GB-I00.}
%\footnotetext{{\bf Data availability statement:} All data generated or analysed during this study are included in this published article.}

\date{}
\author{Sergio Amat,
\thanks{
 Departamento de Matem\'atica Aplicada y Estad\'{\i}stica.
   Universidad  Polit\'ecnica de Cartagena (Spain).
e-mail:{\tt sergio.amat@upct.es}}\and
David Levin,
\thanks{
School of Mathematical Sciences. Tel-Aviv University, Tel-Aviv (Israel).
 e-mail:{\tt levindd@gmail.com}
}
\and Juan Ruiz \thanks{ Departamento de Matem\'atica Aplicada y Estad\'{\i}stica.
   Universidad  Polit\'ecnica de Cartagena (Spain).
e-mail:{\tt juan.ruiz@upct.es}}
}
\maketitle

\begin{abstract}
Linear approximation approaches suffer from Gibbs oscillations when approximating functions with singularities. ENO-SR resolution is a local approach avoiding oscillations and with a full order of accuracy, but a loss of regularity
of the approximant appears.
The goal of this paper is to introduce a new approach having both properties of full accuracy and regularity. In order to obtain it, we propose a three-stage algorithm: first, the data is smoothed by subtracting an appropriate non-smooth data sequence; then a chosen high order linear approximation operator is applied to the smoothed data and finally, an approximation with the proper singularity structure is reinstated by correcting the smooth approximation with the non-smooth element used in the first stage.
We apply this approach to both cases of point-value data and of cell-average data, using the 4-point subdivision algorithm in the second stage. Using the proposed approach we are able to construct approximations with high precision, with high piecewise regularity, and without diffusion nor oscillations in the presence of discontinuities.
\end{abstract}

{\bf Key Words.} Non-smooth approximation, local reconstruction, regulari\-ty, Gibbs phenomenon, diffusion.

\section{Introduction}
\iffalse
Subdivision schemes are very well known mathematical tools for the ge\-neration of curves and surfaces. In the past years many articles have been published about linear subdivision schemes. An incomplete list of references is, for example, \cite{ BCR, dynlevin, HIDS02, HD, Jeon, KBG,  SY, SD, Wang, Zheng}. Non linear subdivision schemes have also attracted much attention. See for example, \cite{AKJ, dioni, AL, CDM, FM, GMS, HO, Os} and the references therein for an incomplete list of publications about the subject.

Several properties of subdivision schemes are desirable for
practical a\-pplications: convergence, regularity of the limit function, stability, order of approximation and adaptation to singularities. The last property
u\-sually requires the implementation of nonlinear strategies, resulting in the so called nonlinear subdivision schemes. These schemes present a regularity of the limit functions that is smaller than the one obtained by linear schemes. In practice it is usua\-lly desired that the schemes have at least $C^2$ regularity, as this kind of schemes are associated to improved results in the aerodynamics or hydrodynamics of the generated surface.
When the data is computed with high accuracy, interpolatory subdivision schemes are preferred over approximating schemes, since using the former, the original data is preserved. These schemes
are usually based on Lagrange interpolation using a centered stencil.
\fi

The ENO (Essential Non-Oscillatory) reconstruction is an approximation method for non-smooth data \cite{HARTEN-original}. It uses a local linear approximation on a selection of the stencils that aims to avoid zones affected by singularities. Thus, the precision is only reduced at the intervals which contain a singularity. The ENO subcell resolution (ENO-SR) technique improves the approximation properties of the reconstruction
even at intervals that contain a singularity. In \cite{ACDD}, the authors proposed a rigorous analysis of the ENO-SR procedure both for point-value data and for cell-average sampling.  For a piecewise $C^2$ function $f$ with a jump $[f']$ of the first derivative at $x^*$, the authors proved that the singularity is
always detected for a grid-spacing discretization $h$ smaller than the critical scale $h_c$ being,
\begin{equation}\label{hc2}
h_c :=\frac{|[f^{'}]|}{4\sup_{t\in\mathbb{R}\backslash\{x^*\}} |f^{''}(t)|}.
\end{equation}
This critical scale represents the minimal level of resolution needed to
distinguish between the singularity and a smooth region.

The detection algorithms in \cite{ACDD} can detect corner and jump discontinuities. We note that only corner singularities can be located using the
point-values discretization, as the location of jumps in the function is lost during the discretization process. Jumps in the function can be located if we use other kinds of discretization that accumulate information over the whole interval of discretization instead of sampling information at isolated locations. One example is the cell-average discretization, that we introduce later on in Section \ref{discretizacion}. In \cite{ACDD}, the authors also perform an analysis for jump discontinui\-ties and data discretized by cell-averages, obtaining the critical scale,
\begin{equation}\label{hc}
h_c :=\frac{|[f]|}{4\sup_{t\in\mathbb{R}\backslash\{x^*\}} |f^{'}(t)|},
\end{equation}
where $[f]$ is the jump in the function $f$.

The algorithm in \cite{ACDD} for the point-values sampling is based on the comparison of second-order differences for the detection of intervals that are potential candidates of containing a singularity.
Following \cite{ACDD}, we can assume that given a continuous piecewise functions there exists a critical scale $h_c$ depending only on derivatives of $f$ at smooth regions such that all the intervals with a singularity are detected.
In the case of a corner singularity, once the singularity interval has been detected, we build polynomials to the left and to the right of the suspicious
interval. Then, we solve for the root of the polynomial $H$ which is the difference between the two polynomials. As proved in
\cite{ACDD}, for a small enough grid-spacing, there is a unique root of this function inside the suspicious interval. Thus, a good approximation for the position of the discontinuity can be easily obtained by finding the roots of $H$. The interested reader can refer to \cite{Arandiga:2003:AII:641932.641950} for a nice discussion about the process. The accuracy of the results depends on the order of the polynomials, as the authors prove in Lemma 3, statement 3 of \cite{ACDD}. In the cell-average setting, jumps in the function transform into corner singularities in the primitive and their position can be easily obtained following the process described above.

\iffalse
{ In this paper we include one step more for the location algorithm when a $BB$ cell is detected. In this case, once we have located the discontinuity using the process described, we know which of the two contiguous $B$ cells contains the discontinuity. Then we treat this cell as $B$ cell and repeat the process of location in order to assure the best possible accuracy in the location of the singularity. }
\fi
%====
On the other hand, in \cite{BL}, a specific prediction operator in the interpolatory
framework was proposed and analyzed. It is oriented towards the representation
of piecewise smooth continuous functions. Given a family of singularity points, a 4-point linear position-dependent prediction
operator is defined. The convergence of this non-uniform subdivision process is esta\-blished using matrix formalism.
The algorithm leads to the successful control of the classical
Gibbs phenomenon associated to the approximation of locally discontinuous functions. This position-dependent algorithm is equivalent to the
ENO-SR subdivision scheme assuming the previous knowledge of the singularity positions, in particular improving the accuracy of the linear a\-pproach.

Despite their good accuracy properties, the theoretical regularity of ENO and ENO-SR schemes in \cite{BL} is $C^{1-}$ in the point-values, which is smaller than the regularity of the 4-point linear subdivision scheme, that is $C^{2-}$, as we can see in \cite{CDM}. We emphasize that $C^2$ regularity is important in aerodynamics and hydrodynamics applications which motivated the ENO algorithm from its very outset. Other adaptive interpolation schemes like the PPH algorithm \cite{PPH} manage to eliminate the Gibbs phenomenon close to the discontinuities, but introducing diffusion and, thus, reducing the order of accuracy near discontinuities but also the regularity (that is $C^{1-}$ numerically) close to the discontinuities.

The goal of this paper is to develop a regularization-correction approach (RC) to the problem. In the first stage, the data is smoothed by subtracting an appropriate non-smooth data sequence. Then a uniform linear 4-point subdivision approximation operator is applied to the smoothed data. Finally, an approximation with the proper singularity structure is restored by correcting the smooth approximation with the non-smooth element used in the first stage.
Indeed, we prove that the suggested RC procedure produces approximations for functions with discontinuities which have the following five important properties:

1. Interpolation.

2. High precision.

3. High piecewise regularity.

4. No diffusion.

5. No oscillations

As far as we know, this is the first time that a procedure that owns all these properties at the same time appears in the literature.
We use the particular case of the 4-point Dubuc-Deslauriers interpolatory subdivision scheme \cite{DD} through which we obtain a $C^{2-}$ piecewise regular limit function and that is capable of reproducing piecewise cubic polynomials.
Indeed, it is straightforward to obtain higher regularity and reproduction of piecewise polynomials of a higher degree, just using the same technique with larger stencils.

The paper is organized as follows: in Section \ref{discretizacion} we introduce the discretizations that is being used in the paper, Section \ref{nuevo} is dedicated to introducing our RC approximation approach, and the analysis of the approximation order of the resulting approximants. In Section \ref{numexp_point} we present some experiments for univariate and bivariate functions discretized by point-values sampling, in Section \ref{numexp_cell} we show some experiments for univariate and bivariate functions discretized by cell-averages sampling and, finally, in Section  \ref{conclusion} we expose the conclusions.

\section{The discretizations: point-values and cell-a\-verages}\label{discretizacion}

In the case of a point-values discretization we can just consider that we are given a vector of values on a grid. This means that our data is interpreted as the sampling of a function at grid-points $\{x_j=jh\}$. In this case, as it has been mentioned above, the position of the discontinuities in the function is lost during the discretization process and there is no hope to recover their exact position. Other kind of singularities, such as discontinuities in the first derivative, can be located using the point-values discretization. Another option is to consider the original data as averages of a certain piecewise continuous function over the intervals of a grid. This is the {cell-average setting}, which also allows to locate jump discontinuities in the function. In both cases we consider the grid points  in $[0, 1]$:
$$X=\{ x_{j} \}_{j=0}^{N},\quad x_{j}=jh, \quad h=\frac{1}{N}.$$
For the point-values case we use the discretization $\{f_j=f(x_j)\}_{j=0}^{N}$ at the data points $\{x_j= jh\}_{j=0}^N$.

On the other hand, for the cell-averages sampling we are given the local averages values,
\begin{equation}\label{discretization}
%{\cal D}_k:L^1([0, 1]) \to
%V^k, \qquad
 \bar{f}_j=\frac{1}{h}\int^{x_j}_{x_{j-1}}f(x)dx,\quad j=1, \cdots, N.
\end{equation}
Also in this case we aim at approximating the underlying function $f$.

Let us define the sequence $\{F_j\}$ as,
\begin{equation}\label{primitive}
F_j=h\sum_{i=1}^{j}\bar{f}_i=\int_{0}^{x_j}f(y)dy,\quad j=1, \cdots, N,
\end{equation}
taking $F_0=0$. Denoting by $F$ the primitive function of $f$, i.e. $F(x)=\int_0^xf(y)dy$, the values $\{F_j\}$ are the point-values discretization of $F$. Now we are back in the case of point-value data, for $F$, and after finding an approximation $G(x)$ to $F(x)$, $g(x)=G'(x)$ would be the approximation to $f(x)=F'(x)$, such that,
\begin{equation}\label{diff}
\bar{g}_j=\frac{F_j-F_{j-1}}{h}.
\end{equation}

%If ${ I}(x,F^k)$ is a reconstruction operator in the primitive that uses interpolation, it is easy to check that it is consistent, see \cite{AD99}.  We can define the reconstruction operator in the cell-averages using the reconstruction operator in the primitives, that satisfies the consistency requirement
%\begin{equation}\label{der_prim}
%{\cal R}(x,\bar{f}^k)=\frac{d}{dx}{ I}(x,F^k)\longrightarrow \left({\cal D}_k\left({\cal R}_k(x,\bar{f}^k)\right)\right)(x_i)=\frac{1}{h_k}\int_{x_{i-1}^k}^{x_i^k}\frac{d}{dx}{ I}(x,F^k)=\frac{F_{i}^k-F_{i-1}^k}{h_k}=\bar{f}_i^k.
%\end{equation}
%This operator satisfies the consistency requirement,
%
%$$\left({\cal D}_k\left({\cal R}_k(x,\bar{f}^k)\right)\right)(x_i)=\frac{1}{h_k}\int_{x_{i-1}^k}^{x_i^k}\frac{d}{dx}{ I}(x,F^k)=\frac{F_{i}^k-F_{i-1}^k}{h_k}=\bar{f}_i^k.$$
%Now, considering only the prediction errors at (for example) the odd
%cells of the grid $X^k$, we immediately obtain a one-to-one
%correspondence:
%\begin{eqnarray}\label{decimacion}
%\bar{f}^{k-1}_j&=&\frac{\bar{f}_{2j}^k+\bar{f}_{2j-1}^k}{2}, \quad
%d_{j}^{k}=\bar{f}_{2j-1}^k-(P_{k-1}^k\bar{f}^{k-1})_{2j-1},\\\label{detail_identidad}
%\bar{f}_{2j-1}^k&=&(P_{k-1}^k\bar{f}^{k-1})_{2j-1}+d_{j}^{k}, \quad
%\bar{f}^k_{2j}=2\bar{f}^{k-1}_j-\bar{f}_{2j-1}^k.
%\end{eqnarray}
%
%In our case, the prediction operator will be Newton polynomials in the primitive plus correction terms close to the discontinuities. In \cite{AL, PPH_franklin, Amat:2010:CLN:1767486.1767591, sema, Amat:2012:AII:2228625.2228784, ADL, ADLT, ECnuestro} other prediction operators can be found.

\section{The Regularization-Correction (RC) algorithm for non-smooth data}\label{nuevo}

We present our approach for the approximation of a function with one singular point. Later on we explain how to use it for the case of several singular points.

Let $f$ be a piecewise $C^4$-smooth function on $[0, 1]$,
with a singular point at $x^*$, and assume we are given the
vector of values $\{f(x_i)\}_{i=0}^{N}$ at the data points
$\{x_i = ih\}_{i=0}^{N}$, $N=1+1/h$. We denote by $f^-(x)$ and $f^+(x)$ the functions to the left and to the right of the discontinuity respectively.

In our framework we are going to use the 4-point Dubuc-Deslauriers subdivision scheme:
\begin{equation}\label{lineal}
\left\{\begin{array}{l}
(S f^{k})_{2j}= f_j^k,\\
(S f^{k})_{2j+1}= -\frac{1}{16} f_{j-1}^k + \frac{9}{16} f_j^k + \frac{9}{16} f_{j+1}^k- \frac{1}{16} f_{j+2}^k.
\end{array}\right.
\end{equation}
This scheme has fourth order of accuracy for $C^4$ functions, and it has
a $C^{2^-}$ regularity \cite{DD}, \cite{DLG}.
We aim at retrieving these properties for functions with jump singularities in  the first derivative and the function for data discretized by point-values or by cell-averages.

In the following sections we suggest the framework of a regularization-correction algorithm that is adapted to the discontinuities, that avoids the Gibbs phenomenon, that attains the same regularity at smooth zones, as the equivalent linear algorithm, without diffusion and that reproduces polynomials of the degree associated to the linear scheme used.

\subsection{The RC approximation algorithm for point-values data}\label{algorithm1}
In this subsection we introduce the RC approximation algorithm. The main idea is to use the given discrete data of $f$ in order to find an explicit function $q(x)$ such that $g(x)=f(x)-q(x)$ is a smooth function. Then we can apply to the data
$\{g_j=g(x_j)\}_{j=0}^{N}$ any standard approximation procedure with high smoothness and high approximation order. For example, we can use the above  4-point subdivision algorithm and we denote the resulting approximation by $g^\infty$. The last step is the correction step in which we define the approximation to $f$ as $g^\infty+q$.

Let $T_3^-(x)$ and $T_3^+(x)$ be the third order Taylor approximations of $f^-$ and of $f^+$ at $x^*$ respectively. Consider the one sided cubic polynomial
\begin{equation}\label{T}
\left\{\begin{array}{l}
T_+(x)=0,\ \ \ \ \ \ \ \ \ \ \ \ \ \ \ \ \ \ \ \ \ \ x<x^*,\\
T_+(x)=T_3^+(x)-T_3^-(x),\ \ \  x\ge x^*.
\end{array}\right.
\end{equation}
For $x \geq x^*$
\begin{equation}\label{T1}
T_+(x) = [f]+[f^{'}] (x-x^*)+\frac{1}{2}[f^{''}](x-x^*)^2+\frac{1}{6}[f^{'''}](x-x^*)^3,
\end{equation}
where $[f],\ [f^{'}],\ [f^{''}],\ [f^{'''}]$ denote the jumps in the
derivatives of $f$ at $x^*$: $[f]=f^+(x^*)-f^-(x^*), [f^{'}]=(f^+)'(x^*)-(f^-)'(x^*), [f^{''}]=(f^+)''(x^*)-(f^-)''(x^*)$ and $[f^{'''}]=(f^+)'''(x^*)-(f^-)'''(x^*).$

It follows that $$g\equiv f-T_+\in C^3[0,1],$$ and this observation is the basis for our proposed algorithm. Since we do not know the exact left and right derivatives of $f$ at $x^*$, we will use instead a fourth order approximation of $T_+(x)$:
For $x \geq x^*$,
\begin{equation}\label{tildeT}
\widetilde{T}_+(x) = \widetilde{[f]}+\widetilde{[f^{'}]}(x-x^*)+\frac{1}{2}\widetilde{[f^{''}]}(x-x^*)^2+\frac{1}{6}\widetilde{[f^{'''}]}(x-x^*)^3,
\end{equation}
where
\begin{equation}\label{order4}
\left(\begin{array}{l}
\widetilde{[f]}\\
\widetilde{[f']}\\
\widetilde{[f'']}\\
\widetilde{[f''']}
\end{array}\right)=
\left(\begin{array}{l}
[f]\\
\left[f'\right]\\
\left[f''\right]\\
\left[f'''\right]
\end{array}\right)+
\left(\begin{array}{l}
O(h^4)\\
O(h^3)\\
O(h^2)\\
O(h)
\end{array}\right).
\end{equation}

In section \ref{calc_disc}, we will propose a procedure to approximate these jumps.

Comparing equations (\ref{T1}) and (\ref{tildeT}) and using (\ref{order4}) it follows that near $x=x^*$, i.e., if $|x-x^*|=O(h)$,
\begin{equation}\label{apprT}
|T_+(x)-\tilde{T}_+(x)|=O(h^4),\ \ \ as\  h\to 0.
\end{equation}

\vspace{0.5cm}
Our algorithm has three steps:
\vspace{0.5cm}
\begin{itemize}
\item Step {\bf 1}: Smoothing the data.

We compute the new data using $\tilde{T}_+(x)$ in (\ref{tildeT}),
\begin{equation}\label{corr}
\tilde{g}(x_i) = f(x_i) -\tilde{T}_+(x_i),\ \ i=0,...,N.
\end{equation}
It is clear that $g(x)$ does not present singularities up to the third derivative. As we show below, using a fourth order approximation $\tilde{g}$ to $g$, implies a truncation error that is $O(h^4)$.
\item Step {\bf 2}: Subdivision.

We apply the 4-point interpolatory subdivision scheme to the
data $\tilde{g}(x_j)$ and we denote the limit function by $\tilde{g}^\infty$.

\item Step {\bf 3}: Correcting the approximation.

\begin{equation}\label{T2}
\tilde{f}^\infty (x) = \tilde{g}^\infty (x) + \widetilde{T}_+(x).
\end{equation}
\end{itemize}

%\iffalse
In Figure \ref{ejemplo_corr} we present an example with one of the functions that we use in the numerical experiments (more specifically, the one in (\ref{exp1})). The circles represent a function discretized by point-values with a discontinuity in the first derivative. The dots represent the smoothed data in (\ref{corr}), and we can see that it owns better regularity properties than the original data. Using the strategy presented in this section, we apply a subdivision scheme to the corrected data and then we compute the corrected approximation in (\ref{T2}).
\begin{figure}[!ht]
%programa: /Users/juan/Documents/articulos_a_medias/Levin/graficas_bn_principio_articulo.m
\centerline{\psfig{figure=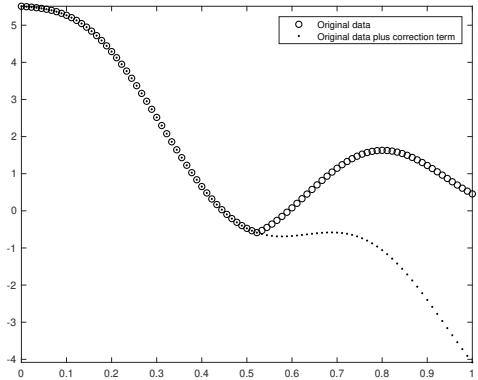,height=5cm}}
\caption{The circles in this graphs correspond to the original data obtained from the discretization of the function in (\ref{exp1}). The dots correspond to the corrected data in (\ref{corr}).}\label{ejemplo_corr}
\end{figure}

\begin{remark}\label{2omas}
It is important to note here that the algorithm can be applied to functions with $m>1$ discontinuities. In this case the correction can be applied iteratively from left to right as the discontinuities are found. The total correction term would be,
$$\tilde{T}_+^{\textrm{total}}(x)=\sum_{n=1}^{m}\tilde{T}_+^{[n]}(x),$$
being
\begin{equation}
\widetilde{T}_+^{[n]}(x) = \widetilde{[f]_n}+\widetilde{[f^{'}]}_n(x-x^*_n)+\frac{1}{2}\widetilde{[f^{''}]}_n(x-x^*_n)^2+\frac{1}{6}\widetilde{[f^{'''}]}_n(x-x^*_n)^3,
\end{equation}
the correction term (\ref{tildeT}) at each of the $m$ discontinuities placed at $x^*_n, n=1, \cdots, m$, found in the data. In this case, $\tilde{T}_+^{\textrm{total}}(x)$ should be used instead on $\tilde{T}_+(x)$ in steps 1 and 3 of the algorithm.
\end{remark}

\subsection{The RC approximation algorithm for cell-averages data}\label{algorithm2}

In order to work with data discretized by cell-averages, $\{\bar{f}_j\}$, we firstly obtain the point-values $\{F_j\}$ of the primitive function $F$ (\ref{primitive}), as in the original Harten's framework \cite{HARTEN-original}. Then we apply our strategy for the point-values discretization described above to obtain an approximation $G\sim F$ and then approximate $f$ by $G'$. The main difference from the case of point-values data is that the function $F$ is a continuous function, with a jump in its first derivative. Thus, an additional step in the algorithm for cell-averages data is to define $x^*$ as the intersection point of the two local polynomials derived from the data $\{F_j\}$. In Figure \ref{paso_primitiva} we sketch the overall process.

%\iffalse
\begin{figure}[!ht]
%programa: /Users/juan/Documents/articulos_a_medias/Levin/regularidad_articulo_calculando_disc_y_saltos_cell.m
\centerline{\psfig{figure=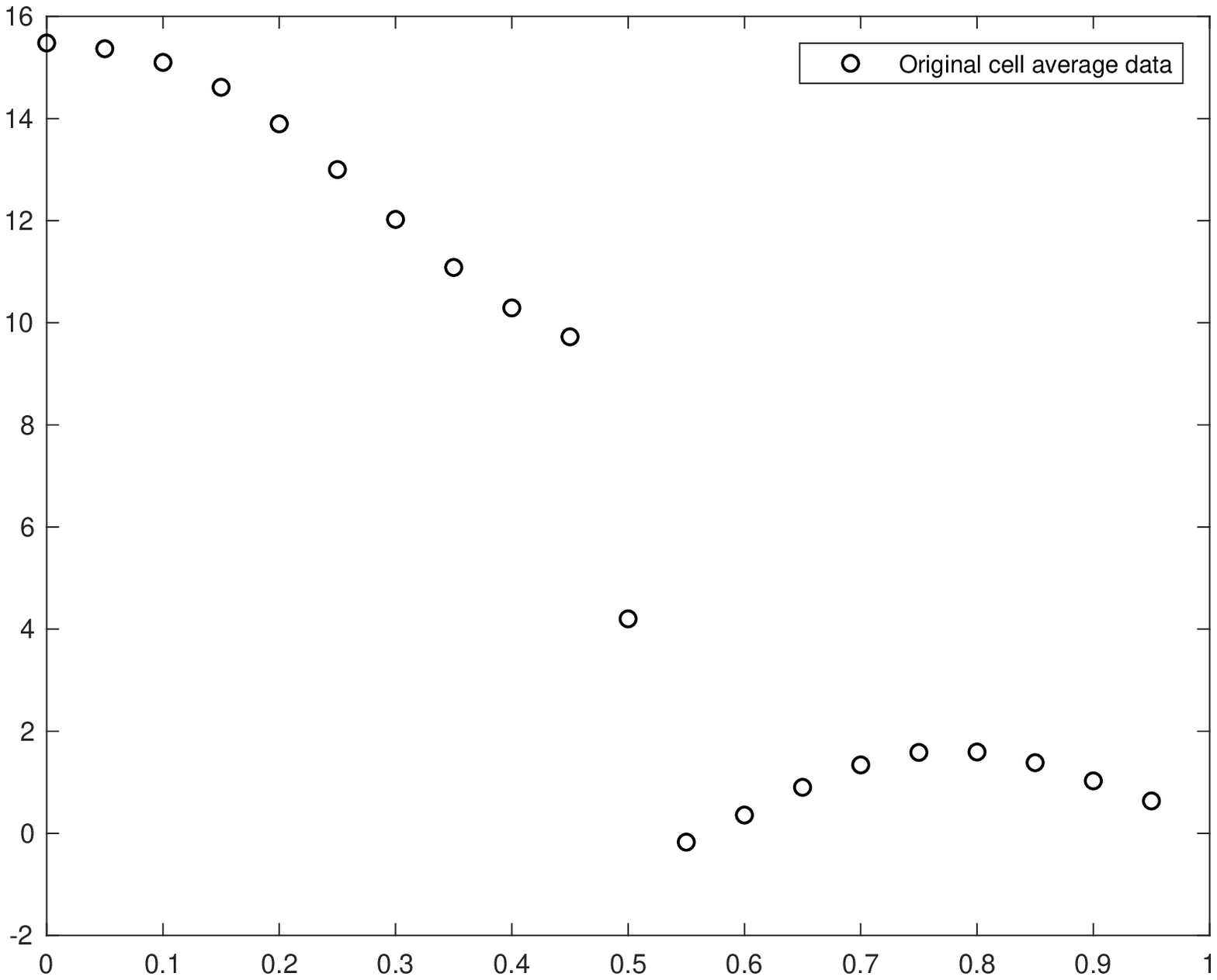,height=4cm}\\
\psfig{figure=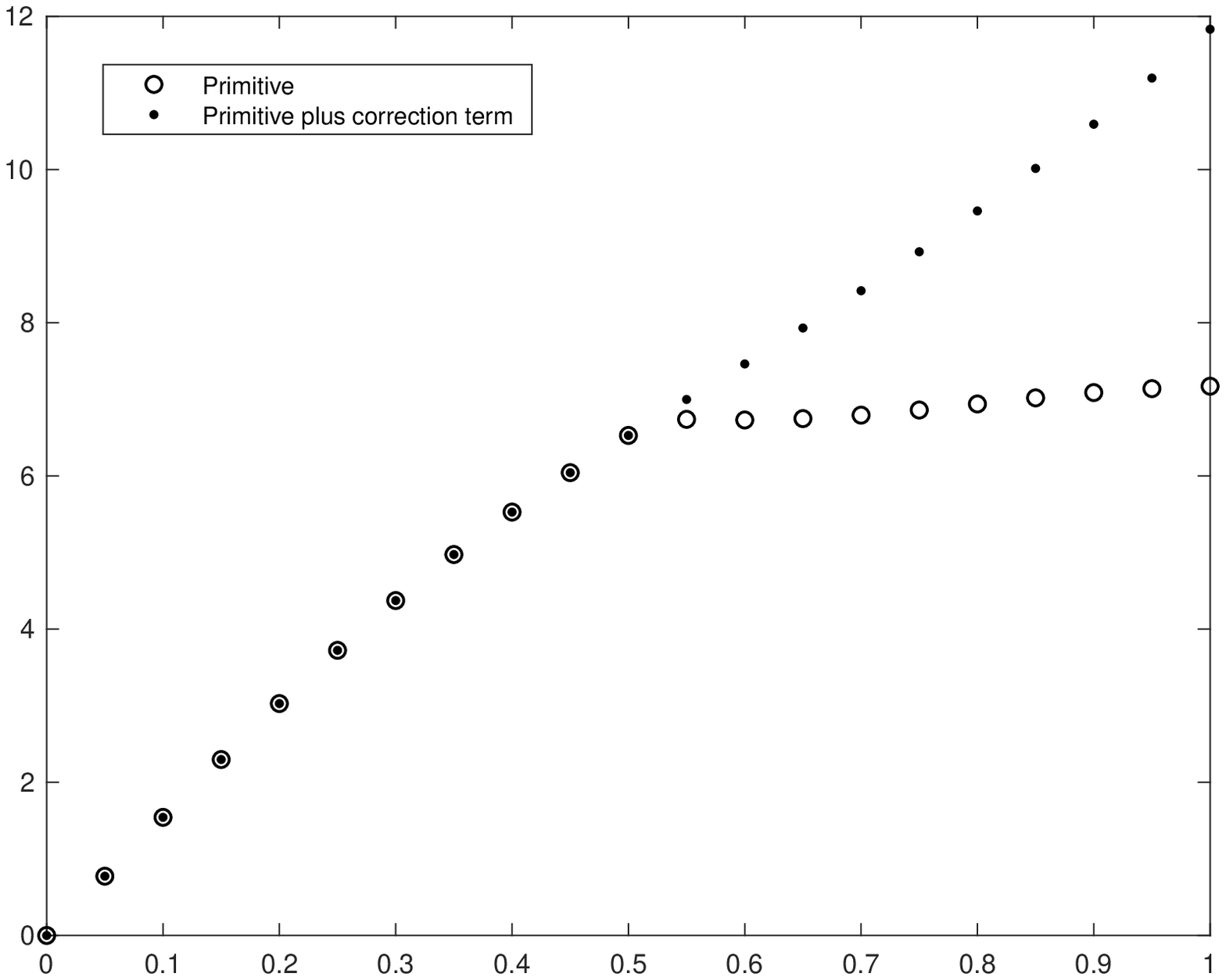,height=4cm}\\
\psfig{figure=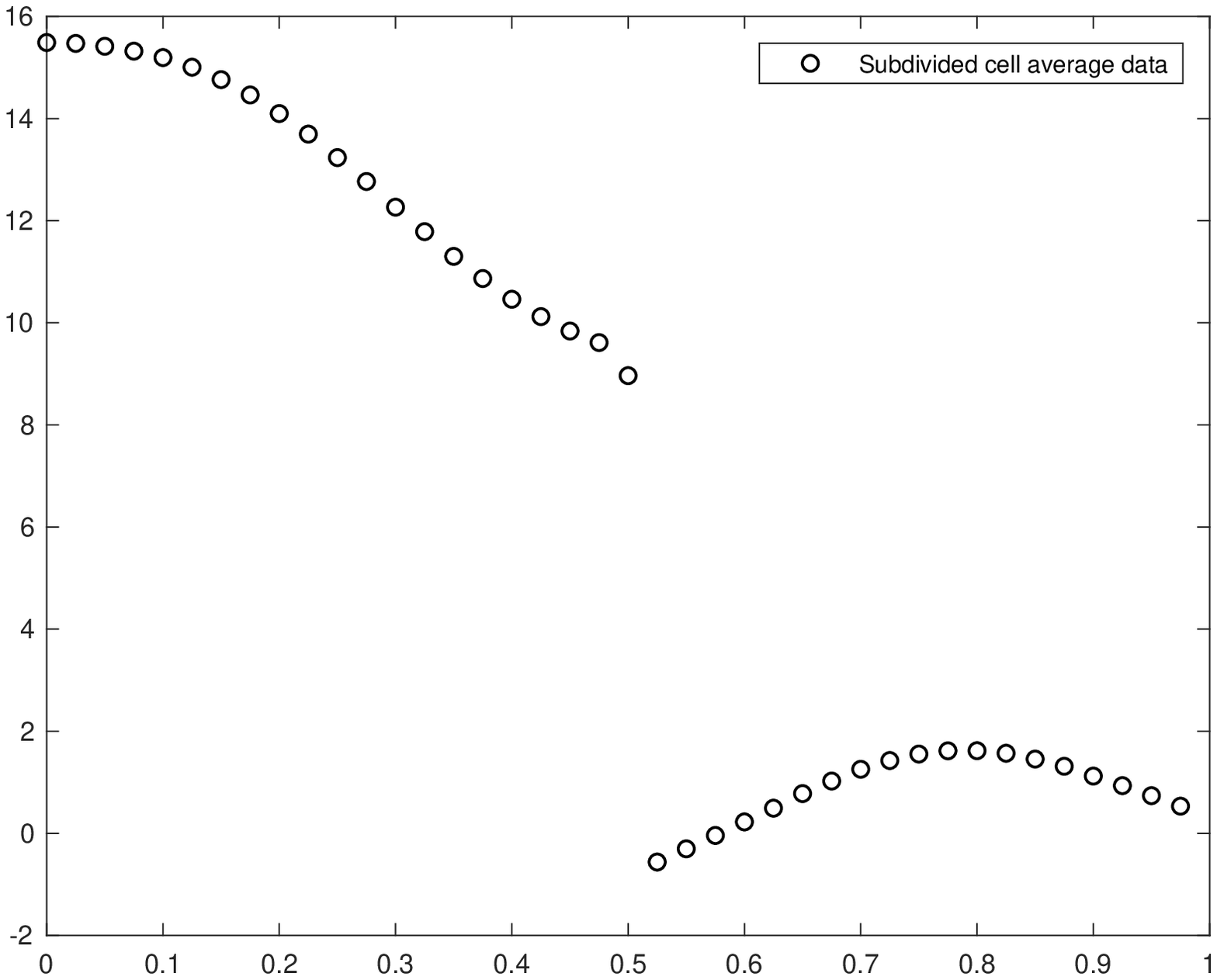,height=4
cm}}
\caption{Left, cell-averages of the function presented in (\ref{exp3}). Center, primi\-tive of the data in the plot to the left, and the corrected primitive. Right, result of the RC approximation to the function discretized by cell-averages.}\label{paso_primitiva}
\end{figure}
%\fi

In the next section we present the construction of a fourth order accurate approximation of the jumps in the function and its derivatives for data discretized by point-values.

\subsection{Fourth-order approximation of the jumps in the function and its derivatives in the point-values framework}\label{calc_disc}%\hfill

\medskip
The approximation of the jumps with the desired accuracy can be obtained using the strategy derived in \cite{ALR}.
In order to obtain the approximate jumps in the derivatives of $f$ we need to know the position of the discontinuity $x^*$ up to certain accuracy. In the introduction we mentioned how to obtain an approximation of the position of the discontinuity $x^*$ with the desired accuracy following \cite{ACDD}. If we work with stencils of four points, we need $O(h^4)$ accuracy in the detection. Let us suppose that we know that the discontinuity is placed at a distance $\alpha$ from $x_j$ in the interval $\{x_j, x_{j+1}\}$. If we want $O(h^4)$ accu\-racy for the approximation of the jump relations, we need to use four points to each side of the discontinuity. Let's suppose that we use the data $\{f_{j-3}, f_{j-2}, f_{j-1}, f_{j}, f_{j+1}, f_{j+2}, f_{j+3}, f_{j+4}\}$ placed at the positions $\{x_{j-3}, x_{j-2}, x_{j-1}, x_{j}, x_{j+1}, x_{j+2}, x_{j+3}, x_{j+4}\}$. Then, we can appro\-ximate the values of the derivatives of $f$ from both sides of the discontinuity just using third order Taylor expansion around $x^*$ and setting the following system of equations:
For the left hand side,
\begin{equation}\label{sys1}
\begin{aligned}%\nonumber
f_j&=f^{-}(x^*)-f^{-}_x(x^*) \alpha +\frac{1}{2} f^{-}_{xx}(x^*) \alpha^2-\frac{1}{3!} f^{-}_{xxx}(x^*) \alpha^3,\\
f_{j-1}&=f^{-}(x^*)-f^{-}_x(x^*) (h+\alpha) +\frac{1}{2} f^{-}_{xx}(x^*) (h+\alpha)^2-\frac{1}{3!} f^{-}_{xxx}(x^*) (h+\alpha)^3,\\
f_{j-2}&=f^{-}(x^*)-f^{-}_x(x^*) (2h+\alpha) +\frac{1}{2} f^{-}_{xx}(x^*) (2h+\alpha)^2-\frac{1}{3!} f^{-}_{xxx}(x^*) (2h+\alpha)^3,\\
f_{j-3}&=f^{-}(x^*)-f^{-}_x(x^*) (3h+\alpha) +\frac{1}{2} f^{-}_{xx}(x^*) (3h+\alpha)^2-\frac{1}{3!} f^{-}_{xxx}(x^*) (3h+\alpha)^3,
\end{aligned}
\end{equation}
and for the right hand side,
%\begin{eqnarray*}\nonumber
\begin{equation}\label{sys2}
\begin{aligned}%\nonumber
f_{j+1}&=f^{+}(x^*)+f^{+}_x(x^*) (h-\alpha) +\frac{1}{2} f^{+}_{xx}(x^*) (h-\alpha)^2+\frac{1}{3!} f^{+}_{xxx}(x^*) (h-\alpha)^3,\\
f_{j+2}&=f^{+}(x^*)+f^{+}_x(x^*) (2h-\alpha) +\frac{1}{2} f^{+}_{xx}(x^*) (2h-\alpha)^2+\frac{1}{3!} f^{+}_{xxx}(x^*) (2h-\alpha)^3,\\
f_{j+3}&=f^{+}(x^*)+f^{-}_x(x^*) (3h-\alpha) +\frac{1}{2} f^{+}_{xx}(x^*) (3h-\alpha)^2+\frac{1}{3!} f^{+}_{xxx}(x^*) (3h-\alpha)^3,\\
f_{j+4}&=f^{+}(x^*)+f^{+}_x(x^*) (4h-\alpha) +\frac{1}{2} f^{+}_{xx}(x^*) (4h-\alpha)^2+\frac{1}{3!} f^{+}_{xxx}(x^*) (4h-\alpha)^3,
%\end{eqnarray*}
\end{aligned}
\end{equation}
where $\alpha=x^*-x_{j}$. It is clear that the previous systems can be written in matrix form where the system matrix is a Vandermonde matrix, that is always invertible.

Solving the two systems (\ref{sys1}) and (\ref{sys2}), where $f^{-}(x^*), f^{-}_{x}(x^*), f^{-}_{xx}(x^*), f^{-}_{xxx}(x^*)$ and $f^{+}(x^*), f^{+}_{x}(x^*), f^{+}_{xx}(x^*), f^{+}_{xxx}(x^*)$ are the unknowns, we can obtain appro\-ximations for the jumps in the derivatives of $f$.

It is proved in \cite{ALR} that
\begin{equation}\label{acc_corr1}
\left(\begin{array}{l}
\widetilde{[f]}\\
\widetilde{[f']}\\
\widetilde{[f'']}\\
\widetilde{[f''']}
\end{array}\right)=
\left(\begin{array}{l}
f^{+}(x^*)-f^{-}(x^*)\\
f^{+}_x(x^*)-f^{-}_x(x^*)\\
f^{+}_{xx}(x^*)-f^{-}_{xx}(x^*)\\
f^{+}_{xxx}(x^*)-f^{-}_{xxx}(x^*)
\end{array}\right)+
\left(\begin{array}{l}
O(h^4)\\
O(h^3)\\
O(h^2)\\
O(h)
\end{array}\right).
\end{equation}

\subsection{The approximation properties}%\hfill
Let us analyze the approximation properties of the RC scheme for the case of point-values data and for cell-averages data.

For the point-values' case we consider that a discontinuity is placed in the interval $(x_j, x_{j+1})$. Also, we consider the two cases, jumps  in the function or jumps in its first derivative. Let us start by the first case.

When dealing with jump discontinuities using point-values of a function at a certain discretization resolution, there is no way to find a high order approximation to the exact location of the discontinuity. We can only hope to locate the interval containing the jump in the function, and we can assume that the singularity is at any point $x^*$ within this interval.

\begin{remark}
It is important to note that the technique described in this and previous sections can also be applied to treat the approximation near the boundary points $0$ and $1$. Outside the boundary we use a zero padding strate\-gy, and we treat each boundary as a discontinuity which position is known. Thus, treating the boundaries is just considering more than one discontinui\-ty in the data, as explained in Remark \ref{2omas}. Using this strategy enables us to extend the approximation results discussed below to the entire interval $[0,1]$. In the following, we implicitly assume that the boundaries are treated as above.
\end{remark}

\begin{theorem}{\bf - The point-values' case: { jump discontinuities.}}\label{Theorem1}
Let $f$ be a piecewise $C^4$-smooth function on $[0, 1]$,
with a jump discontinuity, and assume we are given the
vector of values $\{f_j\}_{j=0}^{N}$ at the data points
$\{x_j = jh\}_{j=0}^{N}$, $N=1/h+1$. Let's { choose} the singular point to be { at} $x^*\in (x_j,x_{j+1})$. Then the a\-ppro\-ximation obtained by the RC algorithm interpolates the data points, it is $C^{2-}\{[0,x^*)\cup (x^*,1]\}$, and $||f-\tilde{f}^\infty||_{\infty,[0,1]\setminus \{x^*\}} = O(h^4)$ as $h\to 0${, for any piecewise $C^4$-smooth $f$ that has a singular point at $x^*$ and that has as point-values $\{f(x_j)\}_{j=0}^{N}=\{f_j\}_{j=0}^{N}$.}
\end{theorem}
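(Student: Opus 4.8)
The plan is to establish the three assertions in increasing order of difficulty, disposing of interpolation and regularity quickly and then concentrating on the $O(h^4)$ estimate, which is the substantive part. For interpolation, I would use that the Dubuc--Deslauriers scheme (\ref{lineal}) is interpolatory, so its limit reproduces the input: $\tilde{g}^\infty(x_i)=\tilde{g}(x_i)$. Since $x^*\in(x_j,x_{j+1})$ is never a grid node, evaluating the correction step (\ref{T2}) at any $x_i$ gives $\tilde{f}^\infty(x_i)=\tilde{g}(x_i)+\widetilde{T}_+(x_i)=f(x_i)$ by (\ref{corr}). For the regularity, I would note that $\widetilde{T}_+$ from (\ref{tildeT}) vanishes identically on $[0,x^*)$ and equals a cubic polynomial on $(x^*,1]$, hence is $C^\infty$ on each side of $x^*$, while the subdivision limit $\tilde{g}^\infty$ is $C^{2-}$ on all of $[0,1]$; adding the two, $\tilde{f}^\infty\in C^{2-}\{[0,x^*)\cup(x^*,1]\}$, the only singularity being the jump carried by $\widetilde{T}_+$ at $x^*$.

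The organizing idea for the error bound is to compare against the exactly-corrected function $g:=f-T_+$, which by the remark following (\ref{T1}) lies in $C^3[0,1]$ and is piecewise $C^4$ with $g^{(4)}$ merely bounded (jumping) across $x^*$. Writing $D:=\widetilde{T}_+-T_+$ and using $\tilde{g}=g-D$ together with the linearity of the subdivision operator, I get the clean splitting
$$f-\tilde{f}^\infty=(g-g^\infty)+(D^\infty-D),$$
and I would bound each summand by $O(h^4)$ uniformly on $[0,1]\setminus\{x^*\}$. For the first summand, $g-g^\infty$ is the approximation error of the linear $4$-point scheme applied to $g$; at points whose refinement stencils never straddle $x^*$ the scheme sees only $C^4$ data and the standard $O(h^4)$ estimate applies. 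Near $x^*$ I would exploit cubic reproduction: letting $P$ be the common third-order Taylor polynomial of $g$ at $x^*$ (well defined since $g\in C^3$), the identity $P^\infty=P$ and linearity give $g^\infty-P=(g-P)^\infty$, and since all active nodes lie within $O(h)$ of $x^*$ where $|g-P|=O(h^4)$ by one-sided Taylor with bounded $g^{(4)}$, both $g^\infty-P$ and $g-P$ are $O(h^4)$, whence $g-g^\infty=O(h^4)$ there too.

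For the second summand, $D$ is a one-sided cubic vanishing on $[0,x^*)$ whose coefficients at the powers $0,1,2,3$ of $(x-x^*)$ are of sizes $O(h^4),O(h^3),O(h^2),O(h)$ by (\ref{acc_corr1}) and (\ref{apprT}). At points farther than a fixed stencil-width from $x^*$, every active node lies on one side of $x^*$, so $D^\infty=D$ exactly (by cubic reproduction on the right and triviality on the left) and the term vanishes; within $O(h)$ of $x^*$ the relevant samples of $D$ are all $O(h^4)$, hence $D^\infty-D=O(h^4)$. Combining the two bounds gives $\|f-\tilde{f}^\infty\|_{\infty,[0,1]\setminus\{x^*\}}=O(h^4)$.

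The step I expect to be the main obstacle is the near-singularity analysis of $g-g^\infty$: one must argue that a scheme tuned to $C^4$ data still delivers full fourth order when the stencils cross a point at which $g$ is only $C^3$. The resolution is exactly that the cancelled jumps make $g$ agree with the single cubic $P$ to fourth order throughout the $O(h)$-neighborhood of $x^*$, so cubic reproduction absorbs the defect. A secondary point worth spelling out, specific to the jump case, is that since $x^*$ is not pinned down by the data I must invoke (\ref{acc_corr1}) for the particular $f$ at hand; this is legitimate because every piecewise $C^4$ function consistent with $\{f_j\}$ and singular at $x^*$ shares the same one-sided cubic interpolants feeding the systems (\ref{sys1})--(\ref{sys2}), so the approximate jumps defining $\widetilde{T}_+$ match each such $f$'s true jumps to the stated orders.
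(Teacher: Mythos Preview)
Your proposal is correct and follows essentially the same approach as the paper: interpolation and piecewise $C^{2-}$ regularity are handled identically, and the error analysis rests on the same ingredients---locality of the 4-point scheme, cubic reproduction, and the observation that near $x^*$ the smoothed data differ from a single cubic (your $P$, the paper's $T_3^-$) by $O(h^4)$. Your explicit splitting $f-\tilde f^\infty=(g-g^\infty)+(D^\infty-D)$ is a slightly cleaner bookkeeping of the two error sources than the paper's direct manipulation of $\tilde g$, but the underlying estimates are the same.
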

\begin{proof}
The 4-point subdivision scheme is an interpolatory scheme and the limit function it defines is $C^{2-}$ \cite{DD}. Hence,
$$\tilde{g}^\infty(x_j)=f(x_j)-\tilde{T}_+(x_j),$$
and  $\tilde{g}^\infty\in C^{2-}[0,1]$.
Adding the correction term in (\ref{T2}) yields the interpolation to $f$ as
$${\tilde f}^\infty={\tilde g}^\infty+{\tilde T}_+,$$
which has the jump singularity at $x^*$.

To prove the approximation order, we use the fact that the subdivision scheme is a local procedure and that it reproduces cubic polynomials. Furthermore, by \cite{DD},  \cite{DLG}, for $f\in C^4(\mathbb{R})$ the 4-point subdivision scheme gives $O(h^4)$ approximation order to $f$. Therefore, away from the singularity, for $x<x_{j-2}$ and for $x>x_{j+3}$,
$$|\tilde{g}^\infty-(f-\tilde{T}_+)|=O(h^4),$$
as $h\to 0$. Correcting the approximation by $\tilde{T}_+$ yields the approximation result away from $x^*$.

To show the approximation order near $x^*$, we rewrite the data for the subdivision process, $\tilde{g}(x_j)=f(x_j)-\tilde{T}_+(x_j)$, as
$$\tilde{g}(x_j)=f(x_j)-T_+(x_j)-T_3^-(x_j)+(T_+(x_j)-\tilde{T}_+(x_j))+T_3^-(x_j).$$
Denoting $q=f-T_+-T_3^-$, we observe that $q=f-T_3^-$ for $x<x^*$ and
$q=f-T_3^+$ for $x>x^*$. In both cases $q(x)=O(h^4)$ near $x^*$. Next we note that by (\ref{order4})
$$(T_+(x_i)-\tilde{T}_+(x_i))=O(h^4),$$
for $i-4\le j\le i+5$.
Using the above, plus the locality of the subdivision scheme and the reproduction of cubic polynomials, it follows that
$$\tilde{g}^\infty(x)=T_3^-(x)+O(h^4), \ \ x_{j-2}\le x\le x_{j+3}.$$
Finally, in view of (\ref{T}) and (\ref{apprT}) we obtain
\begin{equation}\label{Fin}
\tilde{f}^\infty(x)=\tilde{g}^\infty(x)+\tilde{T}_+(x)=
\left\{\begin{array}{l}
T_3^-(x)+O(h^4),\ \ \ \ x_{j-2}\le x\le x^*,\\
T_3^+(x)+O(h^4),\ \ \ \ x^*\le x\le x_{j+3},
\end{array}\right.
\end{equation}
implying that
$$\tilde{f}^\infty(x)=f(x)+O(h^4), \ \ \ x_{j-2}\le x\le  x_{j+3},\ x\ne x^*,$$
which completes the proof.
\end{proof}

{
\begin{remark}
In order to include the point $x^*$ in Theorem \ref{Theorem1}, we need to suppose that the initial data comes from a function that is $C^4$ smooth from the left and from the right of $x^*$, as this information, as well as the exact position of the discontinuity is lost in the discretization process.
\end{remark}
}
%======
Now we can continue with the case when we find corner singularities, i.e. discontinuities in the first derivative.

{
\begin{theorem}{\bf - The point-values' case: discontinuities in the first derivative.}\label{Theorem2}
Let $f$ be a piecewise $C^4$-smooth function on $[0, 1]$,
with a jump discontinuity in the first derivative, and assume we are given the
vector of values $\{f_j\}_{j=0}^{N}$ at the data points
$\{x_j = jh\}_{j=0}^{N}$, $N=1+1/h$. Let us suppose that the singular point is placed at $s^*\in (x_j,x_{j+1})$ and that we have located the singularity at the approximated location $x^*\in (x_j,x_{j+1})$. Then, the a\-ppro\-ximation obtained by the RC algorithm interpolates the data points, it is $C^{2-}\{[0,x^*)\cup (x^*,1]\}$, and $||f-\tilde{f}^\infty||_{\infty} = O(h^4)$ as $h\to 0$.
\end{theorem}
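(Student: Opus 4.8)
The plan is to follow the same three-step structure as in the proof of Theorem~\ref{Theorem1}, highlighting where the corner-singularity case differs from the jump case. The decisive structural difference is that here the exact singular location $s^*$ is recoverable from point-value data up to high accuracy, so in addition to the $O(h^4)$ control of the jump coefficients in (\ref{order4}) we also have an accurate approximation $x^*$ of $s^*$; the reconstruction uses $x^*$ as the corner of the one-sided polynomial $\widetilde{T}_+$, whereas the true corner sits at $s^*$. The interpolation and regularity claims are immediate and identical to the previous theorem: the $4$-point Dubuc--Deslauriers scheme is interpolatory and $C^{2-}$, so $\widetilde{g}^\infty$ interpolates the smoothed data $\widetilde{g}(x_j)=f(x_j)-\widetilde{T}_+(x_j)$ and is $C^{2-}[0,1]$, and after adding back $\widetilde{T}_+$ the reconstruction $\widetilde{f}^\infty=\widetilde{g}^\infty+\widetilde{T}_+$ interpolates $f$ and is $C^{2-}$ on each side of $x^*$, the only loss of regularity being produced by the corner of $\widetilde{T}_+$.

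For the approximation order away from the singularity, I would argue exactly as before: for $x<x_{j-2}$ and $x>x_{j+3}$ the data fed to the subdivision scheme coincides with samples of a genuinely $C^4$ function (since $\widetilde{T}_+$ matches the relevant one-sided smooth branch up to the $O(h^4)$ error in the jump coefficients), and the scheme's locality plus its $O(h^4)$ accuracy on $C^4$ data gives $|\widetilde{g}^\infty-(f-\widetilde{T}_+)|=O(h^4)$, which survives the correction step. First I would record the decomposition $\widetilde{g}(x_j)=q(x_j)+(T_+(x_j)-\widetilde{T}_+(x_j))+T_3^-(x_j)$ with $q=f-T_+-T_3^-$, exactly as in Theorem~\ref{Theorem1}, so that near the singularity $q(x)=O(h^4)$, the middle term is $O(h^4)$ by (\ref{order4}), and the cubic $T_3^-$ is reproduced exactly by the scheme, yielding $\widetilde{g}^\infty(x)=T_3^-(x)+O(h^4)$ on $x_{j-2}\le x\le x_{j+3}$ and hence the piecewise bound (\ref{Fin}).

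The one genuinely new point — and the step I expect to be the main obstacle — is controlling the discrepancy introduced by reconstructing with the corner at the approximate location $x^*$ rather than at the true $s^*$. In the narrow interval between $s^*$ and $x^*$ the one-sided polynomial $\widetilde{T}_+$ is ``switched on'' on the wrong side, so $\widetilde{f}^\infty$ is built from the wrong Taylor branch there. The key observation that rescues the $O(h^4)$ bound is that $T_+(x)=[f]+[f']\,(x-s^*)+\cdots$ vanishes to first order at the corner with $[f]=0$ for a pure derivative jump, so on an interval of length $|x^*-s^*|$ the size of $T_+$ is governed by $[f']\,(x-s^*)$; since the localization procedure of \cite{ACDD} delivers $|x^*-s^*|=O(h^4)$, the mismatch contributes at most $O(h^4)$ to the reconstruction error on that sliver, matching the bound on the rest of the stencil. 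I would therefore split the near-singularity estimate into the two regions $x\le\min(x^*,s^*)$ and $x\ge\max(x^*,s^*)$, where (\ref{Fin}) applies verbatim, and the transitional region of width $O(h^4)$, where both $T_+-\widetilde{T}_+$ and the branch-mismatch term are separately $O(h^4)$; combining these gives $\|f-\widetilde{f}^\infty\|_{\infty}=O(h^4)$ on all of $[0,1]$ and completes the proof.
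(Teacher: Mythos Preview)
Your proposal is correct and follows essentially the same route as the paper: reduce everything outside the sliver between $s^*$ and $x^*$ to the argument of Theorem~\ref{Theorem1}, and on that sliver exploit $[f]=0$ together with the $O(h^4)$ localisation accuracy from \cite{ACDD} to bound the branch mismatch by $O(h^4)$. The paper makes this last step slightly more explicit via the triangle inequality $|f^-(x)-(\tilde f^\infty)^+(x)|\le |f^-(x)-f^+(x)|+|f^+(x)-(\tilde f^\infty)^+(x)|$ and invokes the precise form of the localisation bound $|s^*-x^*|\le C h^4\sup|f^{(4)}|/|[f']|$ so that the factor $|[f']|$ cancels cleanly, but your argument is the same in substance.
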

\begin{proof}
Outside the interval $(x^*, s^*)$ (if the approximated location $x^*$ is to the right of $s^*$, the analysis is equivalent), the proof is similar to the one followed in Theorem \ref{Theorem1}.

Let's now analyze the case when we subdivide in the interval $(x^*, s^*)$. We can just follow the proof of Theorem 1 in \cite{ACDD}. We suppose that the discontinuity is placed in the interval $[x_{j}, x_{j+1}]$, being $h=x_{j+1}-x_{j}$ a uniform grid-spacing. A graph of this case is plotted in Figure \ref{disc_centro}. As we are using cubic polynomials for the location of the discontinuity, it follows that the distance between $x^*$ and $s^*$ is $O(h^4)$, as proved in statement 3 of Lemma 3 of \cite{ACDD}. Let us analyze the accuracy attained by the subdivision scheme in the point-values discretization in the interval $(x^*, s^*)$. The left side of the discontinuity placed at $s^*$  is labeled as the $-$ side and the right side of $s^*$ as the $+$ side. The approximated location of the discontinuity is labeled as $x^*$. Following the process described in Section \ref{calc_disc} we obtain the jumps in the function and its derivatives at $x^*$. It is clear that if the piecewise function is not composed of polynomials of degree smaller or equal than 3, the location of the discontinuity will be approximated. In this case, obtaining the jump in the function and its derivatives at $x^*$ is just as extending $f^+$ from $s^*$ up to $x^*$, as shown in Figure \ref{disc_centro}.
Let's write the limit function obtained by the RC algorithm as,
\begin{equation}\label{e1}
\tilde{f}^\infty(x)=\left\{\begin{array}{ll}
(\tilde{f}^\infty)^-(x)&x<x^*,\\
(\tilde{f}^\infty)^+(x)&x>x^*.
\end{array}\right.
\end{equation}
The error obtained at any point $x\in (x^*, s^*)$ will be,
\begin{equation}\label{e1}
|f(x)-\tilde{f}^\infty(x)|=|f^-(x)-(\tilde{f}^\infty)^+(x)|\le|f^-(x)-f^+(x)|+ |f^+(x)-(\tilde{f}^\infty)^+(x)|.
%|f^+(x^*)+f^+_x(x^*)(x_1-x^*)+\sup_{\mathbb{R}\backslash s^*}\{f^+_{xx}(x^*)\}(x_1-x^*)^2|
\end{equation}
Following similar arguments to the ones used in the first part of the proof, the second term is bounded  and is $O(h^4)$. For the first term, we can use second order Taylor expansion around $s^*$ to write,
\begin{equation}\label{e2}
\begin{split}
|f^-(x)-f^+(x)|
\le |[f']|(s^*-x)+\sup_{\mathbb{R}\backslash s^*}|f''|(s^*-x)^2\\
\le (s^*-x^*)\left(|[f']|+\sup_{\mathbb{R}\backslash s^*}|f''|h\right).
\end{split}
\end{equation}
As $h<h_c$ (see (\ref{hc2})), then
\begin{equation}\label{e3}
\begin{aligned}
|f^-(x)-f^+(x)|&\le \frac{5}{4}|[f']|(s^*-x^*).
\end{aligned}
\end{equation}
Now, we can use point 3 of Lemma 2 of \cite{ACDD}, that establishes that $(s^*-x^*)\le C\frac{h^m\sup_{x\in\mathbb{R}\backslash\{s^*\}} |f^{(m)}(x)|}{|[f^{'}]|}$ (with $m=4$ in our case), to obtain that the first term of the right hand side of (\ref{e1}) is also bounded and is $O(h^4)$. This concludes the proof.
\end{proof}
}

\begin{figure}[!ht]
\begin{center}
\resizebox{13cm}{!} {
\begin{picture}(500,130)(0,0)
%escala k=2
\put(50,-10){\line(1,0){470}}
%\put(0,-10){\line(0,1){5}}
\put(80,-10){\line(0,1){5}}
\put(160,-10){\line(0,1){5}}
\put(290,-10){\line(0,1){5}}%x_{j+3/2}
\put(262,-10){\line(0,1){5}}%x^*
\put(240,-10){\line(0,1){5}}
\put(320,-10){\line(0,1){5}}
\put(400,-10){\line(0,1){5}}
\put(480,-10){\line(0,1){5}}
%celdas escala k=2
\put(60,-20){$\cdots$}
%\put(0,86){$f_{j-2}$}
\put(75,90){$f_{j-2}$}
\put(155,78){$f_{j-1}$}
\put(240,53){$f_{j}$}
\put(305,46){$f_{j+1}$}
\put(400,60){$f_{j+2}$}
\put(470,63){$f_{j+3}$}
\put(500,-20){$\cdots$}

%\put(-5,-23){$x_{j-2}$}
\put(75,-23){$x_{j-2}$}
\put(155,-23){$x_{j-1}$}
%\put(270,-23){$x_{j-1/2}$}
\put(260,-3){$x^*$}
\put(290,-3){$s^*$}
\put(235,-23){$x_{j}$}
\put(315,-23){$x_{j+1}$}
\put(395,-23){$x_{j+2}$}
\put(475,-23){$x_{j+3}$}

%86    30     6    74   114

%\put(0,-10){\line(0,1){86}}
%\put(0,76){\circle{5}}
\put(80,-10){\line(0,1){87}}
\put(80,77){\circle{5}}
\put(160,-10){\line(0,1){73}}
\put(160,63){\circle{5}}
\put(240,-10){\line(0,1){55}}
\put(240,45){\circle{5}}
\put(320,-10){\line(0,1){50}}
\put(320,40){\circle{5}}
\put(400,-10){\line(0,1){57}}
\put(400,47){\circle{5}}
\put(480,-10){\line(0,1){66}}
\put(480,56){\circle{5}}

%polinomio izquierda
\qbezier(70,79)(250,50)(290,35)
%\qbezier(70,79)(250,50)(260,5)
%polinomio derecha
\qbezier(290,35)(330,45)(540,60)
\linethickness{0.5pt}
\qbezier[40](263,10)(275,30)(320,40)
%\qbezier(260,5)(300,60)(540,50)
%lado mas
\put(210,70){\circle{10}}
\put(206,68){$-$}
%lado menos
\put(350,70){\circle{10}}
\put(346,68){$+$}
\end{picture}
}
\end{center}
\caption{In this figure we can see an example of a corner singularity placed in the interval $(x_{j}, x_{j+1})$. The left side of the discontinuity placed at $s^*$  is labeled as the $-$ side and the right side of $s^*$ as the $+$ side. The approximated location of the discontinuity is labeled as $x^*$.}\label{disc_centro}
\end{figure}
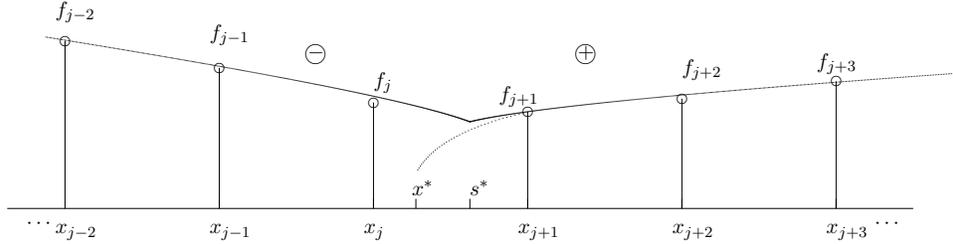
%======

Due to the order of accuracy attained by the RC algorithm close to the discontinuity and to the regularity and convergence of the linear scheme we can state the following corollary:
\begin{corollary}
The RC scheme does not introduce Gibbs phenomenon nor diffusion close to the discontinuities.
\end{corollary}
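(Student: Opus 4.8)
The plan is to derive the corollary directly from the quantitative estimates established in Theorems \ref{Theorem1} and \ref{Theorem2}, after first fixing precise meanings for the two phenomena to be excluded. Recall that the \emph{Gibbs phenomenon} is the appearance, near a discontinuity, of spurious oscillations whose amplitude (the overshoot/undershoot) does \emph{not} decay as the resolution $h\to 0$, while \emph{diffusion} (smearing) is the loss of the sharp singularity structure, manifested as a degradation of the local order of accuracy in a neighborhood of the singular point. The strategy is to show that both are incompatible with the uniform $O(h^4)$ error bound, valid up to the singular point from each side, that the two theorems provide.

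First I would address the Gibbs phenomenon. By Theorems \ref{Theorem1} and \ref{Theorem2} we have $\|f-\tilde f^\infty\|_{\infty,[0,1]\setminus\{x^*\}}=O(h^4)$ as $h\to0$; in particular the estimate \eqref{Fin} holds on $x_{j-2}\le x\le x^*$ and on $x^*\le x\le x_{j+3}$, that is, arbitrarily close to $x^*$ from both sides. Hence for every $x\neq x^*$ the deviation $|\tilde f^\infty(x)-f(x)|$ is bounded by $Ch^4$ and therefore tends to $0$ with $h$. Since a non-vanishing overshoot near the discontinuity is exactly what characterizes Gibbs oscillations, this rules them out.

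Next I would treat diffusion. The correction step \eqref{T2} reinstates the singular part $\tilde T_+$, which agrees with the true jump polynomial $T_+$ up to $O(h^4)$ near $x^*$ by \eqref{apprT}; consequently \eqref{Fin} shows that $\tilde f^\infty$ reproduces $T_3^-$ to the left of $x^*$ and $T_3^+$ to the right, each with $O(h^4)$ error, right up to the singular point. Thus the jump is represented sharply rather than smeared, and the local order of accuracy is the full $O(h^4)$ rather than the reduced order typical of diffusive schemes. Combined with the $C^{2-}\{[0,x^*)\cup(x^*,1]\}$ regularity of the two theorems --- the same smoothness as the underlying linear scheme on each smooth piece --- this establishes the absence of diffusion.

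The only genuinely delicate point is definitional rather than computational: one must phrase ``no Gibbs'' and ``no diffusion'' as the statements that the overshoot decays and that the full approximation order is retained up to the singularity, so that the $O(h^4)$ bounds of Theorems \ref{Theorem1} and \ref{Theorem2} apply verbatim. All quantitative estimates are already supplied by those theorems, so no further calculation is expected to be needed.
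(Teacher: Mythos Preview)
Your argument is correct, but it proceeds differently from the paper. You deduce the absence of Gibbs oscillations and diffusion as a quantitative consequence of the uniform $O(h^4)$ error bounds already proved in Theorems~\ref{Theorem1} and~\ref{Theorem2}: a non-decaying overshoot would contradict $\|f-\tilde f^\infty\|_{\infty,[0,1]\setminus\{x^*\}}=O(h^4)$, and the fact that \eqref{Fin} holds right up to $x^*$ from each side rules out smearing. The paper instead argues structurally from the three steps of the algorithm: Step~1 removes the singularity so that the linear scheme in Step~2 is applied to smooth data (hence no Gibbs can arise), and Step~3 adds the singular part back verbatim (hence no diffusion is introduced). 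Your route has the advantage of making the definitions of ``Gibbs'' and ``diffusion'' explicit and reducing the claim to already-established estimates; the paper's route is shorter and explains \emph{why} the phenomena cannot occur at the level of the algorithm's design, independently of the particular approximation order achieved.
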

\begin{proof}
Given the fact that the correction introduced in step 1 of the algorithm eliminates the presence of the discontinuity up to the order of accuracy of the linear scheme, we can consider that the linear scheme in step 2 is a\-pplied to smooth data after the correction, so no Gibbs effect can appear. Step 3 consists in correcting back the subdivided data, so no diffusion can appear.
\end{proof}
%\begin{corollary}
%The new subdivision scheme does not introduce diffusion close to the discontinuities.
%\end{corollary}
\begin{corollary}
The RC scheme reproduces piecewise cubic polynomials.
\end{corollary}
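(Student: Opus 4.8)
The plan is to show that when $f$ is itself piecewise cubic --- a cubic polynomial $f^-$ on $[0,x^*)$ and a cubic polynomial $f^+$ on $(x^*,1]$, with its single singular point at $x^*$ --- the three stages of the RC algorithm reproduce $f$ exactly, i.e. $\tilde f^\infty\equiv f$. I would track the data through Steps 1--3 of Section \ref{algorithm1} and verify that each stage acts exactly on piecewise-cubic input.

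First I would establish that the correction term is computed exactly, $\tilde T_+ = T_+$. Since $f^-$ and $f^+$ are cubics, their third-order Taylor expansions at $x^*$ are exact, $T_3^-=f^-$ and $T_3^+=f^+$, and the Taylor remainders that generate the $O(h^k)$ error terms in (\ref{order4}) and (\ref{acc_corr1}) vanish identically. Concretely, the jump extraction of Section \ref{calc_disc} solves the Vandermonde systems (\ref{sys1})--(\ref{sys2}), whose third-order Taylor relations hold with no remainder for cubic $f^\pm$; since the Vandermonde matrix is invertible, the solve returns the exact one-sided derivatives, so $\widetilde{[f^{(k)}]}=[f^{(k)}]$ for $k=0,1,2,3$, and hence $\tilde T_+ = T_+$ by (\ref{tildeT})--(\ref{T1}). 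In the corner case I would also note that the located singularity coincides with the true one: the difference polynomial $H=T_3^+-T_3^-=f^+-f^-$ is obtained exactly, and since $f^+(s^*)=f^-(s^*)$ by continuity, its unique root in the interval is $s^*$, so the detection gives $x^*=s^*$.

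Next I would follow the data through the algorithm. In Step 1 the smoothed data is $\tilde g = f - \tilde T_+ = f - T_+$, which for $x<x^*$ equals $f^-$ and for $x\ge x^*$ equals $f^+ - (f^+ - f^-) = f^-$; thus $g\equiv f^-$ is a single cubic polynomial on all of $[0,1]$, with no singularity. In Step 2, because the 4-point Dubuc-Deslauriers scheme reproduces cubic polynomials, applying it to the samples $\{f^-(x_j)\}$ yields $\tilde g^\infty\equiv f^-$ exactly. In Step 3 the correction (\ref{T2}) gives $\tilde f^\infty = \tilde g^\infty + \tilde T_+ = f^- + T_+$, which equals $f^-$ for $x<x^*$ and $f^- + (f^+ - f^-) = f^+$ for $x\ge x^*$; that is, $\tilde f^\infty\equiv f$, proving reproduction.

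The main obstacle I anticipate is precisely the exactness claim of the second paragraph: one must argue carefully that the jump-relation computation incurs \emph{no} error on piecewise-cubic input, i.e. that the $O(h^k)$ terms in (\ref{order4}) and (\ref{acc_corr1}) vanish rather than merely being small. This reduces to the exactness of the third-order Taylor expansions used to build (\ref{sys1})--(\ref{sys2}) for cubics, the exactness of the Vandermonde solve, and the exact localization of $x^*$ in the corner case. I would close by remarking that the reproduced degree is dictated by the linear operator of Step 2: the argument reproduces piecewise polynomials up to degree three precisely because that is the reproduction degree of the 4-point scheme, and the same construction with larger stencils would raise the reproduced degree correspondingly.
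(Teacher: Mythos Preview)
Your proposal is correct and in fact more explicit than the paper's own proof. The paper dispatches the corollary in two lines: it observes that for piecewise cubic data the singularity location is exact (since cubic interpolants are used for the detection) and then simply cites Theorem~\ref{Theorem1}, leaving implicit that all the $O(h^4)$ error terms in that theorem's proof actually vanish for piecewise-cubic input. You instead unpack the three stages of the algorithm and verify exactness directly: $\tilde T_+=T_+$ because the Vandermonde systems (\ref{sys1})--(\ref{sys2}) have no Taylor remainder for cubics, the smoothed data collapses to the single cubic $f^-$, the 4-point scheme reproduces it, and the correction restores $f$.

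The advantage of your route is that it is self-contained and makes transparent \emph{why} the result holds, without needing to revisit the proof of Theorem~\ref{Theorem1} to check that each $O(h^4)$ term is in fact zero rather than merely small. The paper's route is shorter but relies on the reader tracing those error terms back to Taylor truncation. Your closing remark about higher-degree reproduction with larger stencils is also in the spirit of the paper's introductory comments and is a natural generalization to note.
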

\begin{proof}
For piecewise cubic polynomials, the location of the discontinuity is exact. Remind that we are using cubic interpolating polynomials for the location algorithm. Following Theorem \ref{Theorem1} we get the proof. Mind that if the singularity is in the first derivative, the proof given for Theorem \ref{Theorem1} would be equivalent for this case, as now we know that the exact position of the singularity is $x^*$.
\end{proof}

%\begin{remark}
%It is important to note that the technique described in this and previous sections can also be applied to treat the boundaries. We can treat any boundary as a discontinuity which position is known and that we can suppose  close to the boundary. Outside the boundary we can just use a zero padding strategy and consider that the function presents more than one discontinuity. Then, treating the boundaries is just considering that we have more than one discontinuity in the data, as explained in Remark \ref{2omas}.
%\end{remark}

In what follows, we enumerate the main properties of the RC algorithm:

\begin{itemize}
\item The RC scheme has the same (piecewise) regularity as the linear subdivision used.

\item The RC scheme does not produce any oscillations in the presence of singularities.

\item The algorithm has high accuracy without diffusion.

\item The algorithm is interpolatory. %If we start from some discretized data at the coarsest scale $f^0$, then we are not able to know which is the original function $f$.
%In fact, there are infinitely many possible original functions, thus we cannot speak about the error exactly, as we can not compare with the original function from which the data was obtained, as it is unknown.
%At the same time, there always exist a function that crosses exactly through the data points and for which the singularities are exactly placed where the algorithm has detected the singularities. In this sense, the algorithm recovers the previously mentioned function with $O(h^4)$ accuracy and without introducing numerical artifacts.

\item The RC scheme is exact for piecewise polynomial functions of the same degree as the one used in the
construction of the scheme.

\item The RC scheme is stable due to the stability of the linear scheme used.
\end{itemize}

Let us consider now the cell-averages discretization. In this case, we will only analyze the detection of jumps in the function.

\begin{theorem}{\bf - The cell-averages' case.}\label{teocell}
Let $f$ be a piecewise $C^3$-smooth function on $[0, 1]$, with a jump discontinuity at $s^*$, and assume we are given the cell-averages
$$\bar{f}_j=\frac{1}{h}\int^{x_j}_{x_{j-1}}f(x)dx,\quad j=1, \cdots, N.$$
Applying the algorithm in Section \ref{algorithm2} to the data sequence $\{F_j\}$ defined in (\ref{primitive}) to approximate the primitive function $F$, the following results hold:

1. The approximate singular point $x^*$ satisfies
$|s^*-x^*|\le Ch^4.$

2. The approximation $G(x)$ to the primitive function $F(x)$ interpolates the data $\{F_j\}_{j=1}^N$, and $|F(x)-G(x)|\le Ch^4$, $x\in [0,1]$.

3. The approximation $g(x)=G'(x)$ to $f(x)$ satisfies $|f(x)-g(x)|=O(h^3)$ as $h\to 0$ for $x\in [0,1]$, $x$ not in the closed interval between $s^*$ and $x^*$.

4. Denoting $\delta=x^*-s^*$,
$$|f(x)-g(x+\delta)|\le Ch^3\ \ \forall x\in [0,1]\setminus \{s^*\}.$$

\end{theorem}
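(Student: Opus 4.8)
The plan is to reduce the whole statement to the point-value theory already proved for the primitive and then to control the single order lost under differentiation. First I would record the structural fact underlying the cell-average algorithm: since $f$ is piecewise $C^3$ and integrable, its primitive $F(x)=\int_0^x f$ is \emph{continuous} and piecewise $C^4$, with $[F]=0$ but $[F']=[f]\neq 0$. Thus a jump of $f$ at $s^*$ becomes a corner (first-derivative) singularity of $F$ at $s^*$, and running the point-value RC algorithm of Section \ref{algorithm1} on $\{F_j\}$ is exactly the corner-detection setting of Theorem \ref{Theorem2}, now applied to a piecewise $C^4$ function (which is why the hypothesis on $f$ is only $C^3$).

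With this reduction, statements 1 and 2 are almost immediate. For statement 1 the detection is run on $F$, whose first-derivative jump is $[F']=[f]$ and whose fourth derivative is $f'''$ (bounded); invoking the location estimate of \cite{ACDD} (point 3 of Lemma 2, with $m=4$) in the corner case gives $|s^*-x^*|\le C\,h^4\sup|f'''|/|[f]|=O(h^4)$, the relevant critical scale being precisely (\ref{hc}). For statement 2, $G$ is produced by the interpolatory 4-point scheme through the RC algorithm, so it interpolates $\{F_j\}$ automatically, and Theorem \ref{Theorem2} applied to $F$ yields $\|F-G\|_{\infty,[0,1]}=O(h^4)$.

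The technical core is statement 3, where one must pass from $F-G=O(h^4)$ to $f-g=F'-G'=O(h^3)$. I would \textbf{not} differentiate the error bound directly; instead I would use cubic reproduction and locality. On the smooth far field, by cubic reproduction the correction $\widetilde T_+$ is returned exactly over any all-right stencil, so there $G$ is the pure 4-point limit of $\{F_j\}$; comparing with the local cubic interpolant $P$ of $F$ gives $F'-P'=O(h^3)$ by standard interpolation bounds, while $G-P$ is the limit of data of size $O(h^4)$, whose derivative is $h^{-1}\sum_k d_k\,\phi'((x-x_k)/h)=O(h^4)/h=O(h^3)$ (finite support and bounded $\phi'$ from the $C^{2-}$ regularity). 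Near the singularity I would reuse the decomposition from the proof of Theorem \ref{Theorem1}, applied to $F$: on $[x_{j-2},x_{j+3}]$ one has $\tilde g(x_i)=q(x_i)+(T_+-\widetilde T_+)(x_i)+T_3^-(x_i)$ with $q(x_i)=O(h^4)$ and $(T_+-\widetilde T_+)(x_i)=O(h^4)$ over the whole stencil, so by linearity, exact reproduction of $T_3^-$, and the same $O(h^4)/h$ bound on the derivative of the limit, $(\tilde g^\infty)'=(T_3^-)'+O(h^3)$; combined with $(\widetilde T_+)'=T_+'+O(h^3)$ (differentiating (\ref{order4}) where $|x-x^*|=O(h)$) this gives $g=f+O(h^3)$ on each side, away from the tiny interval between $s^*$ and $x^*$. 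The main obstacle is exactly this derivative-order bookkeeping: establishing, uniformly, that differentiating the subdivision limit costs one power of $h$ and no more, which rests on the $C^{2-}$ regularity and cubic reproduction of the Dubuc-Deslauriers scheme.

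Finally, statement 4 repairs the only place statement 3 fails, the $O(h^4)$-wide interval between $s^*$ and $x^*$, by realigning the singularities. Since $g$ carries its jump at $x^*$, the shifted function $g(\cdot+\delta)$ with $\delta=x^*-s^*$ has its jump at $s^*$, exactly where $f$ jumps. I would then split by the side of $s^*$: for $x\neq s^*$, since $|\delta|=O(h^4)$ the points $x$ and $x+\delta$ lie on the same smooth piece of the aligned configuration, with $x+\delta\notin[s^*,x^*]$; applying statement 3 at $x+\delta$ and the Lipschitz bound $f^\pm(x+\delta)=f^\pm(x)+O(h^4)$ yields $|f(x)-g(x+\delta)|\le Ch^3$. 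This converts the misalignment error, which is $O(1)$ for $g(x)$ itself inside $(s^*,x^*)$, into the claimed uniform $O(h^3)$ bound, completing the proof.
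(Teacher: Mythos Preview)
Your overall strategy---reduce to the point-value corner case for the primitive $F$, then lose one order under differentiation---is exactly the paper's. Statements 1 and 2 match the paper almost verbatim (the paper cites Lemma~3 rather than Lemma~2 of \cite{ACDD} for the $O(h^4)$ location estimate, and invokes Theorem~\ref{Theorem1}/\ref{Theorem2} for $F$ just as you do).

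The one genuine methodological difference is in statement~3. The paper does not carry out the derivative-order bookkeeping you set up with the fundamental function $\phi$ and the $O(h^4)/h$ scaling; it simply invokes the result of \cite{Ruibin} that for $C^4$ data the 4-point scheme approximates the \emph{derivative} to $O(h^3)$, and then notes that both $F$ and $G$ are differentiable off the interval between $s^*$ and $x^*$. Your argument is more self-contained and makes explicit why the near-singularity region costs no more than the far field (reusing the decomposition $\tilde g=q+(T_+-\widetilde T_+)+T_3^-$), whereas the paper's is a one-line citation; both are valid, and yours has the advantage of not needing an external lemma while the paper's is shorter.

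For statement~4 your idea is the paper's, but your execution has a small gap. You assert that for $x\neq s^*$ one always has $x+\delta\notin[s^*,x^*]$, so that statement~3 can be applied at $x+\delta$. This fails when $x\in(s^*-\delta,s^*)$: then $x+\delta\in(s^*,x^*)$, precisely the forbidden interval. The paper handles this by treating the two sides of $s^*$ asymmetrically: for $x<s^*$ it uses the Lipschitz continuity of $g$ on $\{x<x^*\}$ to write $g(x+\delta)=g(x)+O(h^4)$ and then applies statement~3 at $x$ (which \emph{is} outside $[s^*,x^*]$); for $x>s^*$ it uses the Lipschitz continuity of $f$ on $\{x>s^*\}$ to write $f(x)=f(x+\delta)+O(h^4)$ and applies statement~3 at $x+\delta>x^*$. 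Incorporating this asymmetric splitting fixes your argument.
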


\begin{proof}
Figure \ref{disc_centro2} represents a possible scenario for this theorem. Let us prove every point of the theorem:
\begin{enumerate}
\item First we note that the primitive function $F$ is piecewise $C^4$.
After locating the interval $[x_j,x_{j+1}]$ containing the singular point, the algorithm finds the two cubic polynomials, $\tilde{T}_-$ and $\tilde{T}_+$ , respectively interpolating the data at four points to the left and to the right of the singularity interval. As we are using cubic polynomials for the location of the singularity, the intersection point $x^*$ of the two polynomials in $[x_j,x_{j+1}]$ satisfies $|s^*-x^*|\le Ch^4$, { as proved in point 3 of Lemma 3 of \cite{ACDD}}.
\item Now we can follow the steps of Theorem \ref{Theorem1} to deduce  that $|F(x)-G(x)|\le Ch^4$, $x\in [0,1]$.
\item We also use here the result in \cite{Ruibin} that  for $F\in C^4(\mathbb{R})$ the 4-point subdivision scheme gives $O(h^3)$ approximation order to the derivative of $F$. This, together with the observation that both $F$ and $G$ are differentiable outside the interval between $s^*$ and $x^*$, yield the result in item 3.

\item To prove item 4 let us assume, w.l.o.g., that $x^*>s^*$. Both $f$ and $g(\cdot+\delta)$  have their jump discontinuity at $s^*$. For $x<s^*$ we have $x+\delta<x^*$.
Since $|\delta|=O(h^4)$, and $g$ is continuous for $x<x^*$, it follows that
$$g(x+\delta)=g(x)+O(h^4).$$
Hence,
$$|f(x)-g(x+\delta)|=|f(x)-g(x)|+O(h^4),$$
and by item 3, $|f(x)-g(x)|=O(h^3)$.
Similarly, for $x>s^*$, $x+\delta>x^*$ and $|f(x+\delta)-f(x)|=O(h^4)$. Hence,
$$|f(x)-g(x+\delta)|=|f(x+\delta)-g(x+\delta)|+O(h^4)=O(h^3).$$
\end{enumerate}
\end{proof}

\begin{figure}[!ht]
\begin{center}
\resizebox{13cm}{!} {
\begin{picture}(500,130)(0,0)
%escala k=2
\put(50,-10){\line(1,0){150}}
\put(220,-10){\line(1,0){140}}
%\put(0,-10){\line(0,1){5}}
\put(80,-10){\line(0,1){5}}
\put(160,-10){\line(0,1){5}}
\put(290,-10){\line(0,1){5}}%x_{j+3/2}
\put(262,-10){\line(0,1){5}}%x^*
\put(102,-10){\line(0,1){5}}%x_{j+3/2}
\put(132,-10){\line(0,1){5}}%x^*
\put(240,-10){\line(0,1){5}}
\put(320,-10){\line(0,1){5}}
%\put(400,-10){\line(0,1){5}}
%\put(480,-10){\line(0,1){5}}
%celdas escala k=2
\put(50,-20){$\cdots$}
\put(180,-20){$\cdots$}
\put(215,-20){$\cdots$}
\put(340,-20){$\cdots$}
%\put(0,86){$f_{j-2}$}
%\put(75,90){$F_{j-2}$}
%\put(155,78){$F_{j-1}$}
\put(240,53){$F_{j}$}
\put(305,46){$F_{j+1}$}
%\put(400,60){$F_{j+2}$}
%\put(470,63){$F_{j+3}$}
%\put(500,-20){$\cdots$}

%\put(-5,-23){$x_{j-2}$}
\put(70,-23){$x_{j}$}
\put(155,-23){$x_{j+1}$}
%\put(270,-23){$x_{j-1/2}$}
\put(100,-3){$s^*$}
\put(130,-3){$x^*$}
\put(102,-10){$\underbrace{\hspace{1.03cm}}_{\delta}$}
\put(260,-3){$s^*$}
\put(290,-3){$x^*$}
\put(262,-10){$\underbrace{\hspace{1cm}}_{\delta}$}

\put(235,-23){$x_{j}$}
\put(315,-23){$x_{j+1}$}
%\put(395,-23){$x_{j+2}$}
%\put(475,-23){$x_{j+3}$}

%86    30     6    74   114

%\put(0,-10){\line(0,1){86}}
%\put(0,76){\circle{5}}
%\put(80,-10){\line(0,1){87}}
%\put(80,77){\circle{5}}
%\put(160,-10){\line(0,1){73}}
%\put(160,63){\circle{5}}
\put(240,-10){\line(0,1){55}}
\put(240,45){\circle{5}}
\put(320,-10){\line(0,1){50}}
\put(320,40){\circle{5}}
%\put(400,-10){\line(0,1){57}}
%\put(400,47){\circle{5}}
%\put(480,-10){\line(0,1){66}}
%\put(480,56){\circle{5}}

%polinomio izquierda
\linethickness{0.5pt}
\qbezier(60,70)(100,50)(132,55)
\qbezier(132,25)(150,16)(190,15)
\qbezier[15](102,45)(120,30)(132,25)

\qbezier(230,48)(250,43)(290,35)
\qbezier[40](240,44)(250,42)(263,10)
%polinomio derecha
\qbezier(290,35)(320,40)(330,43)
\linethickness{0.5pt}
\qbezier[40](263,10)(275,18)(330,43)
%\qbezier(260,5)(300,60)(540,50)
%lado mas
%\put(210,70){\circle{10}}
%\put(206,68){$-$}
%lado menos
%\put(350,70){\circle{10}}
%\put(346,68){$+$}

\put(100,18){$f(x)$}
\put(100,63){$g(x)$}

\put(270,8){$F(x)$}
\put(270,43){$G(x)$}
\end{picture}
}
\end{center}
\caption{In this figure we represent a jump discontinuity in the interval $(x_{j}, x_{j+1})$, that transforms into a discontinuity in the first derivative for the primitive. The approximated location of the discontinuity is labeled as $x^*$ and the exact location is labeled as $s^*$. The exact function is represented by $f(x)$ and the approximation is represented by $g(x)$. The exact primitive is represented by $F(x)$ and the approximation is represented by $G(x)$. }\label{disc_centro2}
\end{figure}
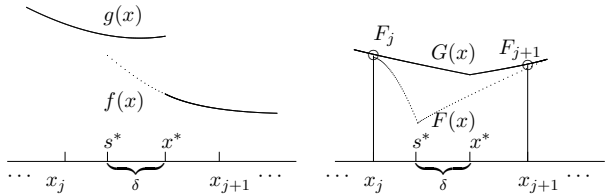

\begin{remark}
Inside the interval between $s^*$ and $x^*$, as mentioned in Section 7 of \cite{ACDD}, we can not hope to obtain a better accuracy than $O(1)$ in the infinity norm. Even though, we can provide a result in the $L^1$ norm (in fact, this result is provided for the $L^p$ norm in Section 7 of \cite{ACDD}). We use the inequality
$$||f^{\infty}- f||_{L^1}\leq ||f^{\infty} - g||_{L^1}+||g- f||_{L^1},$$
where $f^{\infty}$ is the limit function in the cell-averages and $g$ is the smooth function obtained by our algorithm plus the final translation, which has the same cell-averages as the function $f$ and has the discontinuity at $x^*$. The first term of the right hand side of the inequality is controlled by the order of the linear subdivision scheme (as the correction do not perturb the order) and the second term is controlled using the results in Section 7 of \cite{ACDD}.
\end{remark}

\section{Numerical results for the case of point-values sampling}\label{numexp_point}

In the experiments presented in this section we have calculated a high accuracy approximation of the position of the discontinuity using the technique described in the introduction. We have also computed high order approximations of the jumps in the function and its derivatives using the process described in Section \ref{calc_disc}.

Let's start by the function,
\begin{equation}\label{exp1}
f(x)=\left\{\begin{array}{ll}
a+\left(x-\frac{\pi}{6}\right)\left(x-\frac{\pi}{6}-10\right)+x^2+\sin(10x), & \textrm{if } x< \frac{\pi}{6}\\
x^2+\sin(10x),& \textrm{if } x\ge \frac{\pi}{6},
\end{array}\right.
\end{equation}
$x\in[0, 1]$. 

It is easy to check that, for $a=0$, the jump in the function at $x=\frac{\pi}{6}$ is $[f(x=\pi/6)]=0$, the jump in the first derivative is $[f'(x=\pi/6)]=10$ and in the second derivative is $[f''(x=\pi/6)]=-2$.

We compare the performance of three algorithms: One is the linear algorithm which is just the application of the 4-points subdivision to the data. The second is the quasi-linear ENO-SR discussed in \cite{ACDD} which uses special subdivision rules near the singularity. The third is the regularization-correction algorithm suggested here, which is called in short the RC or the corrected algorithm.
In Figure \ref{exp_point} we present the result obtained by the linear algorithm (left), the quasi-linear ENO-SR scheme (center) and the RC algorithm (right). In Figure \ref{exp_point_zoom} we present a zoom around the singularity. In order to obtain these graphs we have started from 16 data points of the original function and we have performed 5 levels of subdivision. We represent the original function with a dotted line, the discretized data at the lower resolution (16 data points) with blank circles and the limit function with a dashed line. We can see that the results obtained by the quasi-linear algorithm and the RC algorithm are very similar. Even though, as we will see in Subsection \ref{regupoint}, the regularity of the limit function obtained by the quasi-linear algorithm is lower than the one obtained by the RC approach.

Now we apply the algorithm to a function with more than one discontinuity. For example, the function
%f=[salto+(x1-pi/12).*(x1-pi/12-epsil)+x1.^2+sin(10*x1) x2.^2+sin(10*x2) salto2+(x3-3*pi/12).*(x3-3*pi/12-epsil2)+x3.^2+sin(10*x3)];
\begin{equation}\label{exp2}
f(x)=\left\{\begin{array}{ll}
\left(x-\frac{\pi}{12}\right)\left(x-\frac{\pi}{12}-10\right)+x^2+\sin(10x), & \textrm{if } x< \frac{\pi}{6}\\
x^2+\sin(10x),& \textrm{if } \frac{\pi}{12}\le x< \frac{3\pi}{12},\\
(x-\frac{3\pi}{12})(x-\frac{3\pi}{12}-5)+x^2+\sin(10x),& \textrm{if }  x\ge\frac{3\pi}{12},
\end{array}\right.
\end{equation}
that presents two discontinuities. In Figure \ref{exp_point2} we present the result obtained when applying the RC algorithm to the function in (\ref{exp2}). We can see that the algorithm can deal with more than one discontinuity with no problem, just applying steps 1 and 3 of Section \ref{nuevo} as many times as discontinuities are detected.

%Gr�ficas: /Users/juan/Documents/articulos_a_medias/Levin/ENO_SR_point/regularidad_ENOsr_articulo.m
%/Users/juan/Documents/articulos_a_medias/Levin/regularidad_articulo_calculando_disc_y_saltos.m
%poner breakpoint en la linea 139
%Para el lineal cambiar linea 88 y 136
\begin{figure}[!ht]
\centerline{\psfig{figure=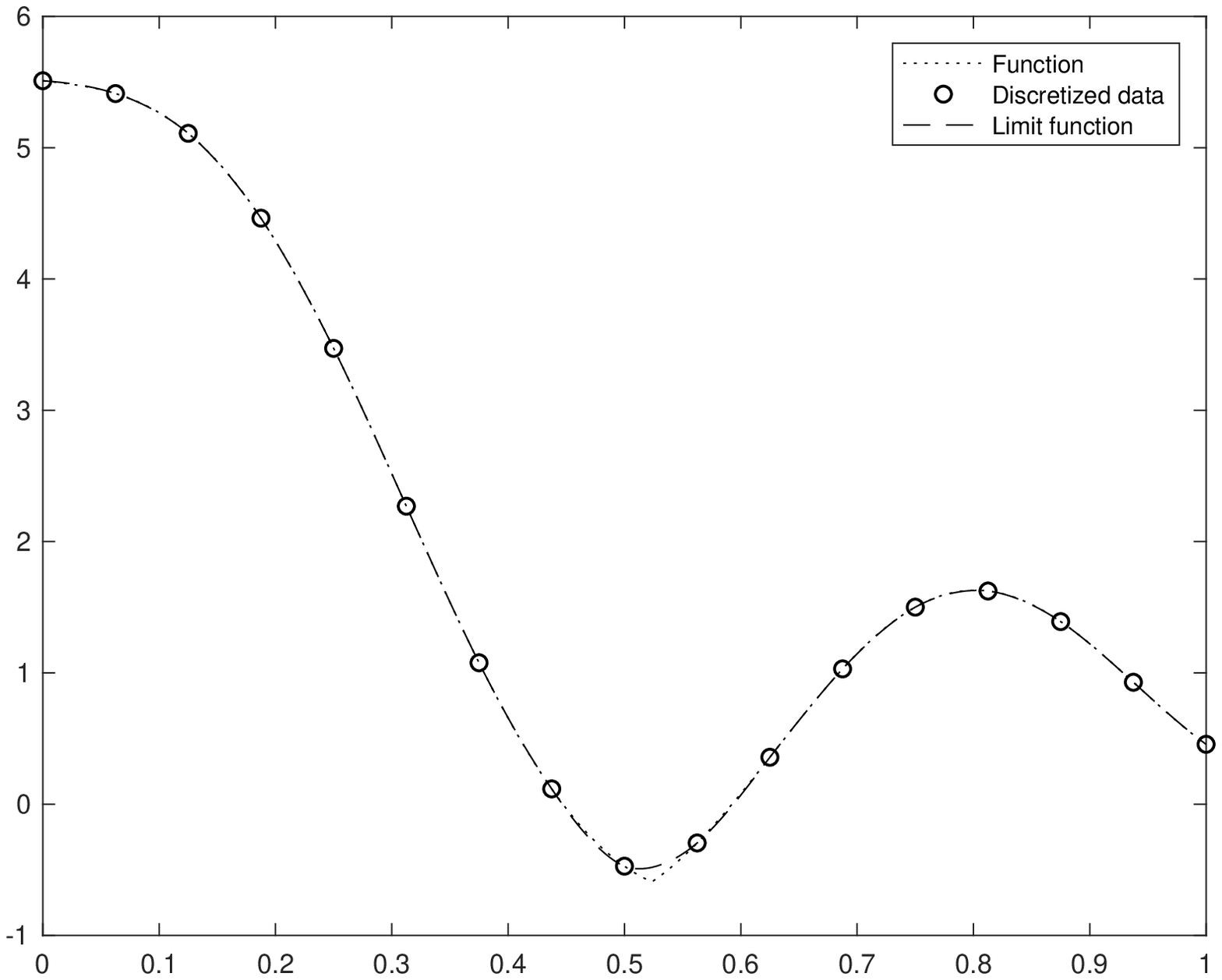,height=4cm}\\
\psfig{figure=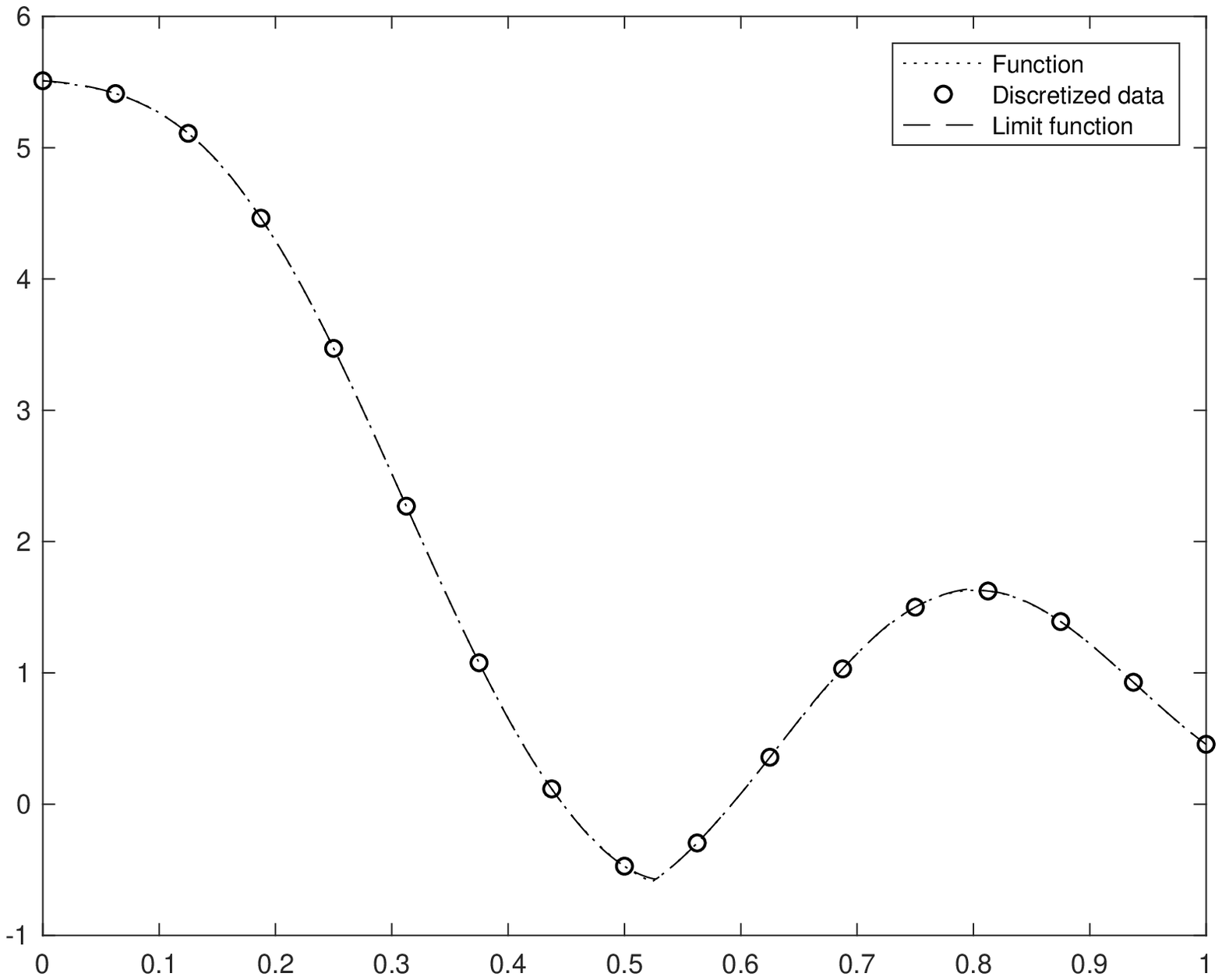,height=4cm}\\
\psfig{figure=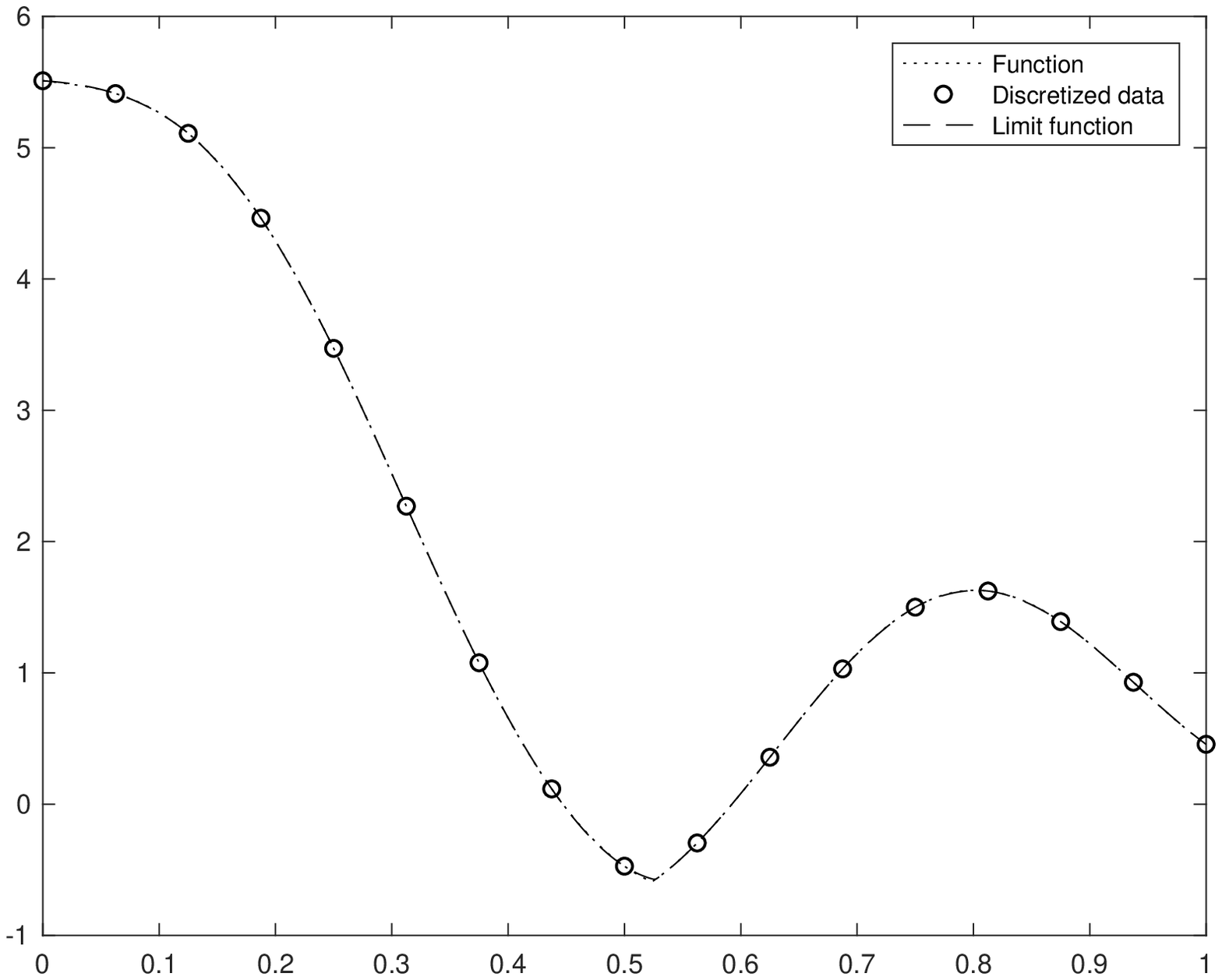,height=4cm}}
\caption{Limit function obtained by the linear algorithm (left), the quasi-linear algorithm (center) and the RC algorithm (right). In order to obtain these graphs we have started from 16 initial data points of the function in (\ref{exp1}).}\label{exp_point}
\end{figure}

\begin{figure}[!ht]
\centerline{\psfig{figure=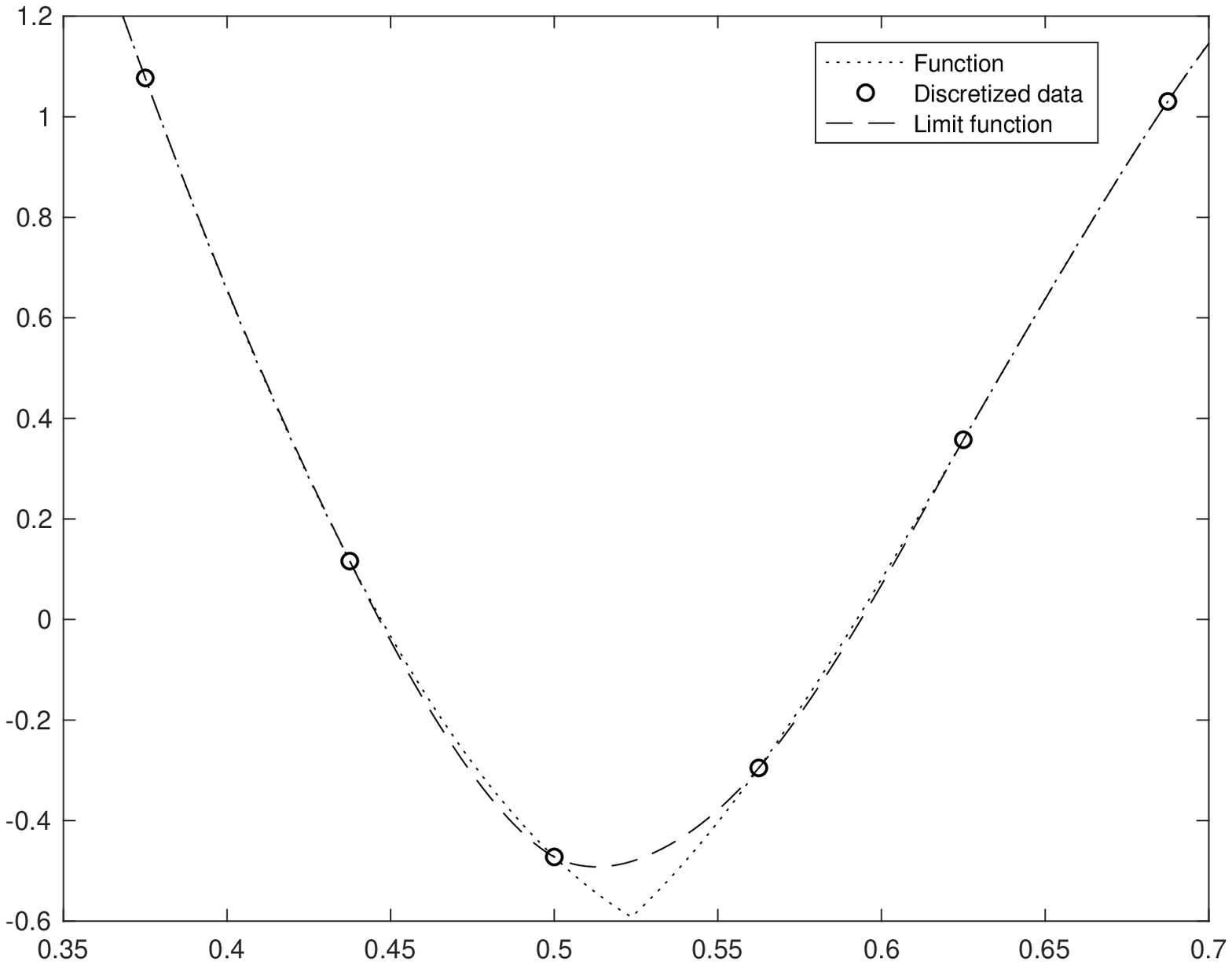,height=4cm}\\
\psfig{figure=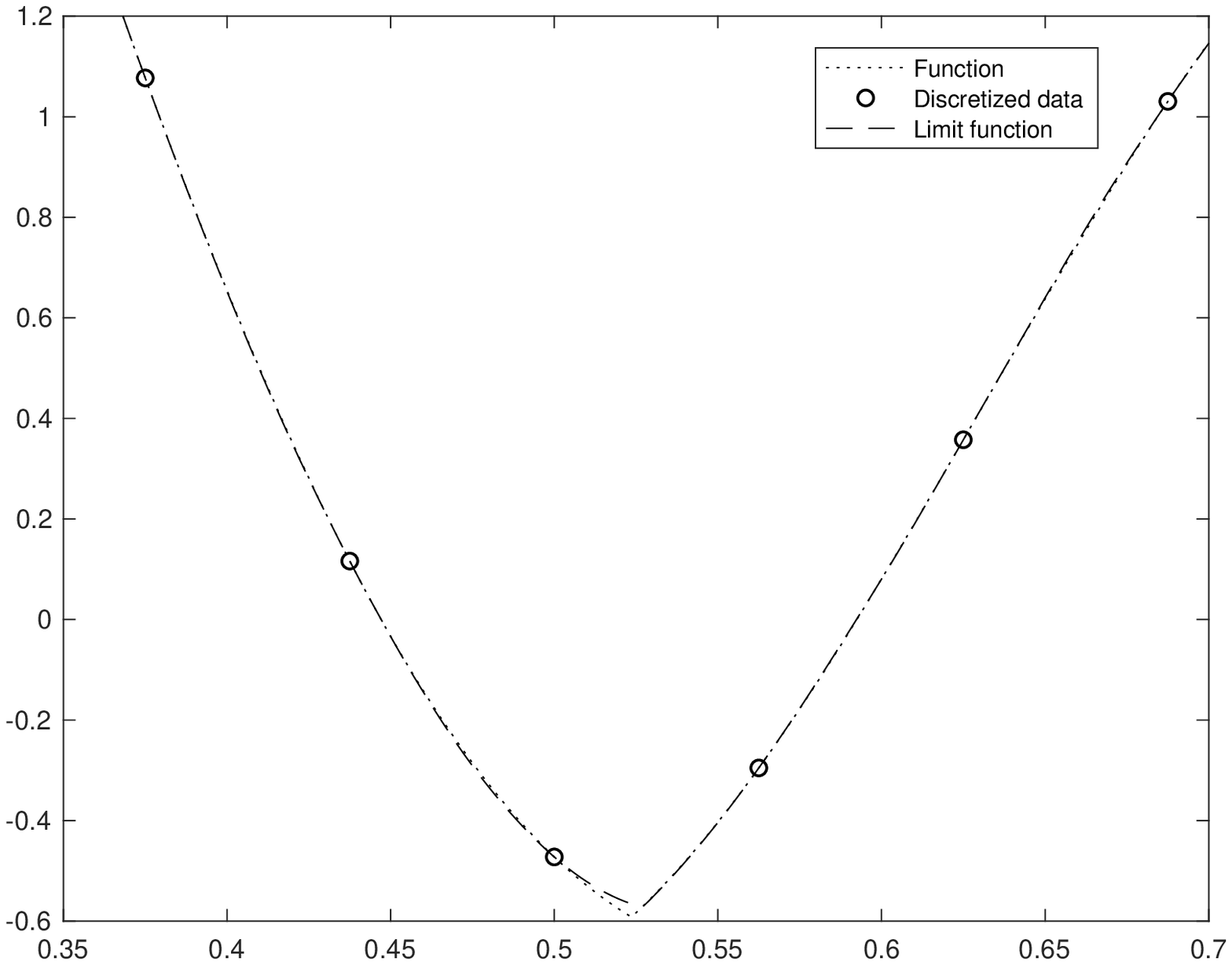,height=4cm}\\
\psfig{figure=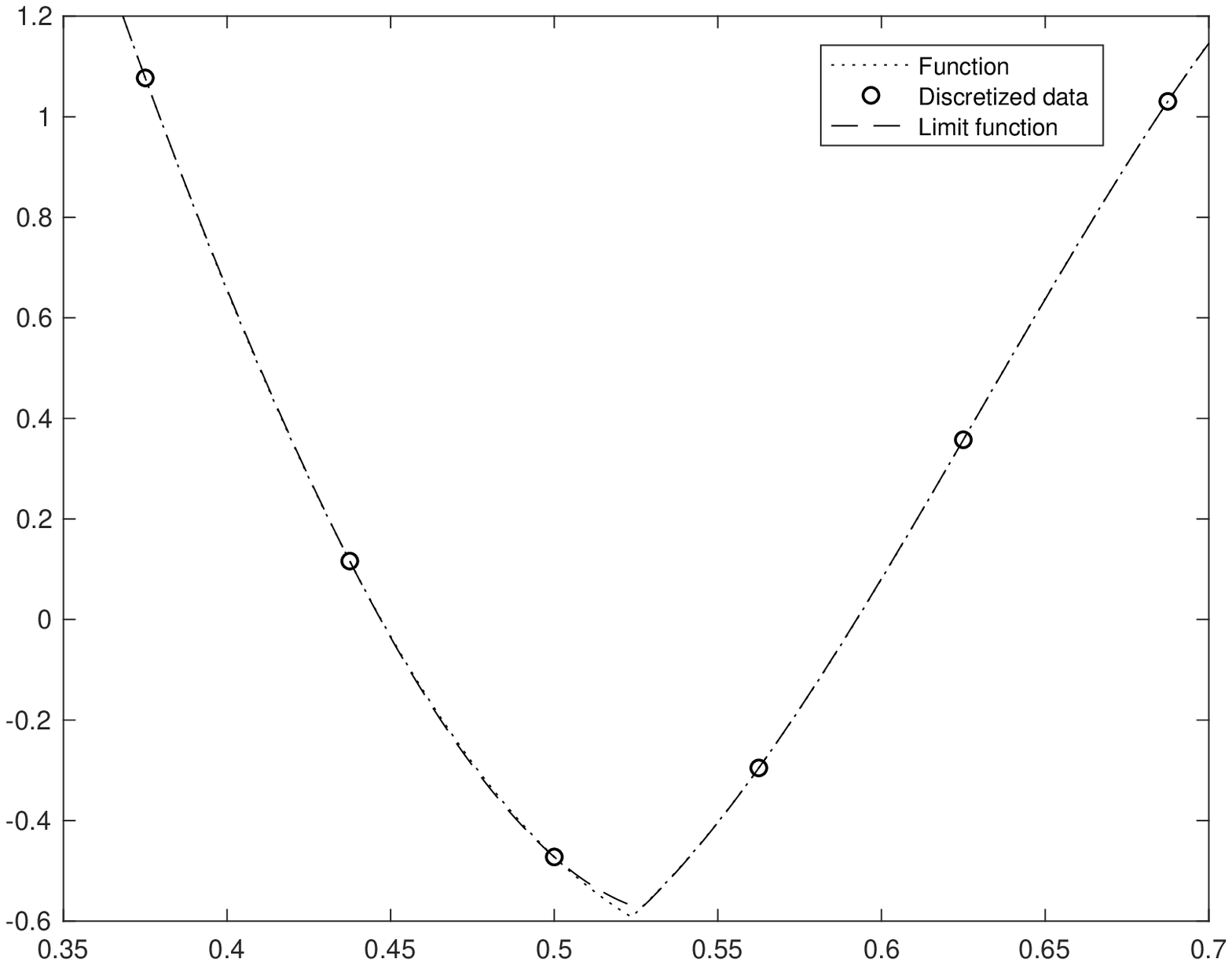,height=4cm}}
\caption{Zoom of the limit functions shown in Figure \ref{exp_point} obtained by the linear algorithm (left), the quasi-linear algorithm (center) and the RC algorithm (right).}\label{exp_point_zoom}
\end{figure}

\begin{figure}[!ht]
%/Users/juan/Documents/articulos_a_medias/Levin/saltos_point_discontinuidad_en_el_medio/regularidad_articulo_calculando_disc_y_saltos.m/prueba_varias_discontinuidades.m
\centerline{\psfig{figure=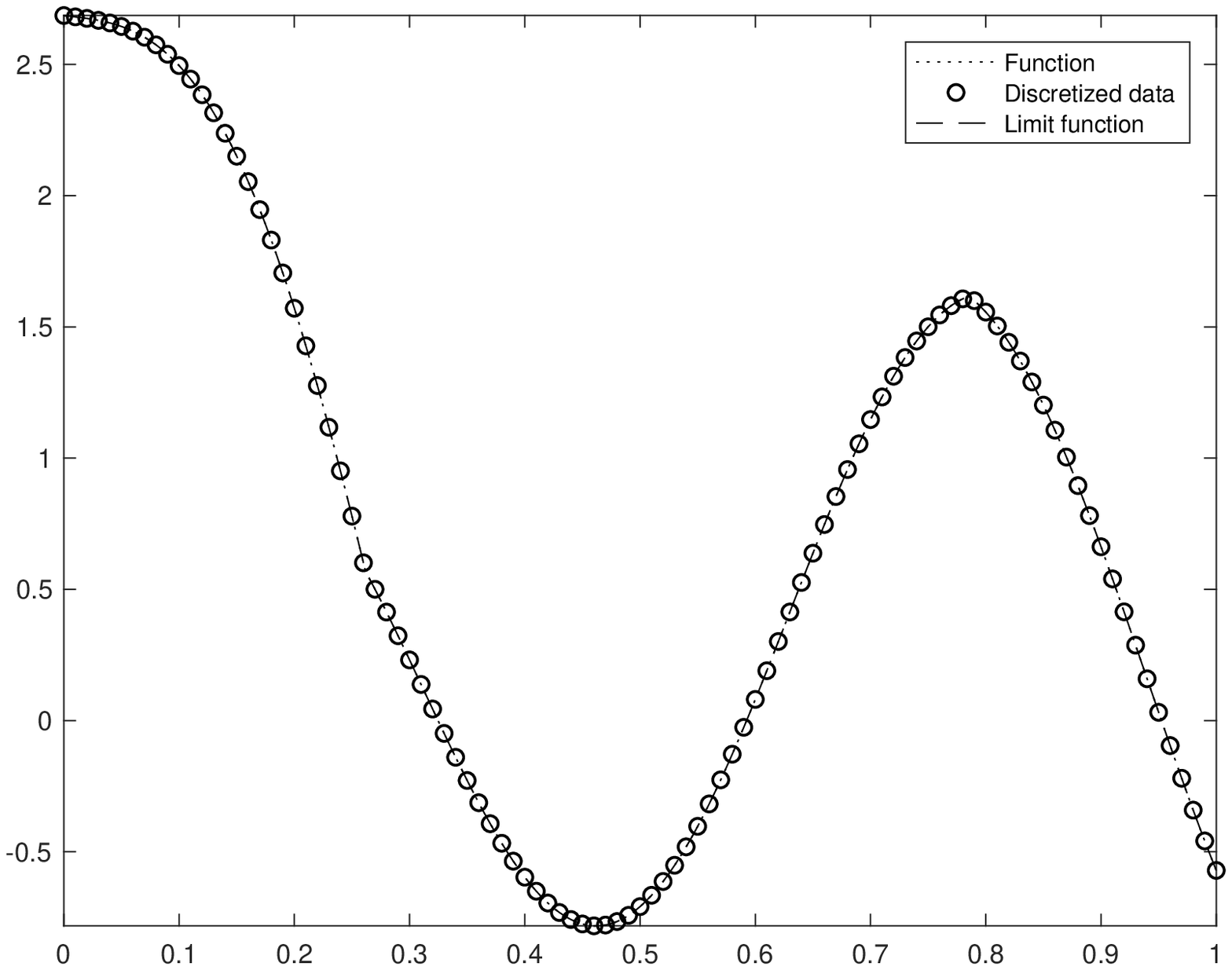,height=4cm}\\
\psfig{figure=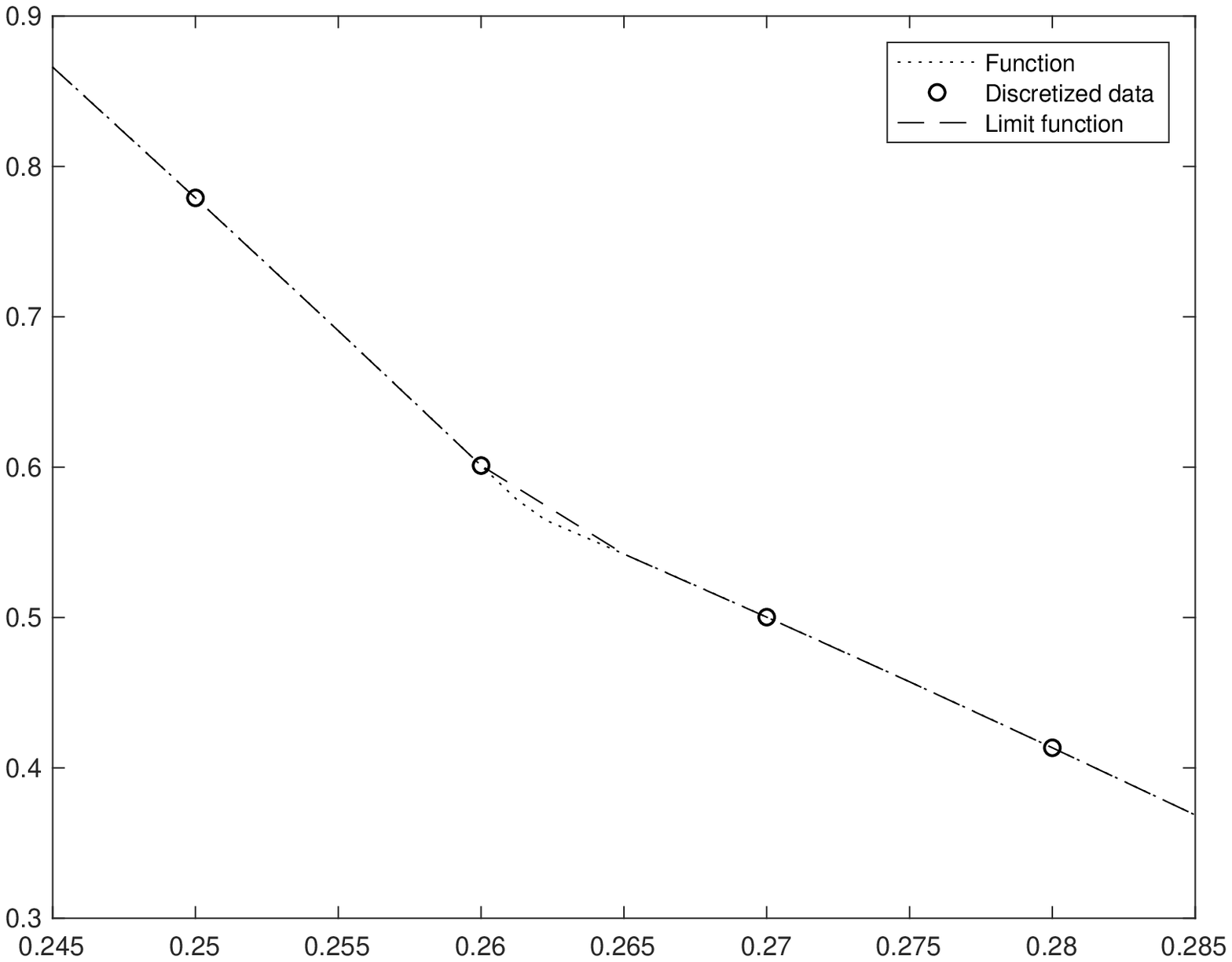,height=4cm}\\
\psfig{figure=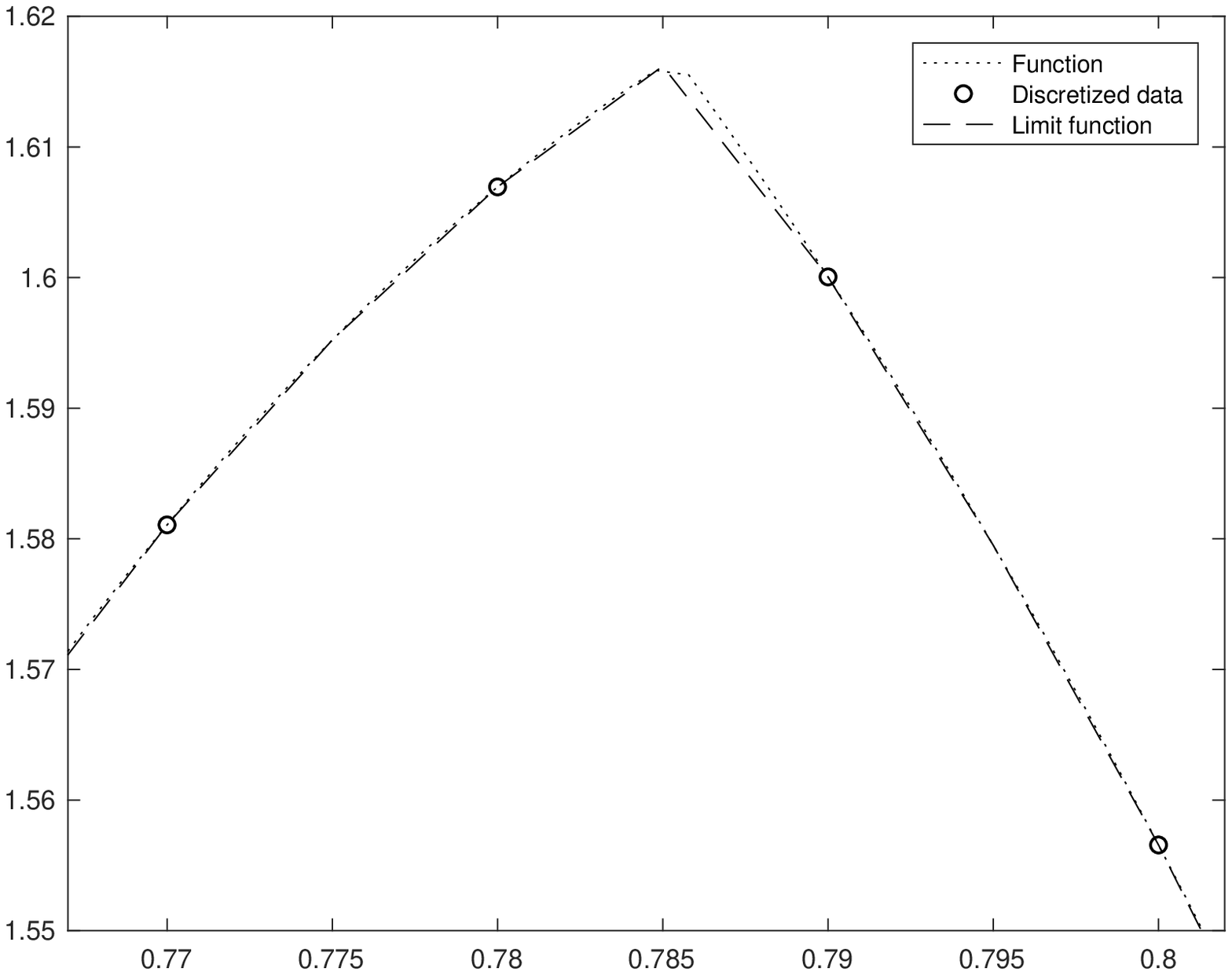,height=4cm}}
\caption{Limit function obtained by the RC algorithm after five levels of subdivision using as initial data the piecewise smooth function in (\ref{exp2}) that presents two singularities.}\label{exp_point2}
\end{figure}

\subsection{Jump discontinuities in the point-values sampling case}\label{section_jumps}

We have previously mentioned that it is not possible to locate the position of a discontinuity in the function using a discretization by point-values. Although this is true, it is indeed possible to locate the interval of length $h$ that contains the discontinuity. Our objective is to obtain subdivided data that keeps the regularity of the linear subdivision scheme applied, that does not present diffusion nor Gibbs phenomenon and that have optimal accuracy close to the discontinuities. Then, as argued in Theorem \ref{Theorem1}, we just assume that the discontinuity is placed at the middle of the interval. Let's apply the RC algorithm to data obtained from the sampling of the function,
\begin{equation}\label{exp4}
f(x)=\left\{\begin{array}{ll}
\left(x-\frac{\pi}{12}\right)\left(x-\frac{\pi}{12}-10\right)+x^2+\sin(10x)+1, & \textrm{if } x< \frac{\pi}{6},\\
x^2+\sin(10x),& \textrm{if } \frac{\pi}{12}\le x< \frac{3\pi}{12},\\
(x-\frac{3\pi}{12})(x-\frac{3\pi}{12}-5)+x^2+\sin(10x)+2,& \textrm{if }  x\ge\frac{3\pi}{12}.
\end{array}\right.
\end{equation}
Figure \ref{exp_point_zoom2} presents the result obtained after five levels of subdivision using $100$ initial points. As expected, we do not obtain Gibbs phenomenon nor diffusion. It is important to mention that in Figure \ref{exp_point_zoom2} we have plotted the limit function and the original data with continuous lines to point out the position of the jump in the function, but no data or subdivided data is placed in the middle of the jump.

In the following subsections we will check the regularity and the accuracy obtained using the RC algorithm.

\begin{figure}[!ht]
%/Users/juan/Documents/articulos_a_medias/Levin/saltos_point_discontinuidad_en_el_medio/regularidad_articulo_calculando_disc_y_saltos.m
\centerline{\psfig{figure=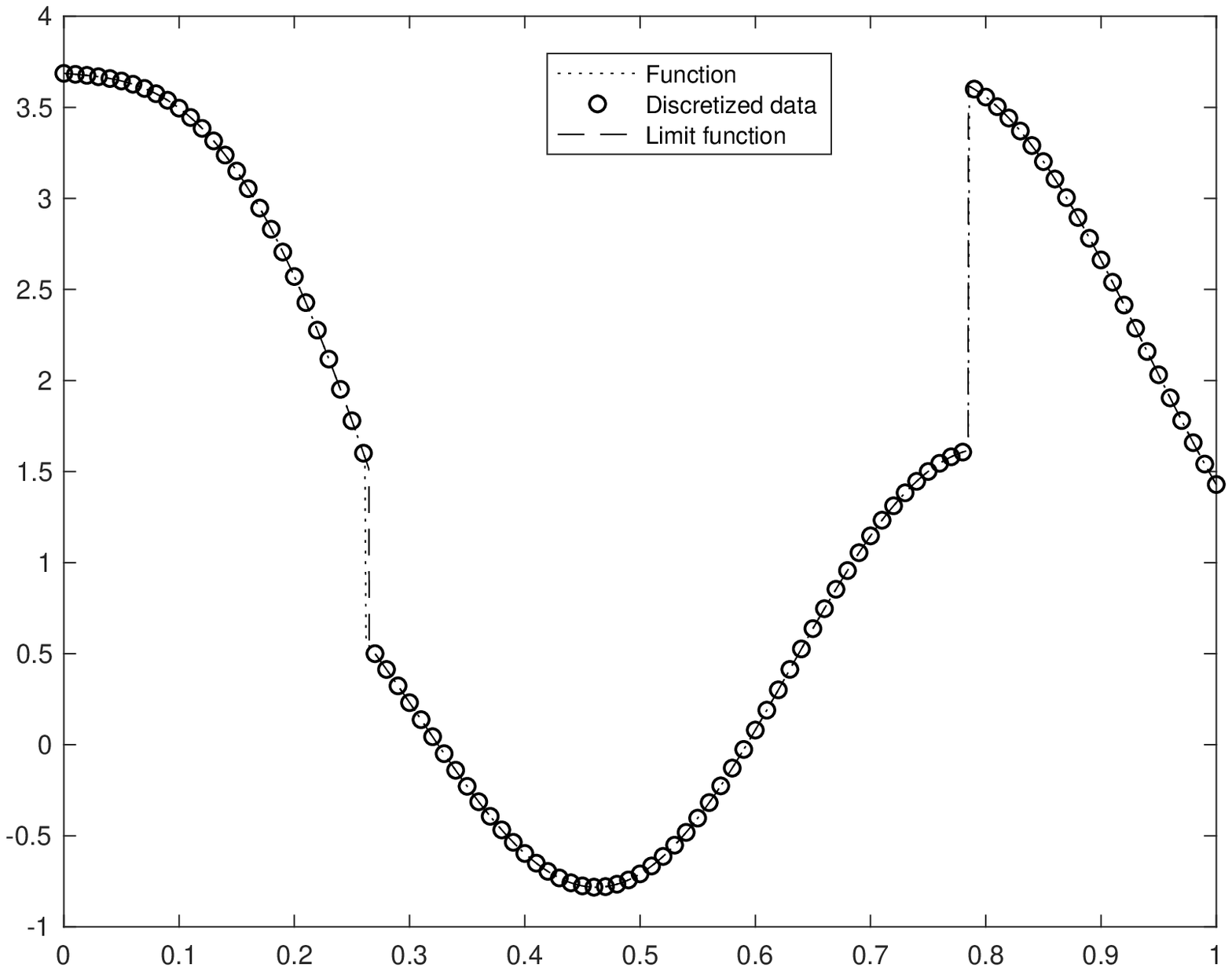,height=4cm}\\
\psfig{figure=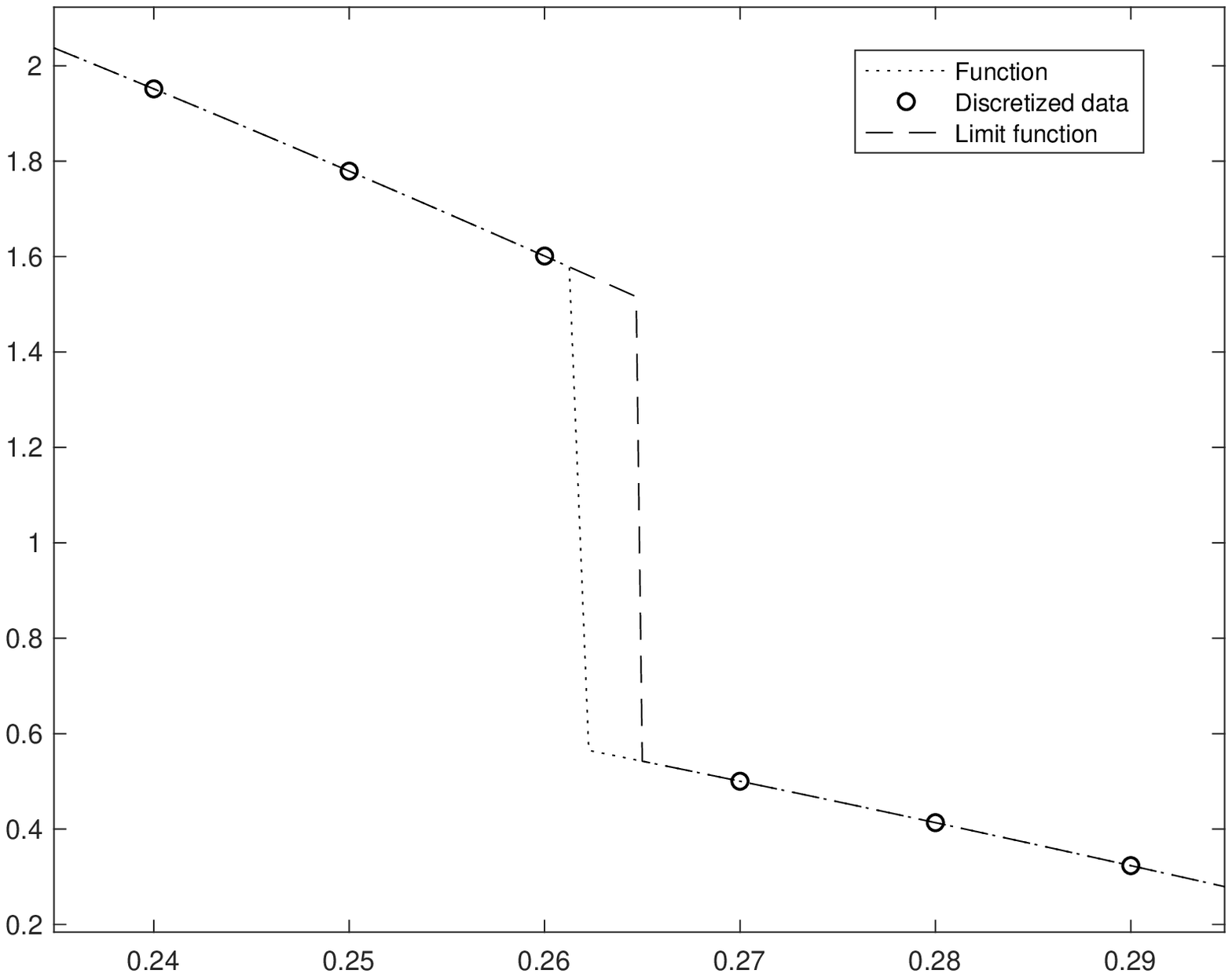,height=4cm}\\
\psfig{figure=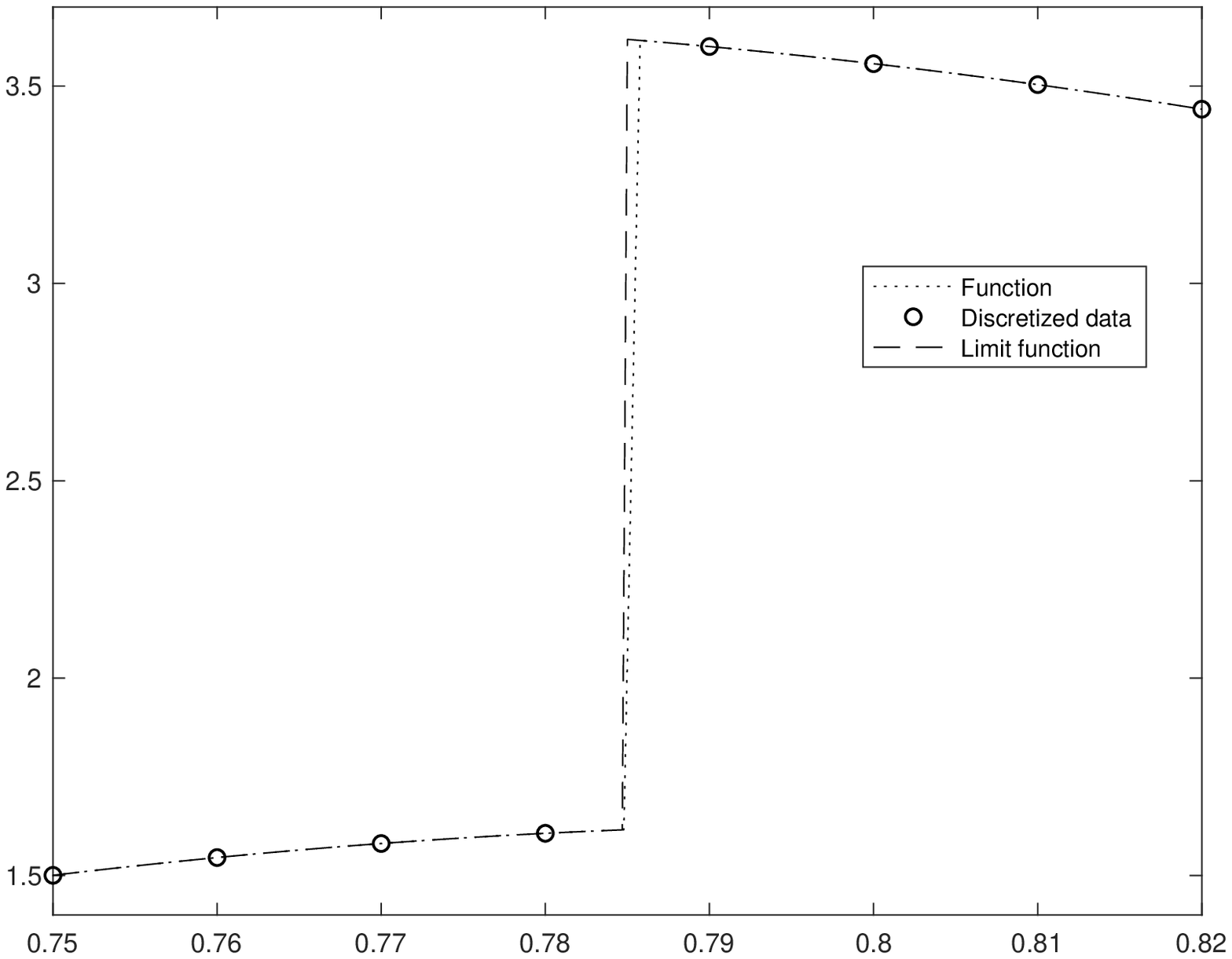,height=4cm}}
\caption{Limit function obtained after 5 levels of subdivision using the RC algorithm for a piecewise continuous function. The initial data has 100 points.}\label{exp_point_zoom2}
\end{figure}

\subsection{Numerical Regularity}\label{regupoint}

%Following \cite{Kuijt98},  the regularity of a limit function can be evaluated  numerically.
%Using $S_1$ and $S_2$,  the subdivision schemes for the differences of order $k=1$ and
%$k=2$ associated to the subdivision scheme $S_M$, the following quantities are  estimated for $k=1,2$ and different values of the spatial index $j$:
%  $$\beta_k=-\log_2\left(2^k\frac{||(S^{j+1}_kf)_{n+1}-(S^{j+1}_kf)_n ||_\infty}
%  {||(S^{j}_kf)_{n+1}-(S^{j}_kf)_n
%||_\infty} \right ).$$ This expression provides an estimate for $\beta_1$ and $\beta_2$ such that the limit functions belong to $C^{1+\beta_1-}$ and $C^{2+\beta_2-}$.

Following \cite{Kuijt98},  the regularity of a limit function of a subdivision process can be evaluated  numerically using the values $\{f^L_n\}$ obtained at subdivision level $j$ as
$$\beta_k=-\log_2\left(2^k\frac{||\Delta^{k+1}f^{L+1}_n ||_\infty}
{||\Delta^{k+1}f^{L}_n||_\infty} \right ).$$ This expression provides an estimate for $\beta_1$ and $\beta_2$ such that the limit functions belong to $C^{1+\beta_1-}$ and $C^{2+\beta_2-}$.

Let's consider the regularity of the limit function obtained when subdividing data acquired through a point-values discretization of the function (\ref{exp1}). In Table \ref{table_exp1} we present numerical estimations of the regularity constant of the linear algorithm, the quasi-linear algorithm and the RC algorithm. To obtain this table, we start from $100$ initial data points and we subdivide from $L=5$ to $L=10$ levels of subdivision in order to obtain an approximation of the limit function. We measure the numerical regularity for $x<\frac{\pi}{6}$ assuring that the singularity is not contained in the data. From this table we can see that the numerical estimate of the re\-gularity for the RC algorithm is  very close to the one obtained by
the linear scheme. The quasi-linear scheme clearly is less regular.

\begin{table}[ht!]
%en todos seleccionamos n=100
%Programas en:/Users/juan/Documents/articulos_a_medias/Levin/regularidad_articulo_calculando_disc_y_saltos.m   (para lineal y nuevo. para lineal cambiar l�nea 34)
%Programas en:/Users/juan/Documents/articulos_a_medias/Levin/ENO_SR_point/regularidad_ENOsr_articulo.m (con lineal=0;)

%copiados de: ( /Users/juan/Documents/MATLAB/articulo_binary_Dioni/binary_Dyn_Floater_Karine/muchas_medias/muchas_medias_dioni/numerical_regularity_constant.m)
\begin{center}
\resizebox{13cm}{!}{
\begin{tabular}{|c|c|c|c|c|c|c|c|c|}
\hline
&$L$ &  5 & 6&  7   & 8  & 9 & 10 \\
\hline
\multirow{ 3}{*}{$\beta_1$}
&Linear& 0.8117    &  0.8335  &   0.8507  &   0.8647   &  0.8763  &   {\bf 0.8861} \\\cline{2-8}
&Quasi-linear& -6.0754   &    -0.9273   &   0.03565   &    0.0723    &   0.1556    &   {\bf 0.3724}\\\cline{2-8}
&RC&0.9967  &    0.9983  &    0.9992  &    0.9996  &    0.9998 &     {\bf  0.9999}  \\\cline{2-8}\hline
%0.99651  &   0.99651  &   0.99652  &   0.99653    & 0.99654   &  0.99655
 \multirow{ 3}{*}{$\beta_2$}
&Linear& 0.0167  & 0.0084  &  0.0042  & 0.0021  &  0.0011 & {\bf 0.0005} \\\cline{2-8}
&Quasi-linear& -9.5160  &     -1.9331   &   -0.9654   &   -0.9281  &    -0.8446   &  {\bf  -0.6277} \\\cline{2-8}
&RC& 0.5414  &    0.2706 &     0.1156 &    0.0491  &   0.0227  &   {\bf 0.0103} \\\cline{2-8}
%0.44048 &    0.17048 &   0.049667  &  0.013025  & 0.0032794 & 0.00082901
\hline
\end{tabular}
}
\end{center}
\caption{ Numerical estimation of the limit functions regularity $C^{1+\beta_1-}$ and $C^{2+\beta_2-}$ for the different schemes presented and the function in (\ref{exp1}). }
 \label{table_exp1}
\end{table}

\subsection{Grid refinement analysis in the point-values sampling case}

In this subsection we present an experiment oriented to check the order of accuracy of the schemes presented. In order to do this, we check the error of interpolation in the infinity norm obtained in the whole domain and then we perform a grid refinement analysis. We define the order of accuracy of the reconstruction as,
$$order_{k+1}=log_2\left(\frac{E^k_\infty}{E^{k+1}_\infty}\right),$$
$E_\infty^k$ being the $\ell_\infty$ error obtained using data with a grid spacing $h_k=N_k^{-1}$ and $E_\infty^{k+1}$ the error obtained with a grid spacing $h_k/2$.

%We use the discrete $l_{\infty}$ norm,
%\begin{equation*}
%||f^k||_{\infty}= \max_{j=0, \cdots, N_k}\{|f_j^k|\}.
%\end{equation*}

For point-values data at the poins $\{x_j=jN_k^{-1}\}_{j=0, \cdots, N_k}$, we estimate $E^k_\infty$ using the values of the test function and its approximation on a mesh refined by a factor of $2^{-10}$.
Table \ref{precision_point} presents the results obtained by the three algorithms for the test function in (\ref{exp1}) with $a=0$. We can see how the linear algorithm losses the accuracy due to the presence of the discontinuity, while the quasi-linear algorithm and the RC approach keep high order of accuracy in the whole domain.

\begin{table}[!ht]
\begin{center}
\resizebox{10cm}{!} {
\begin{tabular}{|c|c|c|c|c|c|c|c|c|c|c|c|c|c|c|}
\hline
&\multicolumn{2}{|c|}{RC algorithm}&\multicolumn{2}{|c|}{Linear}&\multicolumn{2}{|c|}{Quasi-linear}\\
\hline $N_k$ &$E^k_{\infty}$ & $order_k$&$E^k_{\infty}$ & $order_k$&$E^k_{\infty}$ & $order_k$
%              \\
%\hline  32& 4.163e-03  &-&8.1638e-02  &-&1.1838e-02&-
            \\
\hline  16&   2.3041e-02  &-&1.1052e-01 &-&2.6140e-02& -
            \\
\hline  32&     5.3611e-03  &2.1036 & 4.0630e-02  &1.4437  & 5.3611e-03&2.2856
            \\
\hline  64&       1.6162e-04  &   5.0518    &2.9258e-02  &0.4737 &  1.6164e-04  &5.0516
            \\
\hline  128&     2.7694e-05   & 2.5450   & 5.2048e-03 & 2.4909& 2.7694e-05&2.5452
            \\
\hline  256&     1.7574e-06   & 3.9780   & 2.5469e-03  & 1.0311 &  1.7574e-06  & 3.9780
            \\
\hline  512&    1.0309e-07   & 4.0916  & 1.2169e-03&  1.0655& 1.0309e-07  &4.0916
            \\
\hline  1024 &   5.3956e-09   & 4.2559   & 8.9145e-04  & 0.4490 &5.3966e-09  &4.2557
            \\
\hline  2048 &    2.2313e-10   & {\bf 4.5958}  &7.8471e-04  & {\bf 0.1840} &2.2408e-10  &{\bf 4.5899}
            \\
%Levin
%   1.3713e-11  8.9861e-13  5.0626e-14  3.7748e-15
%  4.0243  3.9317  4.1497  3.7454
%Lineal
%3.9879e-04  1.9286e-04  1.0273e-04  4.4363e-05
% 0.9765  1.0481  0.9086  1.2115
%ENO
%1.3768e-11  9.0317e-13  5.1514e-14  7.1054e-15
%4.0247  3.9302  4.1319  2.8580
\hline
\end{tabular}
}
\caption{Grid refinement analysis in the $l^{\infty}$ norm for the function in (\ref{exp1}) for the three algorithms.}\label{precision_point}
\end{center}
\end{table}

It is important to remark that for any initial data obtained from a piecewise polynomial function of degree smaller or equal than three, the RC scheme and the quasi-linear method achieve exact approximations within machine precision.

We also check the accuracy attained by the RC scheme when working with piecewise continuous functions as the ones analyzed in Subsection \ref{section_jumps}. For this experiment, the data at the resolution $k$ has been obtained through the sampling of the function in (\ref{exp1}) with $a=10$, that is a piecewise conti\-nuous function with a jump discontinuity of size $a$ at $x=\frac{\pi}{6}$. The philosophy of this experiment is the one explained in Subsection \ref{section_jumps}: as the exact position of the discontinuity is lost when discretizing a function by point-values, we consider that the discontinuity is placed at the middle of the suspicious intervals. Then, in order to obtain the error and the order of accuracy, we compare with the original function but with the discontinuity placed in the middle of the suspicious interval. The results are presented in Table \ref{precision_point_salto} and we can see that the RC algorithm attains the maximum possible accuracy in the infinity norm.

%Programas subdivision 10 escalas en: /Users/juan/Documents/articulos_a_medias/Levin/saltos_point_discontinuidad_en_el_medio/refinamiento_malla_l5.m

\begin{table}[!ht]
\begin{center}
\resizebox{4cm}{!} {
\begin{tabular}{|c|c|c|c|c|c|c|c|c|c|c|c|c|c|c|}
\hline
&\multicolumn{2}{|c|}{RC algorithm}\\%&\multicolumn{2}{|c|}{Linear}&\multicolumn{2}{|c|}{Quasi-linear}\\
\hline $N_k$ &$E^k_{\infty}$ & $order_k$%&$E^k_{\infty}$ & $order_k$&$E^k_{\infty}$ & $order_k$
            \\
\hline  16&3.6320e-02& -
            \\
\hline  32& 2.5607e-03&3.8262
            \\
\hline  64&1.5596e-04& 4.0373
            \\
\hline  128&  9.1954e-06 & 4.0841
            \\
\hline  256&  5.6303e-07 & 4.0296
            \\
\hline  512&  3.4794e-08& 4.0163
            \\
\hline  1024&   2.1618e-09 & 4.0085
            \\
\hline  2048&  1.3470e-10&{\bf 4.0044 }
            \\
%Levin
%  8.4022e-12  5.2758e-13  3.2641e-14  8.8818e-15
% 4.0029  3.9933  4.0146  1.8777
%Lineal
%ENO
\hline
\end{tabular}
}
\caption{Grid refinement analysis in the $l^{\infty}$ norm for the RC scheme for the function in (\ref{exp1}), that in this case is a piecewise continuous function with a jump discontinuity in the function equal to $a=10$. In this case, as the exact position of the discontinuity is lost when discretizing a function through the point-values, we consider that the discontinuity of the function in (\ref{exp1}) is not placed at $\pi/6$ but at the middle of the suspicious interval.}\label{precision_point_salto}
\end{center}
\end{table}

\subsection{RC approximation of bivariate point-values  data}
Taking into account what has been explained in previous subsections, we can try to approximate piecewise smooth two dimensional functions. Let's consider for example the next bivariate function,
%m(ind_dentro)=cos(pi*x(ind_dentro)).*cos(pi*y(ind_dentro));
%m(ind_fuera)=1-cos(pi*x(ind_fuera)).*cos(pi*y(ind_fuera));
\begin{equation}\label{exp2D_point}
f(x)=\left\{\begin{array}{ll}
\cos(\pi x)\cos(\pi y), & \textrm{if } (x+\frac{1}{2})^2+(y-\frac{1}{2})^2< 1\\
1-\cos(\pi x)\sin(\pi y),& \textrm{if } (x+\frac{1}{2})^2+(y-\frac{1}{2})^2\ge 1,
\end{array}\right.
\end{equation}
that is displayed in Figure \ref{2D1}. 
The additional challenge here is locating and to approximating the singularity curve.
Assuming that the singularity curve is nowhere parallel to the $x$-axis, we can apply here the level set function approach presented in \cite{LevinFourier}.
%For the following example we suppose that we can reconstruct a level set function that allows us to locate the position of the discontinuity.

We can proceed as follows:
\begin{itemize}
\item Detect and locate the possible discontinuities using the rows data. Use this information to build an approximate signed-distance data from the unknown singularity curve. Now fit a spline surface $S(x,y)$ to this data, and construct the zero level set of this surface. This approach from \cite{LevinFourier} gives an $O(h)$ approximation to the singularity curve.
\item Obtain and store the one dimensional correction terms for all the rows.
\item Add the correction term to the rows and apply to the corrected data the tensor product linear 4-point subdivision algorithm to generate the first stage approximation $g(x,y)$.
%\item Subdivide the columns of the data that have been corrected in the $x$ direction.
\item Subdivide along columns the correction terms to obtain a smooth function representing the correction term $T(x,y)$ over the whole domain.
\item Use the level set function $S(x,y)$ to set $T(x,y)$ to zero to the right of the approximate singularity curve.
\item Subtract the masked correction term from $g(x,y)$.
\end{itemize}
Of course, in order to apply this technique successfully, it is necessary that discontinuities are far enough from each other and from the boundaries. Figure \ref{2D1} top to the left presents the original bivariate data sampled from the function in (\ref{exp2D_point}). Top to the right we present the resultant subdivided data obtained following the process described before. Bottom left we can observe the subdivided correction term, that is clearly smooth. Bottom to the right we can see the subdivided and masked correction term.
\begin{figure}[!ht]
%/Users/juan/Documents/articulos_a_medias/Levin/articulo_1D/2D/2D_point/articulo_2D_interfaz_curva.m
\centerline{\psfig{figure=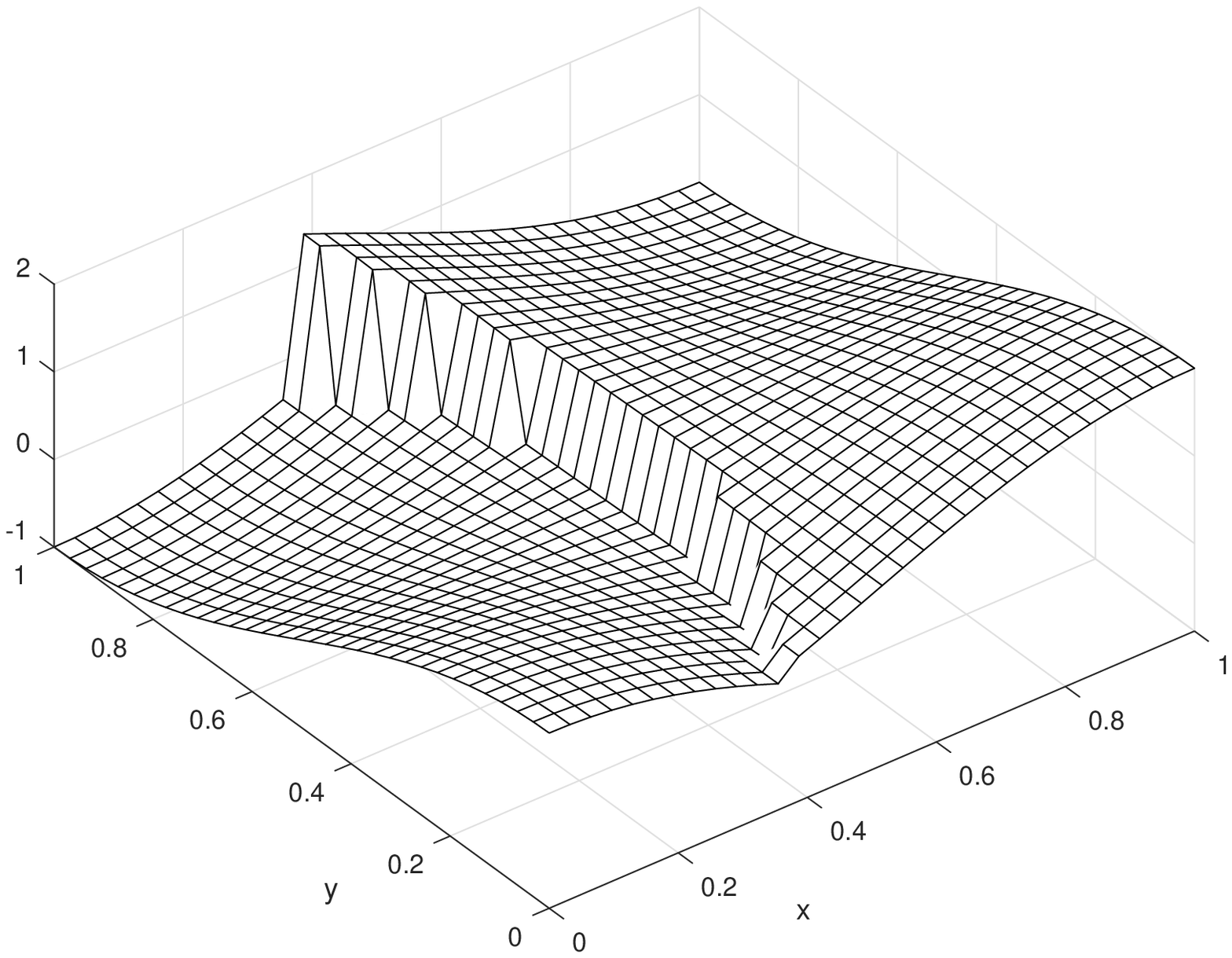,height=4cm}
\psfig{figure=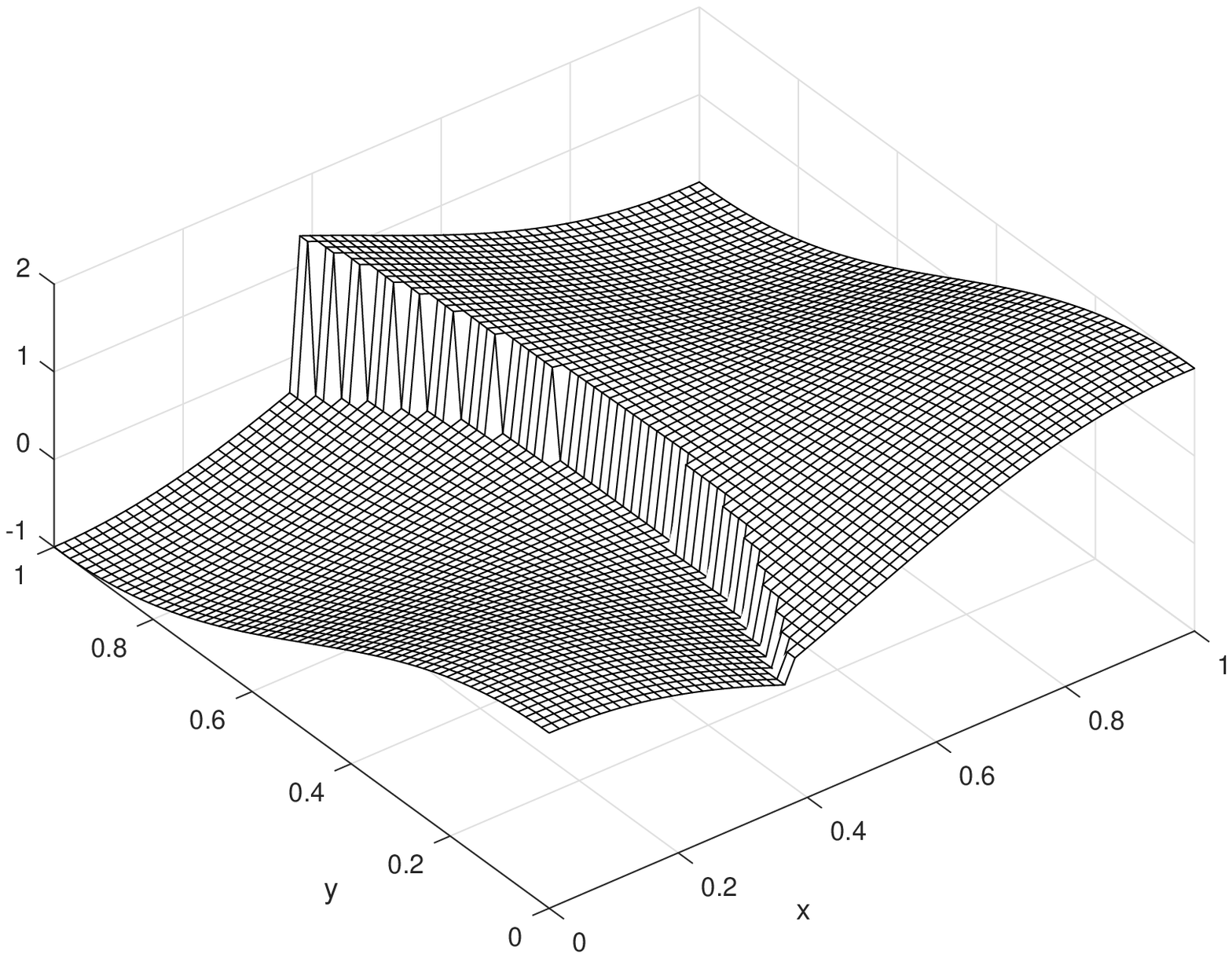,height=4cm}}
\centerline{\psfig{figure=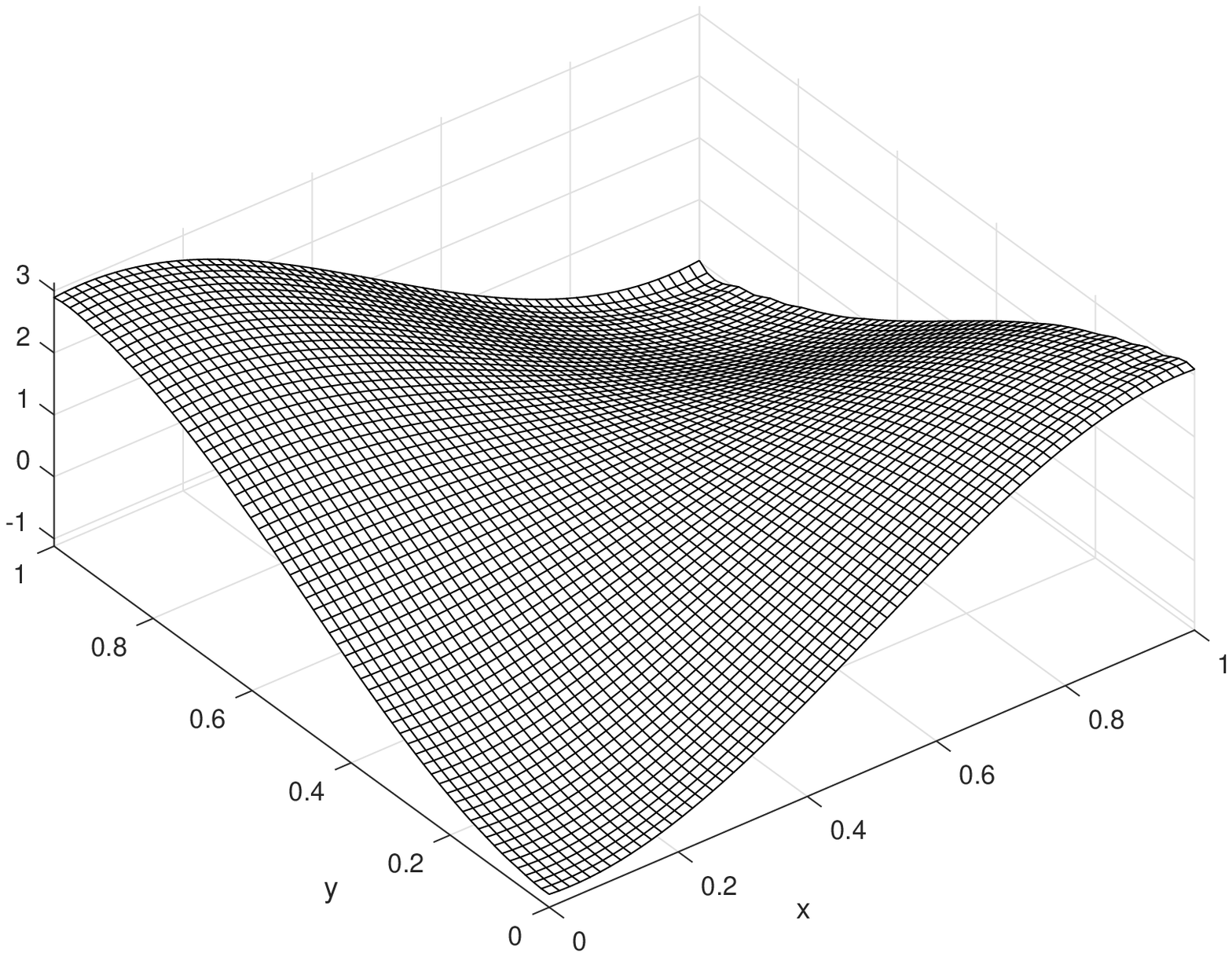,height=4cm}
\psfig{figure=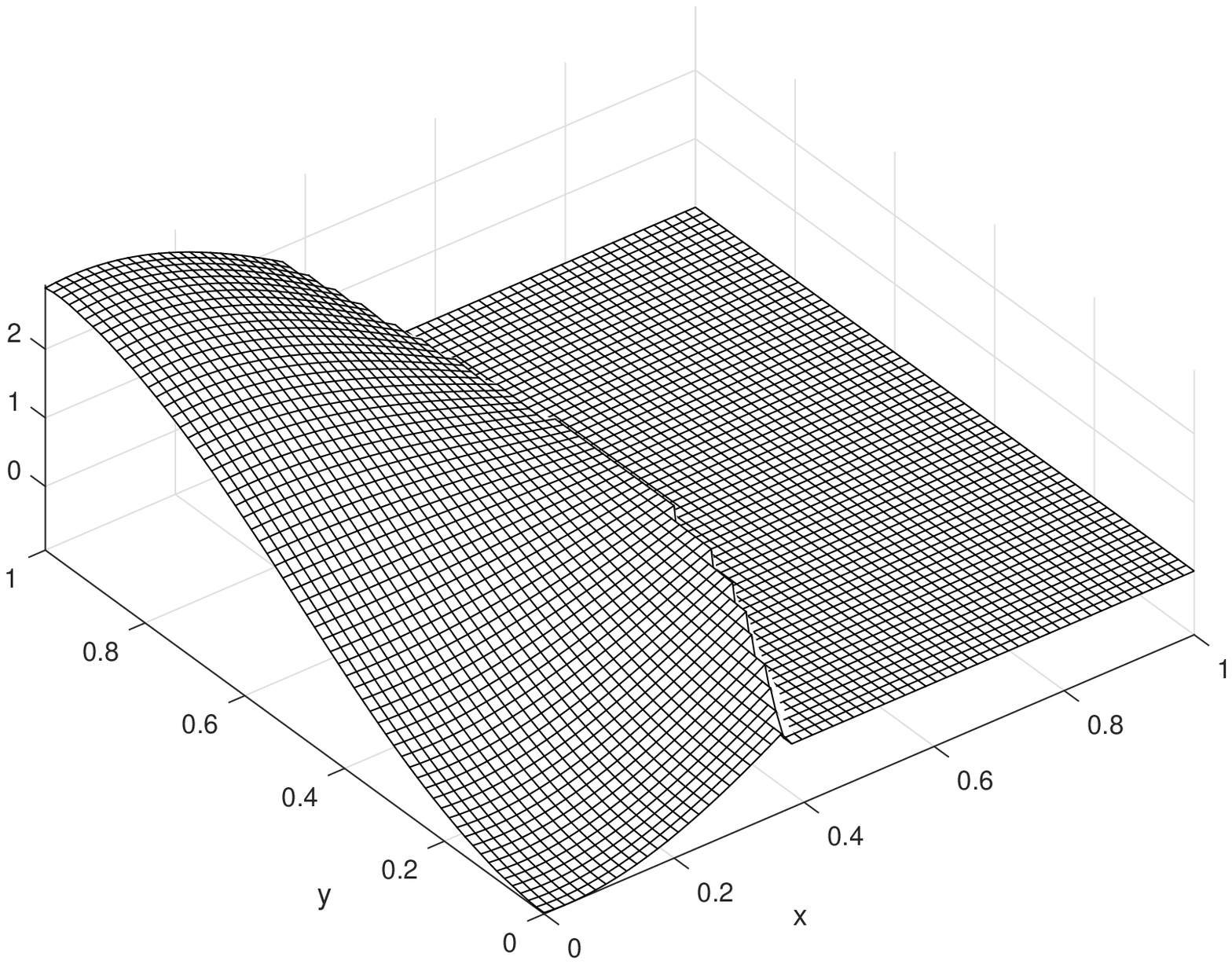,height=4cm}
}
\caption{Top to the left, plot of the function in (\ref{exp2D_point}). Top to the right, subdivided data. Bottom to the left subdivided correction term. Bottom to the right masked subdivided correction term.}\label{2D1}
\end{figure}

\section{Numerical results for the case of cell-averages data}\label{numexp_cell}

In this section we work with piecewise continuous functions supposing that the data is discretized by cell-averages, so that we are able to localize the position of the discontinuity up to the accuracy needed.

In all the experiments presented in this section we will use the following function discretized by cell-averages,
\begin{equation}\label{exp3}
f(x)=\left\{\begin{array}{ll}
10+\left(x-\frac{\pi}{6}\right)\left(x-\frac{\pi}{6}-10\right)+x^2+\sin(10x), & \textrm{if } x< \frac{\pi}{6},\\
x^2+\sin(10x),& \textrm{if } x\ge \frac{\pi}{6},
\end{array}\right.
\end{equation}
with $x\in[0, 1]$. It is easy to check that for the primitive $F$ of $f$, the jump in the function at $x=\frac{\pi}{6}$ is $[F(x=\pi/6)]=0$, the jump in the first derivative is $[F'(x=\pi/6)]=-[f(x=\pi/6)]=-10$ and in the second derivative is $[F''(x=\pi/6)]=[f'(x=\pi/6)]=10$.

Figure \ref{exp_cell} shows the limit function obtained by the linear algorithm (left), the quasi-linear algorithm (center) and the RC algorithm (right). In order to obtain these graphs we have started from 20 initial cell-averages of the function in (\ref{exp3}). When discretizing the function in (\ref{exp3}) through the cell-averages, we have represented the data at the lowest resolution $\bar{f}^{0}_j$ %in (\ref{decimation})
at the positions $x_{j-1}^{0}+\frac{h_{0}}{2}$. Figure \ref{exp_cell} shows that the linear algorithm produces oscillations close to the discontinuity. The RC and the quasi-linear algorithms do not produce oscillations and attain a very good approximation close to the discontinuities. Figure \ref{exp_cell_zoom} shows a zoom around the discon\-ti\-nuity. It is important to mention here that the point attained in the middle of the jump by the quasi-linear method and the RC algorithm is not due to di\-ffu\-sion introduced by the algorithms, it is due to the kind of discretization used. Mind that, if the function presents a discontinuity in the interval $({x_{j-1}^k}, x^k_{j})$, then the discretization in (\ref{discretization}) will always return a cell value at some point in the middle of the jump, that can be observed in the graphs of Figures \ref{exp_cell} and \ref{exp_cell_zoom}. This value simply corresponds to the mean of the function in the interval that contains the discontinuity, i.e. the cell-average value in (\ref{discretization}).

%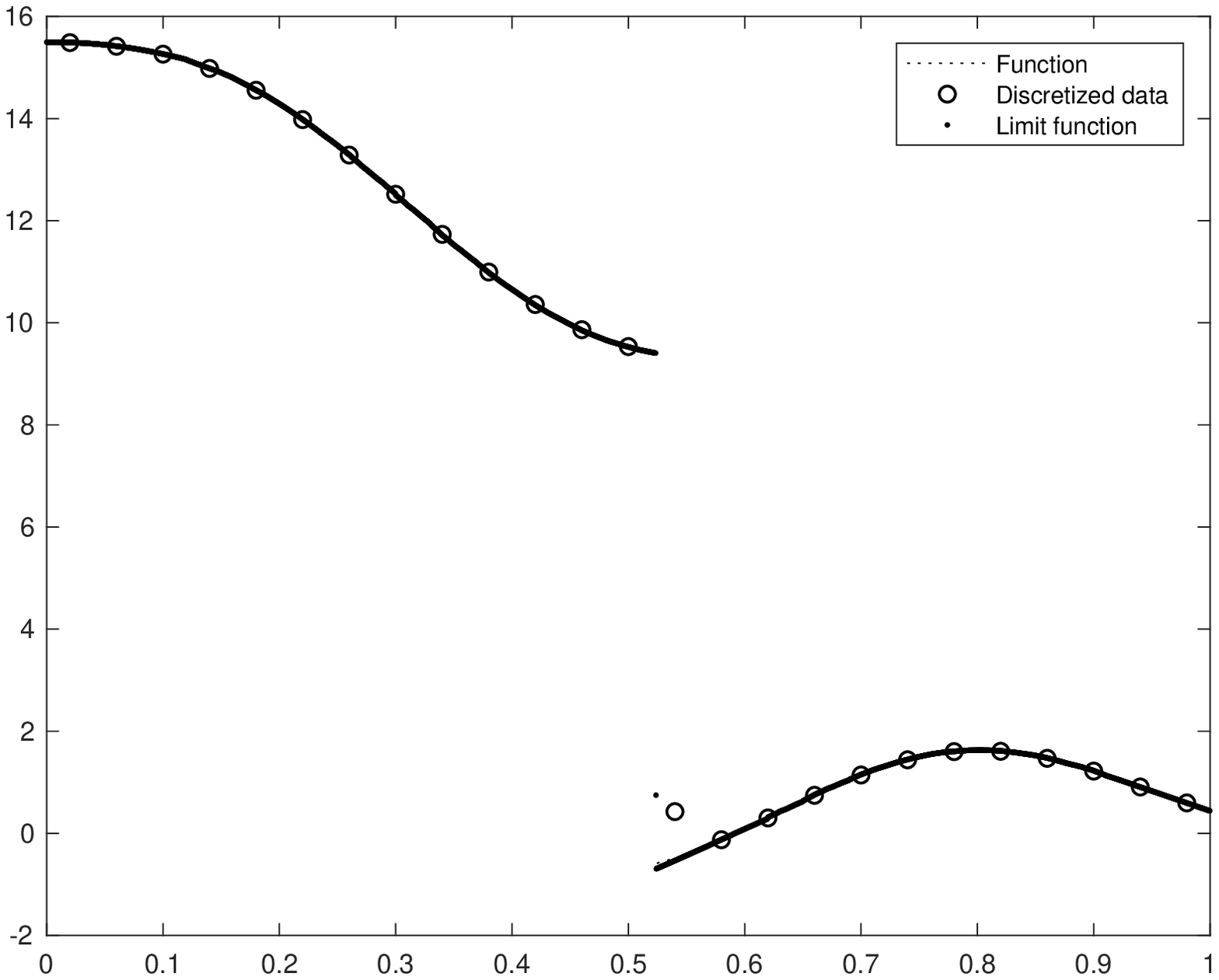		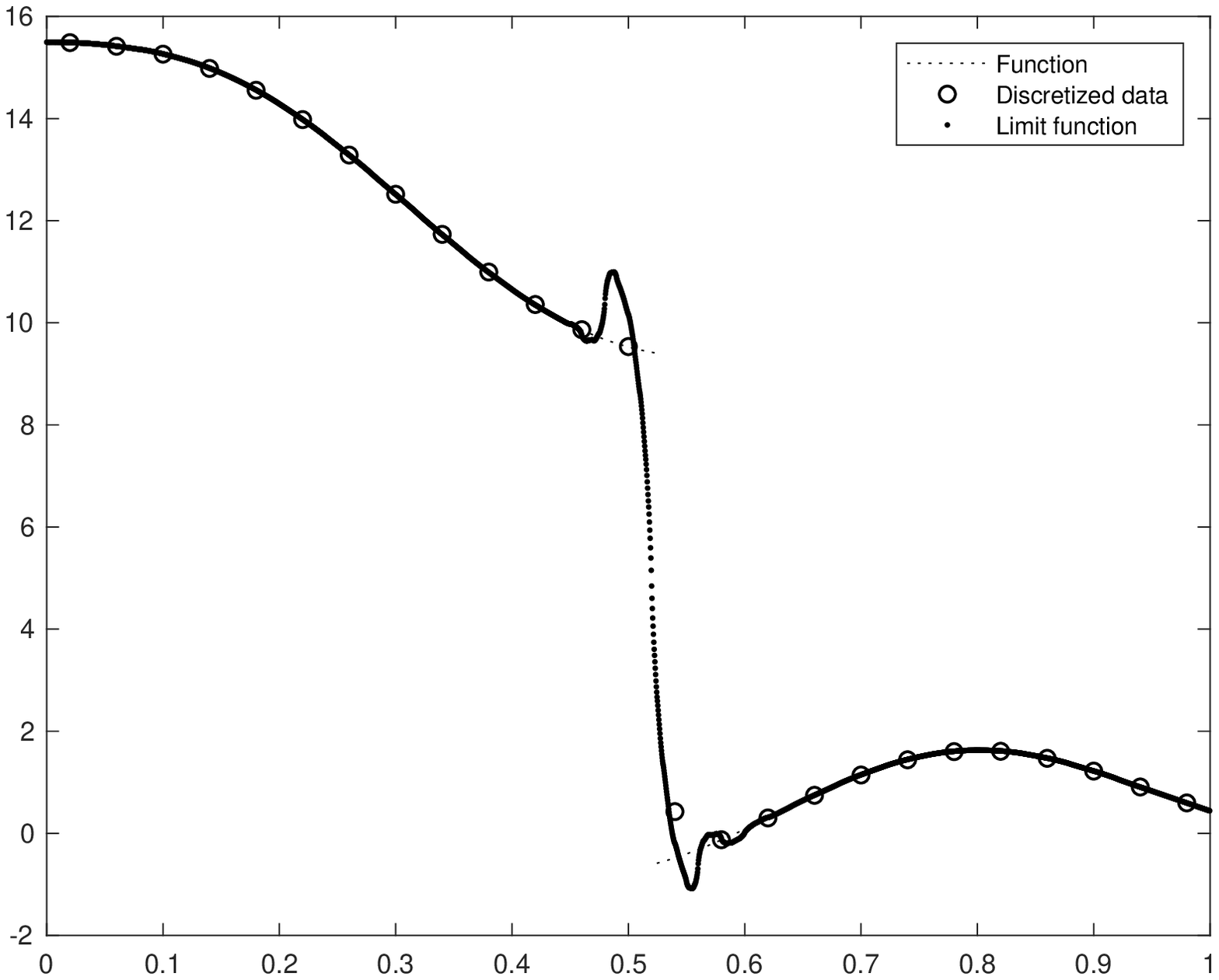		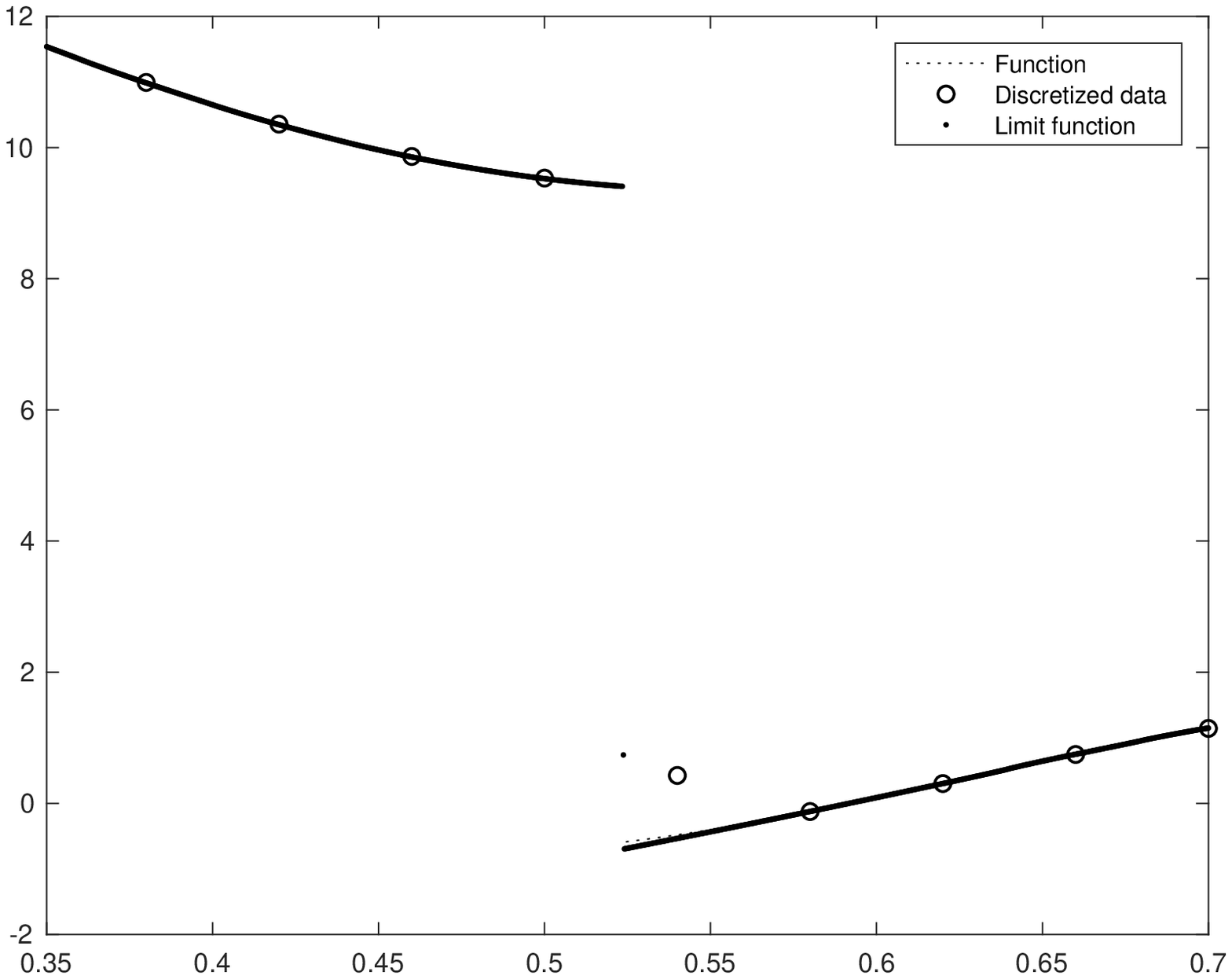
%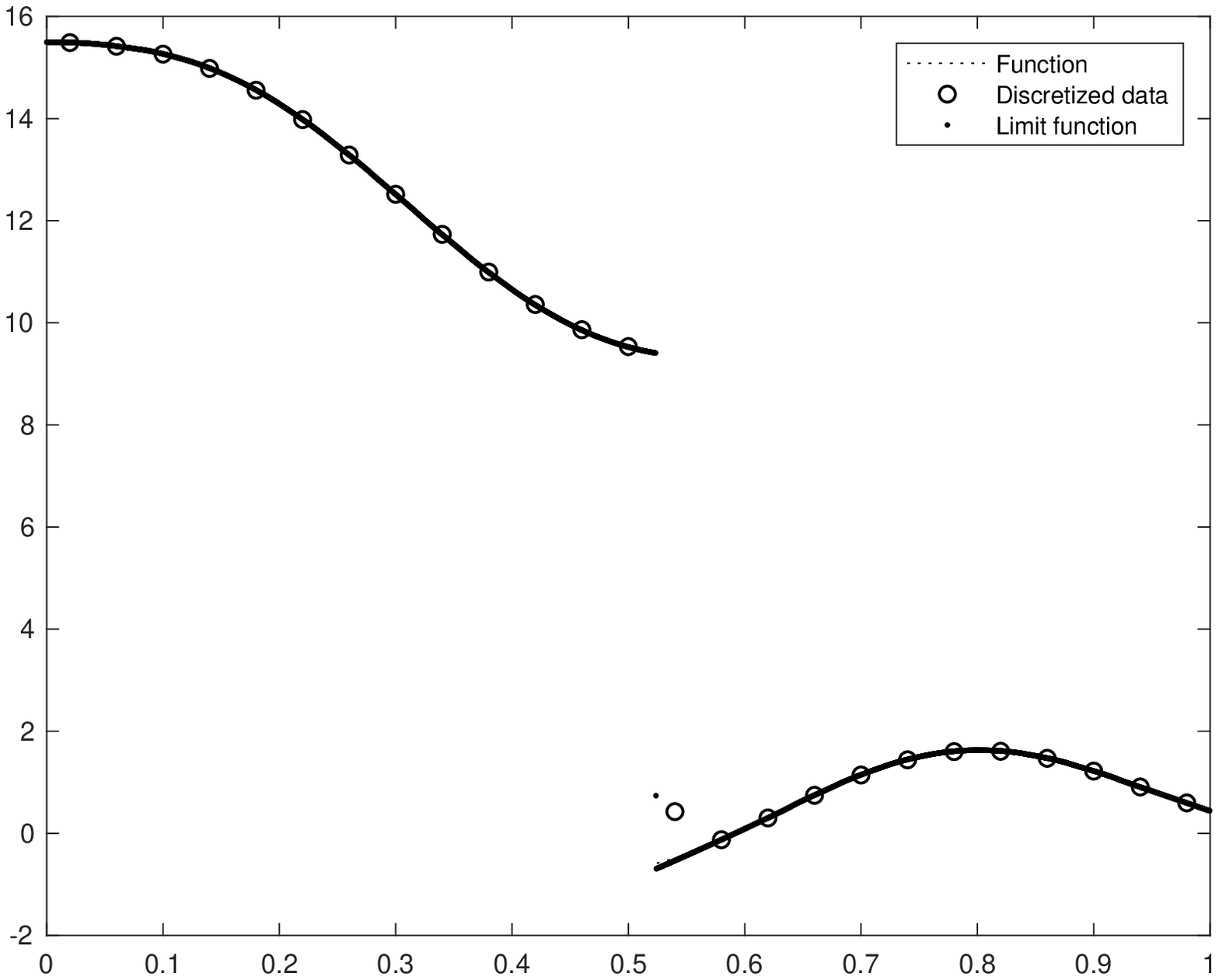		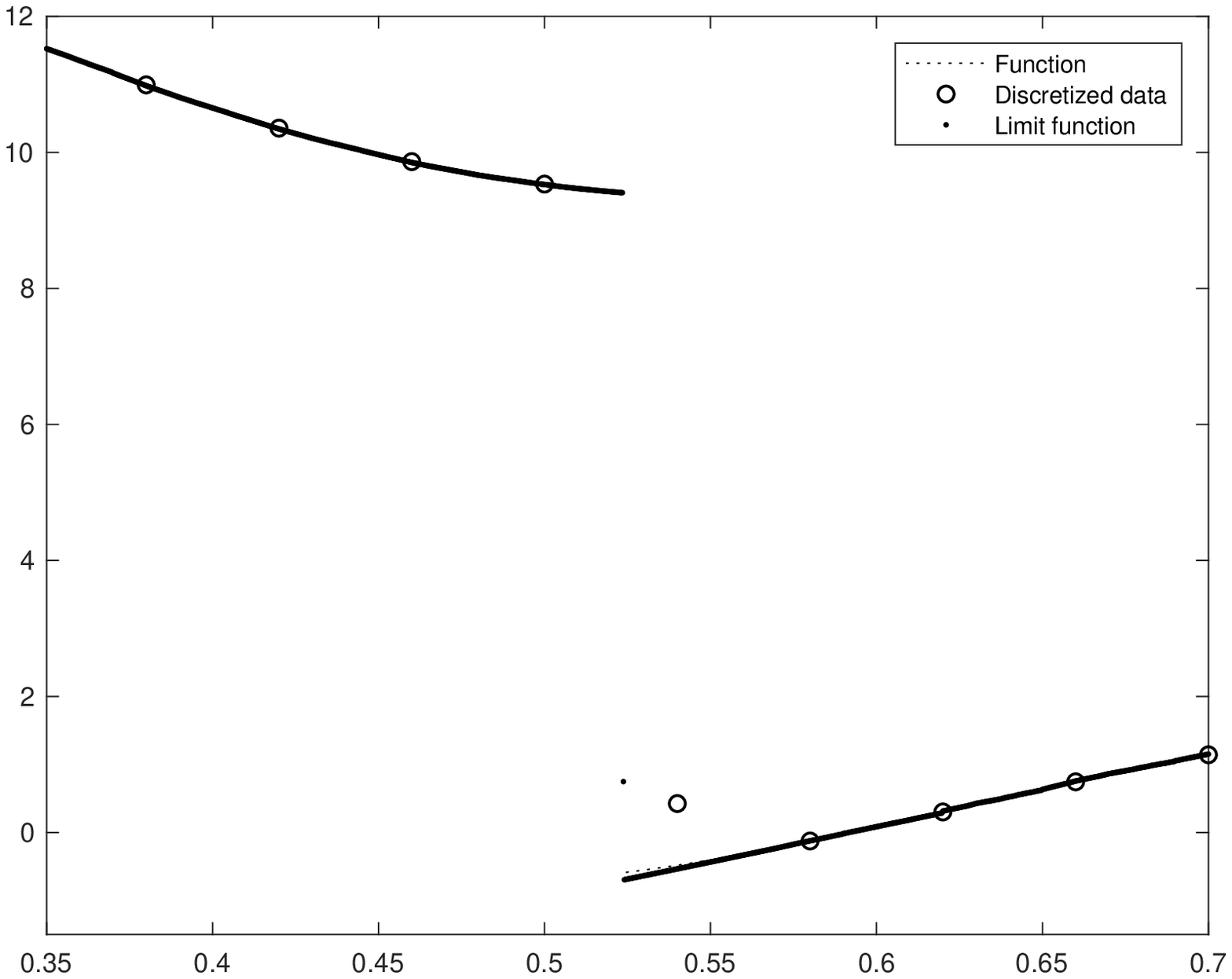	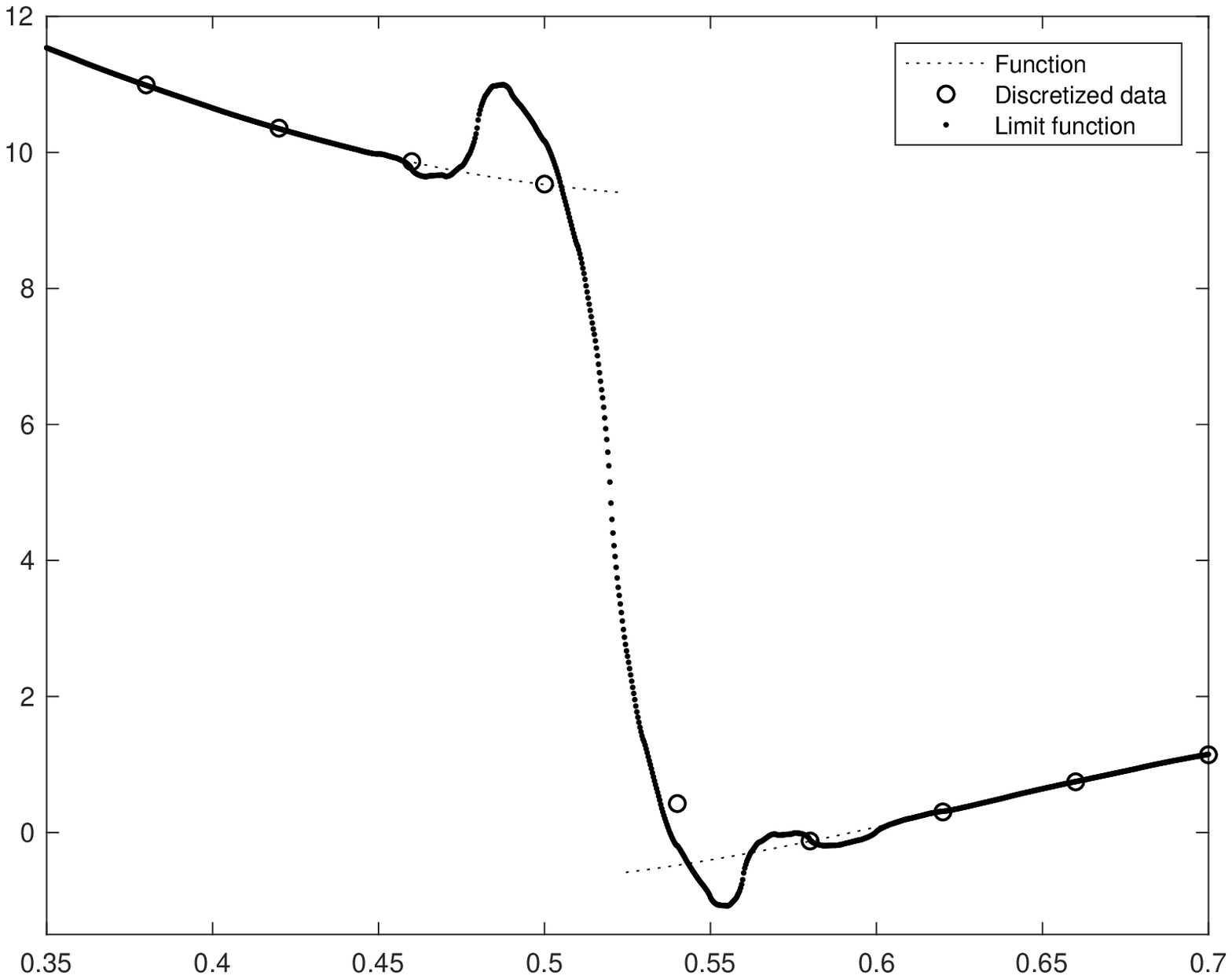
%Gr�ficas:
%/Users/juan/Documents/articulos_a_medias/Levin/regularidad_articulo_calculando_disc_y_saltos_cell.m
%Para ENO y lin modificar l�neas: 157, 209, 142,
\begin{figure}[!ht]


\centerline{\psfig{figure=lineal_cell.eps,height=4cm}\\
\psfig{figure=ENOSR_cell.eps,height=4cm}\\
\psfig{figure=levin_cell.eps,height=4cm}}
\caption{Limit function obtained by the linear algorithm (left), the quasi-linear algorithm (center) and the RC algorithm (right). In order to obtain these graphs, we have started from 20 initial cell-averages of the function in (\ref{exp1}).}\label{exp_cell}
\end{figure}

\begin{figure}[!ht]


\centerline{\psfig{figure=zoom_lineal_cell.eps,height=4cm}\\
\psfig{figure=zoom_ENOSR_cell.eps,height=4cm}\\
\psfig{figure=zoom_levin_cell.eps,height=4cm}}
\caption{Zoom of the limit functions shown in Figure \ref{exp_cell} obtained by the linear algorithm (left), the quasi-linear algorithm (center) and the RC algorithm (right).}\label{exp_cell_zoom}
\end{figure}

\subsection{Numerical Regularity in the cell-average case}
%Estos programas creo que no se utilizaron al final:
%Programas en:/Users/juan/Documents/articulos_a_medias/Levin/regularidad_articulo.m   (para lineal y nuevo. para lineal cambiar l�nea 34)
%Programas en:/Users/juan/Documents/articulos_a_medias/Levin/regularidad_ENOsr_articulo.m (con lineal=0;)

In this subsection we analyze the numerical regularity attained by each of the algorithms for data discretized by cell-averages. In Table \ref{table4ch2} we present some numerical estimations of the regularity constant of the different algorithms analyzed. The data has been obtained from the discretization through cell-averages of the function in (\ref{exp3}). Following what was done in the point-values discretization, in order to obtain this table we start from $100$ initial data cells and we subdivide from $L=5$ to $L=10$ levels of subdivision in order to obtain an approximation of the limit function. We measure the numerical regularity for $x<\frac{\pi}{6}$, assuring that the discontinuity is not contained in the data. From this table we can see that
the numerical estimate of the re\-gularity for the RC algorithm is close to the one obtained by
the linear scheme. The quasi-linear scheme seems to be less regular.\\

\begin{table}[ht!]
%Programa en: /Users/juan/Documents/articulos_a_medias/Levin/regularidad_articulo_calculando_disc_y_saltos_cell.m
%(para lineal y nuevo. Para lineal cambiar l�nea 157. Para ENO cambiar linea 172)

\begin{center}
\resizebox{13cm}{!}{
\begin{tabular}{|c|c|c|c|c|c|c|c|c|}
\hline
&$L$ &  5 & 6&  7   & 8  & 9 & 10 \\
\hline
\multirow{ 3}{*}{$\beta_1$}
&Linear&  0.2240  &   0.8134  &    0.8348  &   0.8518  &   0.8656         &  {\bf  0.8771} \\\cline{2-8}
&Quasi-linear& -8.1221  &   0.5087 &   -0.7584  &   0.1382  &   0.3238      &   {\bf   -9.3917e-03}\\\cline{2-8}
&RC&   0.9981  &   0.9991  &   0.9995 &    0.9998    & 0.9999            &   {\bf    0.9999 }\\\cline{2-8}\hline
%0.99651  &   0.99651  &   0.99652  &   0.99653    & 0.99654   &  0.99655
 \multirow{ 3}{*}{$\beta_2$}
&Linear& -0.7327  & 2.3471e-03 &   1.175e-03&  5.8784e-04 & 2.9401e-04    & {\bf  1.4704e-04} \\\cline{2-8}
&Quasi-linear& -12.2278  &  -0.4911  &    -1.7577   &  -0.8619  &  -0.67621          & {\bf  -1.0095}  \\\cline{2-8}
&RC&    0.2886 &    0.1270&    5.9099e-02  &  2.9719e-02  & 9.8887e-03      & {\bf   4.9282e-03}\\\cline{2-8}
%0.44048 &    0.17048 &   0.049667  &  0.013025  & 0.0032794 & 0.00082901
\hline
\end{tabular}
}
\end{center}
\caption{ Numerical estimation of the limit functions regularity $C^{\beta_1-}$ and $C^{1+\beta_2-}$ for the different schemes presented and the function in (\ref{exp3}) discretized by cell-averages.}
 \label{table4ch2}
\end{table}

\subsection{Grid refinement analysis for the cell-averages sampling}
In this section we reproduce the grid refinement analysis that we performed for the point-values' case, using the infinity norm, but we also use the $L^1$ norm. 
For point-values data at the poins $\{x_j=jN_k^{-1}\}_{j=0, \cdots, N_k}$, we estimate $E^k_1$ using the cell-average data of the exact test function and its approximation on a mesh refined by a factor of $2^{-10}$.

In this case we have used the function presented in (\ref{exp3}) discretized by cell-averages. Table \ref{precision_cell} presents the errors and orders of approximation obtained by the three algorithms in the infinity norm. For our approach we show the error outside the interval $[x^*, s^*]$, in order to check the results esta\-blished in Theorem \ref{teocell}. For the linear and quasi-linear subdivision schemes we present the infinity norm in the whole domain. We can see how the linear algorithm losses the accuracy close to the discontinuity, while the quasi-linear algorithm and the RC approach keep high order of accuracy. In particular, the RC algorithm attains $O(h^3)$ accuracy, that is in accordance with the results of Theorem \ref{teocell}. We can also perform the same grid refinement analysis but using the $l^1$ norm instead. The results are presented in Table \ref{precision_cell_l1}. We can see in this table that we attain the order of accuracy expected. Mind that the accuracy has been reduced by one for all the algorithms as we are using a subdivision algorithm with a stencil of three cells. The RC approach attains the same order of accuracy as the quasi-linear algorithm.

%Programa nuevo en: /Users/juan/Documents/articulos_a_medias/Levin/saltos_point_discontinuidad_en_el_medio/ref_malla_art_1D_calc_disc_saltos_cell_l10_prim_analitic.m
%Programas lineal y ENOSR en: /Users/juan/Documents/articulos_a_medias/Levin/refinamiento_malla_art_1D_disc_y_saltos_cell_lin_nolin.m
%El siguiente programa finalmente no lo usamos:
%Se puede hacer todo de golpe en norma 1 e infinito con:   /Users/juan/Documents/articulos_a_medias/Levin/refinamiento_malla_art_1D_cell_L1_linf_todos_juntos.m
\begin{table}[!ht]
\begin{center}
\resizebox{10cm}{!} {
\begin{tabular}{|c|c|c|c|c|c|c|c|c|c|c|c|c|c|c|}
\hline
&\multicolumn{2}{|c|}{RC algorithm}&\multicolumn{2}{|c|}{Linear}&\multicolumn{2}{|c|}{Quasi-linear}\\
\hline $N_k$ &$E^k_{\infty}$ & $order_k$&$E^k_{\infty}$ & $order_k$&$E^k_{\infty}$ & $order_k$
              \\
%\hline  32& 9.9378   &-&&-& %2.4952    &-&1.2650e-01  &-
%            \\
%\hline  32&   3.0356    & -&6.5025 &-& 3.1476 &- %&1.2174   &1.0353   & 2.3281e-03   &5.7639
%            \\
\hline  64& 1.2739e-02   &-& 5.1079 &0.3483 &6.0545e-01&2.3782    %&  3.6473& -1.5830  &5.5860e-04   &2.0593
            \\
\hline  128& 2.3556e-03&  2.4350 & 5.5371 & -0.1164& 2.9149e-01&  1.0546  % &    1.2195  & 1.5805   & 1.6506e-04  &1.7589
            \\
\hline  256&   5.9829e-04 &  1.9772   & 5.8782&  -0.0862&3.1212e-02&  3.2232    %   &  1.1961   & 0.0279   & 1.8509e-05  &3.1566
            \\
\hline  5012 & 6.5693e-05 &3.1870  &  6.2890&  -0.0975& 3.2997e-03& 3.2417    %& 1.1458& 0.0621&2.1746e-06 &3.0894
            \\
\hline  1024 & 7.3102e-06 &3.1678  &  6.5697&  {-0.0630} & 3.1356e-04&  {3.3955} % &1.0431&{\bf 0.1354}&2.6294e-07&{\bf 3.0479}
            \\
\hline 2048& {\bf 7.8325e-07}&  {\bf3.3820}&  {\bf6.0839}&{\bf 0.1108}&{\bf 1.7553e-05}& {\bf 4.1589}
\\
%Levin
% 7.8325e-07  7.5132e-08  1.8626e-08  3.7253e-08  7.4506e-08  1.4901e-07
%  3.2224  3.3820  2.0121  -1.0000  -1.0000  -1.0000
%ENO-SR
% 1.7553e-05  1.9342e-06  4.0326e-07  1.0245e-07  4.5076e-07
%  4.1589  3.1820  2.2619  1.9769  -2.1375
%Lineal
%  6.0839  6.0383  6.1229  5.9474  6.2838  '
% 0.1108  0.0108  -0.0201  0.0420  -0.0794  '

\hline
\end{tabular}
}
\caption{Grid refinement analysis after ten levels of subdivision in the $l^{\infty}$ norm for the function in (\ref{exp3}) discretized by cell-averages and for the three subdivision schemes presented.}\label{precision_cell}
\end{center}
\end{table}

\begin{table}[!ht]
%Programa nuevo en: /Users/juan/Documents/articulos_a_medias/Levin/saltos_point_discontinuidad_en_el_medio/ref_malla_art_1D_calc_disc_saltos_cell_l10_prim_analitic.m
%Programas lineal y ENOSR en: /Users/juan/Documents/articulos_a_medias/Levin/refinamiento_malla_art_1D_disc_y_saltos_cell_lin_nolin.m
\begin{center}
\resizebox{10cm}{!} {
\begin{tabular}{|c|c|c|c|c|c|c|c|c|c|c|c|c|c|c|}
\hline
&\multicolumn{2}{|c|}{RC algorithm}&\multicolumn{2}{|c|}{Linear}&\multicolumn{2}{|c|}{Quasi-linear}\\
\hline $N_k$ &$E^k_1$ & $order_k$&$E^k_1$ & $order_k$&$E^k_1$ & $order_k$
              \\
%\hline  32&  2.1845e-02&-&
%            \\
\hline  64&    1.2052e-03 &-&1.2716e-01  &-&2.0014e-03  &-
            \\
\hline  128& 1.4370e-04  &3.0681 &8.0777e-02 &0.6546  &2.7861e-04  &2.8447
            \\
\hline  256&1.9401e-05 & 2.8889  & 2.2254e-02  &1.8599&3.8808e-05  & 2.8438
            \\
\hline  512& 2.0882e-06  &3.2158  &1.0952e-02  &  1.0229 &4.7043e-06 &  3.0443
            \\
\hline  1024 &2.4270e-07  &3.1050  &5.5043e-03  & 0.9925& 5.8186e-07  &3.0152
            \\
\hline  2048 &2.9298e-08  &{\bf3.0503} &3.1853e-03 &  {\bf0.7891} &7.2468e-08 &{\bf3.0053}
            \\
%Levin
%3.7698e-09  2.2303e-09  4.3489e-09  8.5141e-09  1.7020e-08
% 2.9583  0.7573  -0.9634  -0.9692  -0.9993
%lineal
% 2.2632e-03  1.1445e-03  5.5924e-04  2.9216e-04  1.3217e-04
%  0.4931  0.9837  1.0332  0.9367  1.1443
%ENOSR
% 9.1473e-09  2.4692e-09  4.2406e-09  8.4718e-09  1.6941e-08
%2.9859  1.8893  -0.7802  -0.9984  -0.9998

\hline
\end{tabular}
}
\caption{Grid refinement analysis in the $L^1$ norm for the function in (\ref{exp3}) discretized by cell-averages and for the three approximation schemes.}\label{precision_cell_l1}
\end{center}
\end{table}

\subsection{Approximation of bivariate cell-averages' data}\label{2D}
%/Users/juan/Documents/articulos_a_medias/Levin/articulo_1D/2D/articulo_2D.m
In this case we have applied the algorithms analyzed in previous sections to two-dimensional data using a tensor product approach, i.e. we directly process one-dimensional data by rows and then by columns.

In this section we will work with the bivariate function discretized by cell-averages,
\begin{equation}\label{exp2D}
f(x)=\left\{\begin{array}{ll}
\cos(\pi x)  \cos(\pi y), & \textrm{if } 0\le x<0.5, 0\le y<0.5,\\
-\cos(\pi x)  \cos(\pi y)+2,& \textrm{if } 0.5< x\le1, 0\le y<0.5,\\
-\cos(\pi x)  \cos(\pi y)+2,& \textrm{if } 0< x\le0.5, 0.5\le y<1,\\
-\cos(\pi x)  \cos(\pi y)+4,& \textrm{if } 0.5< x\le1, 0.5\le y<1.
\end{array}\right.
\end{equation}
%In order to mimic the presence of a discontinuity in the cell-averages, we set a value at the middle of the jumps. In this case we have set a weighted average between the value to the left and to the right of the discontinuity. It does not really matter the value chosen at the middle of the jump.
Figure \ref{exp_2D} shows the result of one step of subdivision using the tensor product approach for the data presented in Figure \ref{exp_2D} top to the left. This means that we apply the one dimensional subdivision scheme by rows and then by columns. The result presented in Figure \ref{exp_2D} top to the right corresponds to the linear algorithm. We can observe that the effect of the discontinuity appears in the subdivided data in the form of diffusion and Gibbs effect. The result of the quasi-linear algorithm and the RC approach is presented in Figure \ref{exp_2D} bottom to the left and to the right respectively. We can see that both results are very similar. As shown in previous sections, the main difference is the regularity of the data close to the discontinuity, that is higher for the RC approach. In terms of accuracy, both algorithms perform similar.

\begin{figure}[!ht]
%/Users/juan/Documents/articulos_a_medias/Levin/articulo_1D/2D/articulo_2D.m
\centerline{\psfig{figure=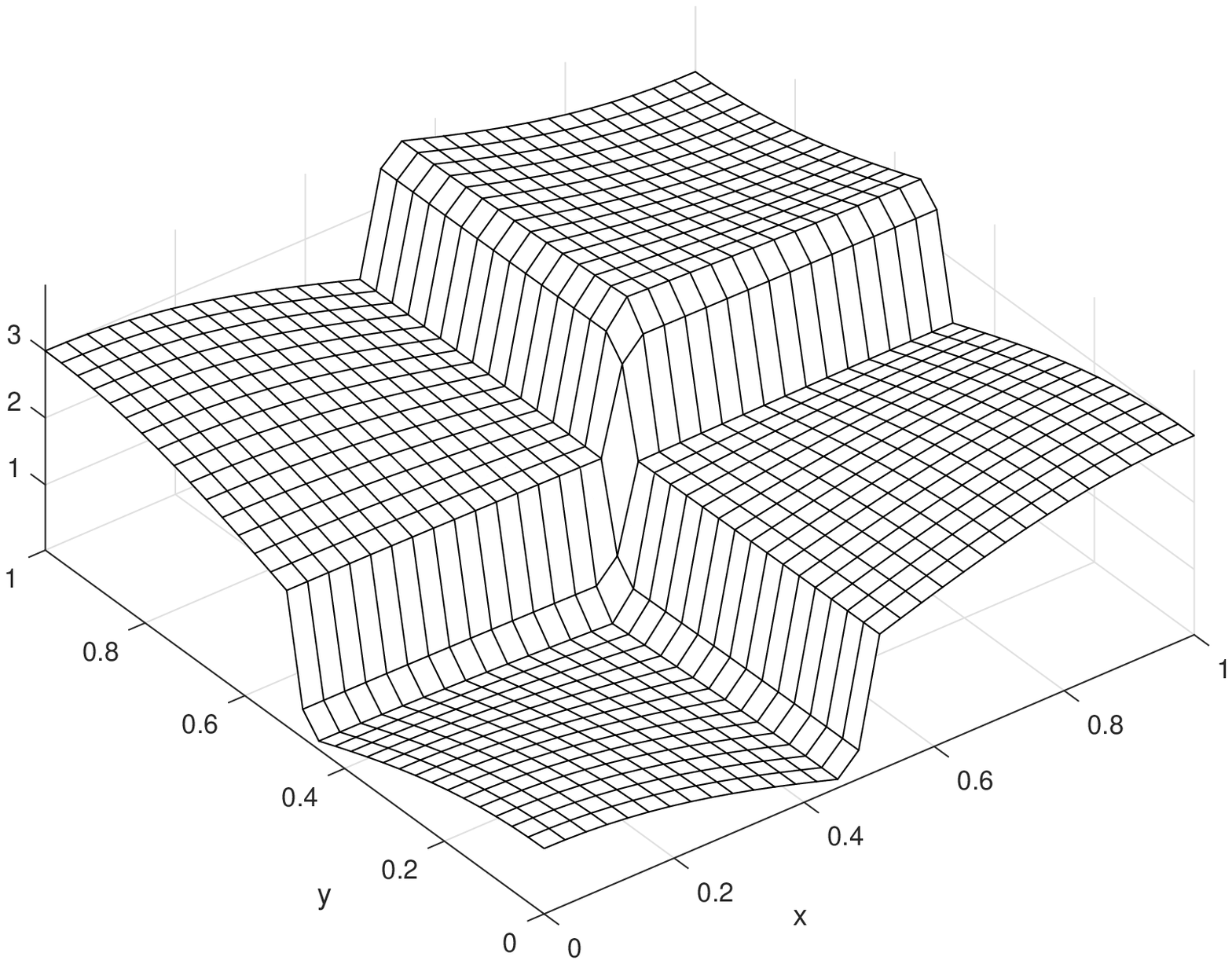,height=5cm}\\
\psfig{figure=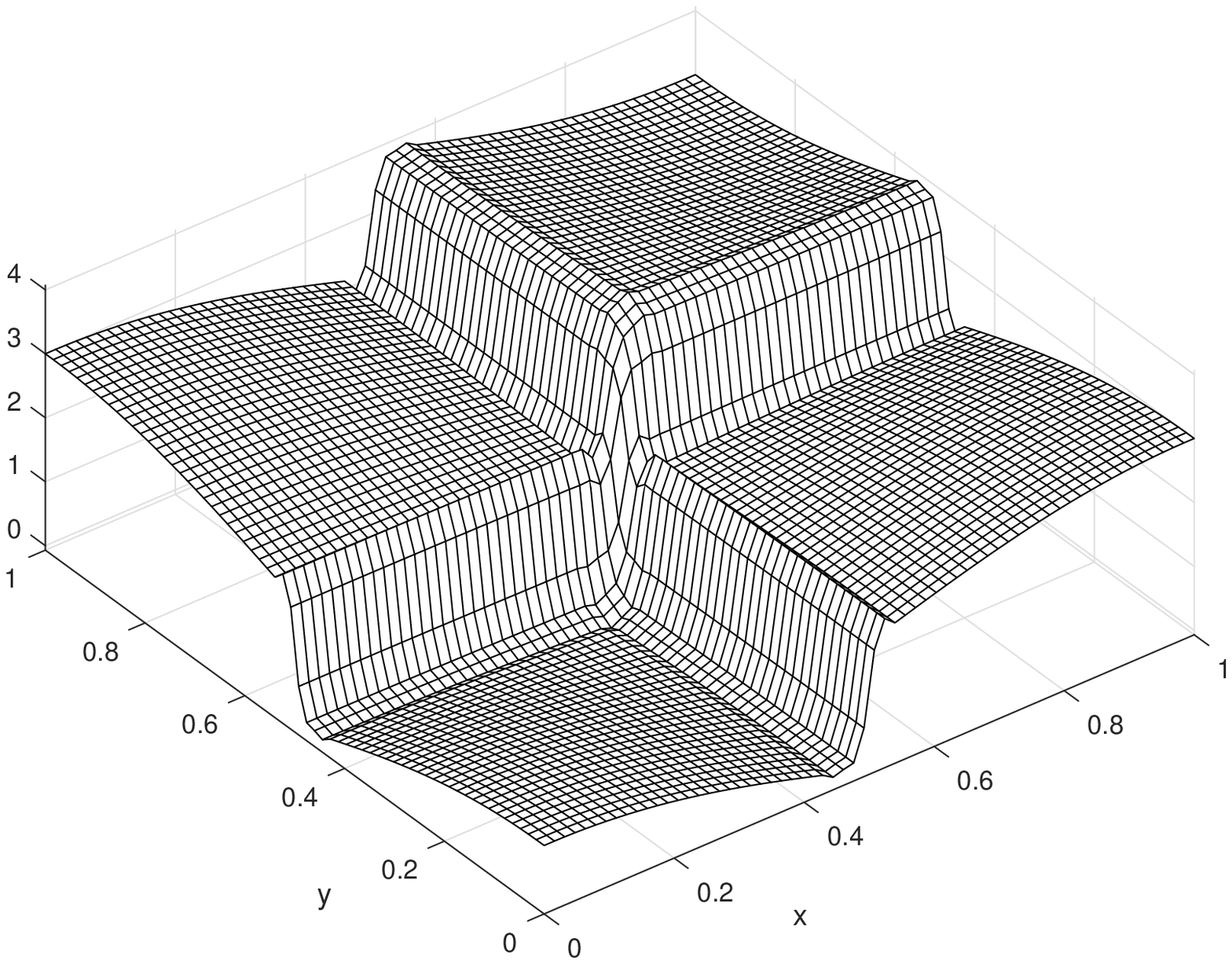,height=5cm}}
\centerline{\psfig{figure=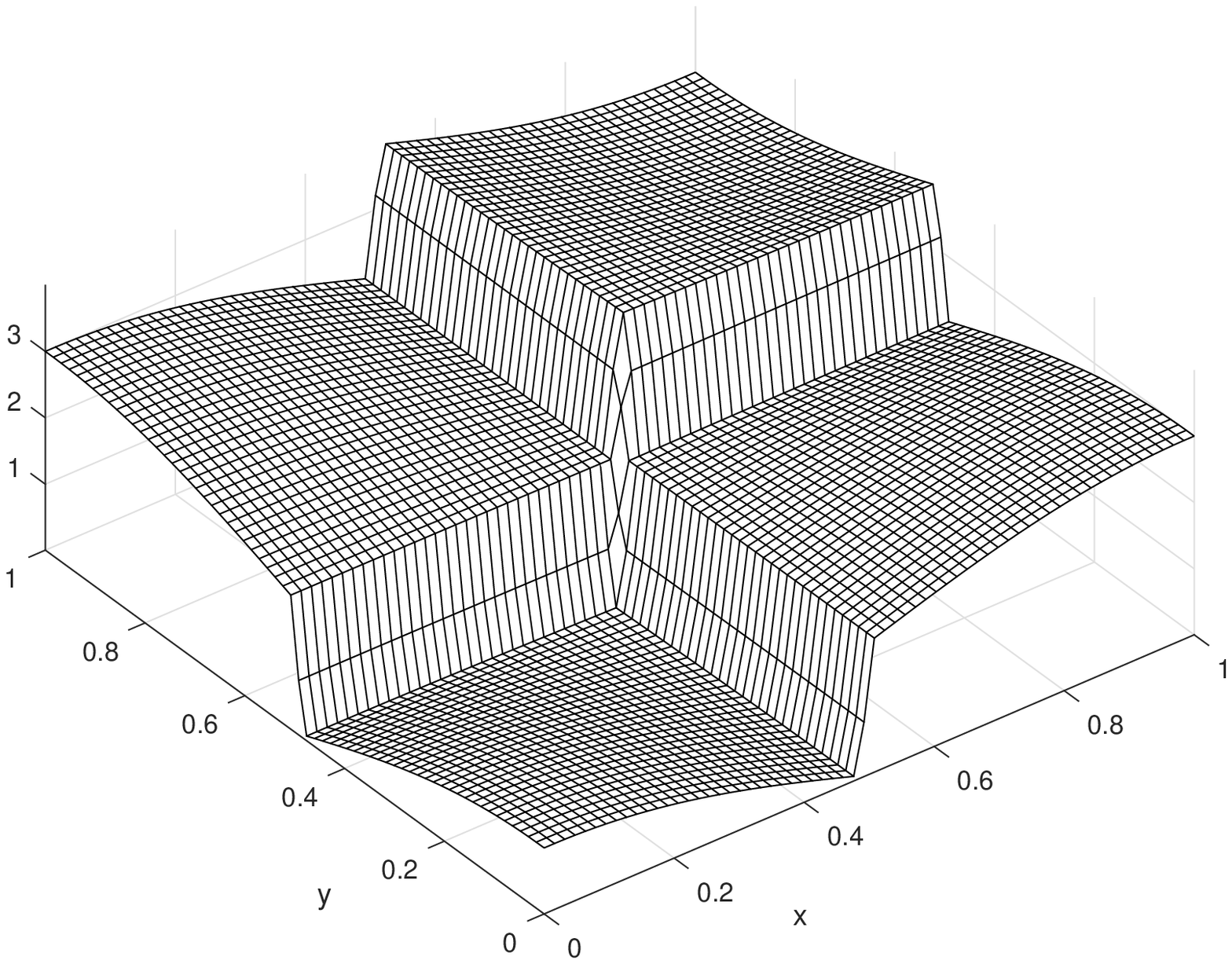,height=5cm}\\
\psfig{figure=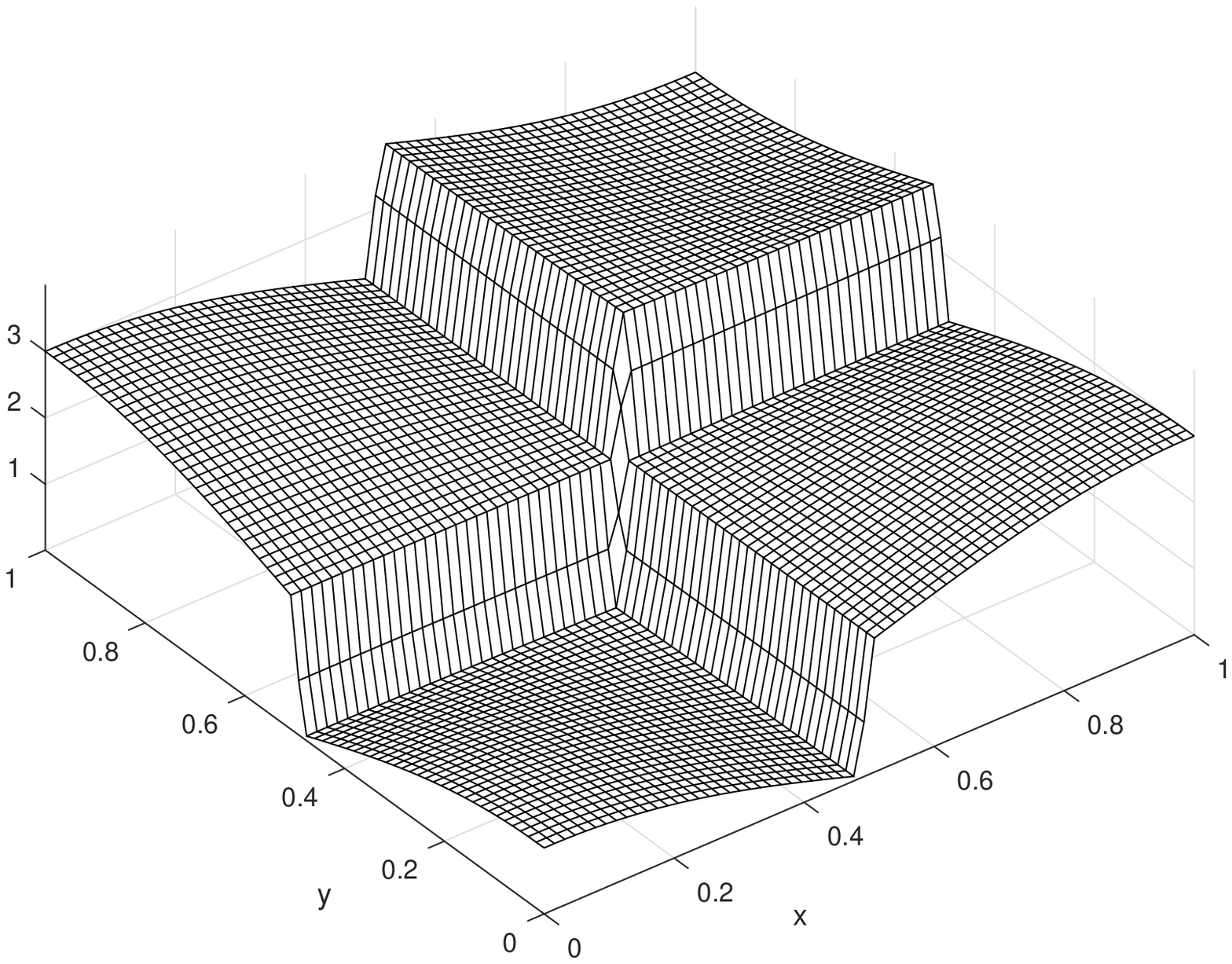,height=5cm}}
\caption{Top to the left, plot of the function in (\ref{exp2D_point}) discretized through cell-average values at a low resolution. Top to the right, subdivided data using the linear algorithm. Bottom to the left, subdivided data using the quasilinear algorithm. Bottom to the right, subdivided data using the RC algorithm.}\label{exp_2D}
\end{figure}
%\fi

\section{Conclusions}\label{conclusion}

In this paper, we have introduced a regularization-correction approach to the problem of approximating piecewise smooth functions. In the first stage, the data is smoothed by subtracting an appropriate non-smooth data sequence. Then a uniform linear 4-point subdivision approximation operator is applied to the smoothed data. Finally, an approximation with the proper singularity structure is restored by correcting the smooth approximation with the non-smooth element used in the first stage. We deal with both cases of point-value data and cell-average data.
The resulting approximations for functions with discontinuities have the following five important properties,

1. Interpolation.

2. High precision.

3. High piecewise regularity.

4. No diffusion.

5. No oscillations.

We have used the 4-point Dubuc-Deslauriers interpolatory subdivision scheme \cite{DD} through which we obtain a $C^{2-}$ piecewise regular limit function that is capable of reproducing piecewise cubic polynomials.

The first advantage of our approach is that the resultant scheme presents the same regularity as the regularity of the linear subdivision scheme used in the second stage of the algorithm.
The accuracy of the regularization-correction approach is obtained from the accuracy reached in the location of the discontinuities and in the accuracy of the approximation of the jump in the function and its derivatives, which is done through Taylor's expansions. Thus, the corrected algorithm presents the same regularity, at each regula\-rity zone, as the linear subdivision scheme \cite{DL, DLG, DGL} plus the same accuracy as the quasi-linear scheme \cite{ACDD}. We present the results for the 4-point li\-near subdivision scheme, but the approach is indeed applicable to any other subdivision scheme aimed to deal with the approximation of functions with singularities. By construction, the corrected subdivision algorithm does not present the Gibbs phenomenon and does not introduce diffusion. As far as we know, this is the first time that an algorithm that owns all these properties at the same time appears in the literature. The numerical results confirm our theoretical analysis.


\begin{thebibliography}{99}

\bibitem{ALR} Amat, S.; Li, Z.; Ruiz, J. On an new algorithm for function approximation with full accuracy in the presence of discontinuities based on the immersed interface method. J. Sci. Comput. 75 (2018), no. 3, 1500-1534.
\bibitem{PPH} Amat, S.; Liandrat, J. On the stability of PPH nonlinear multiresolution. Appl. Comput. Harmon. Anal. 18 (2005), no. 2, 198-206.
\bibitem{ALRT} Amat, S.; Liandrat, J.; Ruiz, J.; Trillo, J. C. On a power WENO scheme with improved accuracy near discontinuities. SIAM J. Sci. Comput. 39 (2017), no. 6, 2472-2507.
\bibitem{ART} Amat, S.; Ruiz, J.; Trillo, J. C. On an algorithm to adapt spline approximations to the presence of discontinuities. Numer. Algorithms 80 (2019), no. 3, 903-936.
%\bibitem{ADL} Amat, S., Dadourian, K., Liandrat, J.,
%Analysis of a class of nonlinear subdivision schemes and associated multiresolution transforms, {Adv. Comput. Math.} 34 (3) (2011) 253--277.
%\bibitem{ADL2} Amat, S., Dadourian, K., Liandrat, J., On a nonlinear subdivision scheme avoiding Gibbs oscillations and convergence towards $C^s$ functions with $s>1$. {Math. Comp.} {80} (274), (2010) 959--971.
%\bibitem{ADL3} Amat, S., Dadourian, K., Liandrat, J., On the convergence of various subdivision schemes using a perturbation theorem. {Curves and surfaces fitting: Avignon 2006, Nashboro Press}, (2006) 1--10.
\bibitem{AKJ} Amat, S.; Dadourian, K.; Liandrat, J. On a nonlinear 4-point ternary and interpolatory multiresolution scheme
eliminating the Gibbs phenomenon. {Int. J. Numer. Anal. Model.} {7} (2), (2010) 261--280.
\bibitem{AL} Amat, S.; Liandrat, J.
On the stability of the PPH nonlinear multireso\-lution, {\em
Appl. Comp. Harm. Anal.}, {18} (2), (2005) 198--206.
\bibitem{dioni} Amat, S.; Ruiz, J.; Trillo, J. C.; Y\'a\~nez, D. F.  Analysis of the Gibbs phenomenon in stationary subdivision schemes. {Appl. Math. Lett.}, 76, (2018) 157--163.
\bibitem{AL} Amir, A.; Levin, D. High order approximation to non-smooth multivariate functions. Comput. Aided Geom. Design 63 (2018), 31-65.
\bibitem{ACDD} Ar\`andiga, F.; Cohen, A.; Donat, R.; Dyn, N. Interpolation and approximation of piecewise smooth functions. SIAM J. Numer. Anal. 43 (2005), no. 1, 41-57.
\bibitem{Arandiga:2003:AII:641932.641950}
Ar\`{a}ndiga, F.; Donat, R.; Mulet, P. Adaptive interpolation of images, Signal
  Process. (83) (2003), no. 2, 459--464.
\bibitem{BL} Baccou, J.; Liandrat, J. Position-dependent Lagrange interpolating multirresolutions. International Journal of Wavelets, Multiresolution
and Information Processing
5 (2007) no. 4, 513-539.
\bibitem{BCR} Beccari, C.; Casciola, G.; Romani, L.
An interpolating 4-point $C\sp 2$ ternary non-stationary
subdivision scheme with tension control.  {Comput. Aided Geom.
Design}, { 24} (4) (2007) 210--219.
\bibitem{CDM} Cohen, A.; Dyn, N.; Matei B. Quasilinear subdivision schemes with applications to ENO interpolation. Applied and Computational Harmonic Analysis 15 (2003) no. 2,  89-116.
\bibitem{DL} Daubechies, I.; Lagarias, J. Two scale differences equations: I. Existence and global regularity of solutions, SIAM J. Math. Anal. 22 (1991) 1388-1410.
\bibitem{DD} Deslauriers, G.; Dubuc, S. Symmetric iterative interpolation processes. Constr. Approx 5 (1989), 49-68 .
\bibitem{dynlevin} Dyn, N.; Levin, D. Subdivision Schemes in Geometric Modelling. {Act. Num.} 11, (2002) 73--144.
\bibitem{DLG} Dyn, N.; Levin, D.; Gregory, J. A. A 4-point interpolatory subdivision scheme for curve design. Comput. Aided Geom. Design, (4) (1987), no. 4, 257-268.
\bibitem{DGL} Dyn, N.; Gregory, J.; Levin, D. Analysis of uniform binary subdivision schemes for curve design, Constr. Approx. 7 (1991)127-147.
\bibitem{Harten}Harten, Ami. ENO schemes with subcell resolution. Journal of Computational Physics 83.1 (1989): 148-184.
\bibitem{HARTEN-original} Harten, Ami Multiresolution representation of data: a general framework. SIAM J. Numer. Anal. 33 (1996), no. 3, 1205-1256. 
\bibitem{HIDS02} Hassan, M.F.; Ivrissimtzis, I.P.; Dodgson, N.A.  Sabin, M.A.;
An interpolating 4-point ternary stationary subdivision scheme,
{Comput. Aided Geom. Design}, { 19}, (2002) 1--18.
\bibitem{HD} Hassan, M.F.; Dodgson, N.A. Ternary and three-point univariate subdivision schemes, in: A. Cohen, J.-L. Merrien, L.L. Schumaker (Eds.),
{Curve and Surface Fitting: Sant-Malo 2002, Nashboro Press}, Brentwood, (2003) 199--208.
\bibitem{Jeon} Jeon, M.; Han, D.; Park, K.; Choi, G. Ternary univariate
curvature-preserving subdivision. {J. Appl. Math. Comput.},
{ 18} (1-2), (2005) 235--246.
\bibitem{KL} Kagan, Yael; Levin, D. High order reconstruction from cross-sections. Curves and surfaces, 289-303, Lecture Notes in Comput. Sci., 9213, Springer, Cham, 2015.
\bibitem{Kuijt98} Kuijt, F.  Convexity Preserving Interpolation: Nonlinear
Subdivision and Splines. PhD thesis, University of Twente, (1998).
\bibitem{KBG} Kwan, P.K.; Byung-Gook, L.; Gang, J.Y. A ternary 4-point approximating subdivision scheme. {App. Math. Comp.} {190}, (2007) 1563--1573.
\bibitem{LevinFourier} Levin, D. Reconstruction of Piecewise Smooth Multivariate Functions from Fourier Data. Axioms 9, no. 3 (2020): 88.
\bibitem{LL} Lipman, Y.; Levin, D. Approximating piecewise-smooth functions. IMA J. Numer. Anal. 30 (2010), no. 4, 1159-1183.
\bibitem{L} Levin, D. Between moving least-squares and moving least-$l_1$. BIT 55 (2015), no. 3, 781-796.
\bibitem{FM} Floater, M.S.;  Michelli, C.A. {Nonlinear
 stationary subdivision}, {Approximation theory: in memory of A.K. Varna},
  {edt: Govil N.K, Mohapatra N., Nashed Z., Sharma A., Szabados J.}, (1998) 209--224
\bibitem{GMS} Guessab, A.; Moncayo, M.; Schmeisser, G.
A class of nonlinear four-point subdivision schemes. {Adv. Comput. Math.} 37(2), (2012) 151--190.
\bibitem{HO}  Harizanov, S.; Oswald, P.  Stability of nonlinear subdivision and multiscale transforms, {Constr. Approx.} 31(3), (2010) 359--393.
\bibitem{Os} Oswald, P.,
    Smoothness of Nonlinear Median-Interpolation Subdivision,
    {Adv. Comput. Math.}, {20}(4), (2004) 401--423.
\bibitem{Ruibin}Qu, R. A new approach to numerical differentiation and integration. Math. Comput. Model. 24.10 (1996): 55-68.
%\bibitem{SSR} Siddiqi, S., Salam, W., Rehan, K., Binary 3-point and 4-point non-stationary subdivision schemes using hyperbolic function, {Appl. Math. Comput.} 258, (2015) 120--129.
\bibitem{SY} Siddiqi, S.; Younis, M. Construction of m-point binary approximating subdivision scheme, {Appl. Math. Lett.} 26, (2013) 337--343.
%\bibitem{SR} Siddiqi, S., Rehan, K., Improved binary four point subdivision scheme and new corner cutting scheme, {Comput. Math. Appl.} 59, (2010) 2647--2657.
\bibitem{SD} Sharon, N.; Dyn, N. Bivariate interpolation based on univariate subdivision schemes, {J. Approx. Theory}, 164, (2012) 709--730.
\bibitem{Wang} Wang, H.; Qin, K.  Improved ternary subdivision
interpolation scheme. {Tsinghua Sci. Technol.}, { 10} (1),
(2005) 128--132.
\bibitem{Zheng} Zheng, H.; Zhenglin,  Y.; Zuoping,  C.; Hongxing Z.;  A
controllable ternary interpolatory subdivision scheme. {Int.
J. CAD/CAM}, {5}, (2005) paper number 9.
\end{thebibliography}
\end{document}